\documentclass{article}

\usepackage[english]{babel}

\usepackage[letterpaper,top=4cm,bottom=4cm,left=3cm,right=3cm,marginparwidth=1.75cm]{geometry}

\usepackage{amsthm}
\usepackage{amsmath}
\usepackage{amssymb}

\usepackage{mathtools}
\usepackage{thmtools}

\usepackage{enumitem}

\usepackage{graphicx}
\usepackage[colorlinks=true, allcolors=blue]{hyperref}
\usepackage{tikz}
\usepackage{tikz-cd}

\usepackage{subcaption}

\usepackage{titlesec}

\titleformat{\paragraph}[block]{\normalfont\bfseries}{\theparagraph}{1em}{}
\titlespacing*{\paragraph}{0pt}{3ex plus 1ex minus .2ex}{1ex plus .2ex}
\newtheorem{theorem}{Theorem}[section]
\newtheorem{lemma}[theorem]{Lemma}
\newtheorem{corollary}[theorem]{Corollary}
\newtheorem{definition}[theorem]{Definition}
\newtheorem{proposition}[theorem]{Proposition}
\newtheorem{conjecture}[theorem]{Conjecture}

\newtheorem*{notation*}{Notation}

\tikzstyle{none}=[]
\pgfdeclarelayer{edgelayer}
\pgfdeclarelayer{nodelayer}
\pgfsetlayers{edgelayer,nodelayer,main}

\tikzstyle{new style 0}=[fill=white, draw=black, shape=circle]
\tikzstyle{new style 1}=[fill=red, draw=red, shape=circle, minimum size=2pt, inner sep=0pt]
\tikzstyle{new style 2}=[fill=black, draw=black, shape=circle, minimum size=2pt, inner sep=0pt]
\tikzstyle{new style 3}=[fill=blue, draw=blue, shape=circle, minimum size=2pt, inner sep=0pt]

\tikzstyle{new edge style 0}=[-, dashed]
\tikzstyle{new edge style 1}=[-, fill=none, draw=red]
\tikzstyle{new edge style 2}=[-, fill=none, draw=blue]
\tikzstyle{new edge style 3}=[-, dashed, fill=none, draw=blue]
\tikzstyle{new edge style 4}=[-, dashed, fill=none, draw=red]
\tikzstyle{new edge style 5}=[->]
\tikzstyle{new edge style 6}=[->, dashed]
\tikzstyle{new edge style 7}=[->, dashed, draw=red]
\tikzstyle{new edge style 8}=[->, draw=red]
\tikzstyle{new edge style 9}=[<-]

\title{Inscriptions of Isosceles Trapezoids in Jordan Curves}
\author{Adam Barber}

\begin{document}
\maketitle

\begin{abstract}
We construct a Lagrangian Floer homology whose chain complex is generically generated by the inscriptions of isosceles trapezoids in a smooth Jordan curve. This is an extension of Greene and Lobb's Jordan Floer homology \cite{Greene-Lobb:Floer-homology}, which we also call Jordan Floer homology. Its non-triviality re-establishes that every smooth Jordan curve inscribes every isosceles trapezoid. By consideration of the spectral invariants associated with the real filtration known as the action filtration, we establish new cases of non-smooth Jordan curves which admit inscriptions of isosceles trapezoids.
\end{abstract}

\section{Introduction}

We say that a Jordan curve, a simple closed curve $\gamma \subset \mathbb{R}^2$, \emph{inscribes} a square if there exist 4 points in $\gamma \subset \mathbb{R}^2$ that are the vertices of a square. The classical example of an inscription problem is the Square Peg Problem, which was first posed by Toeplitz \cite{Toeplitz:1911} in 1911 and asks whether every Jordan curve inscribes a square. 

\begin{conjecture}[Square Peg Problem]
    Every Jordan curve inscribes a square.
\end{conjecture}

\noindent This remains an open problem. More generally, we say that a Jordan curve $\gamma$ \emph{inscribes} a polygon $P$ if there exist 4 points in $\gamma$ that are the vertices of a polygon similar to $P$ (we say that two polygons are similar if there exists an orientation preserving similarity between them). We consider inscriptions of isosceles trapezoids (quadrilaterals whose vertices are the intersection set of two parallel lines and a circle) in Jordan curves. As a generalisation of the Square Peg Problem, it is also an open question whether every Jordan curve inscribes every isosceles trapezoid.

\begin{conjecture}[Isosceles Trapezoid Peg Problem]
    Every Jordan curve inscribes every isosceles trapezoid.
\end{conjecture}

In \cite{Greene-Lobb:Floer-homology}, Greene and Lobb developed a Lagrangian Floer homology associated with inscriptions of rectangles in smooth Jordan curves, called Jordan Floer homology. The goal of this paper is to define and study Jordan Floer homology in the setting of inscriptions of isosceles trapezoids in smooth Jordan curves. The non-triviality of this homology for smooth curves provides an alternative proof of the already established result:

\begin{theorem}[Greene-Lobb, 2020 \cite{Greene-Lobb:Cyclic-quadrilaterals}] \label{thm:smooth_isosceles_trpezoids}
    Every smooth Jordan curve inscribes every isosceles trapezoid.
\end{theorem}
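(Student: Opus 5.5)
We prove the statement by the symplectic route of Greene and Lobb. An isosceles trapezoid is inscribed in $\gamma$ exactly when there are two chords of $\gamma$ with the same midpoint-axis structure. Concretely, it suffices to find two pairs $\{x,y\}$ and $\{z,w\}$ on $\gamma$ that are related by a reflection, or equivalently by a rotation in the symmetric-square coordinates. We encode unordered pairs of points of $\mathbb{C}$ by the map
\[
\{x,y\}\mapsto \Bigl(\tfrac{x+y}{2},\ \tfrac{(x-y)^2}{4}\Bigr)\ \text{(suitably renormalised)},
\]
so that the symmetric square of $\gamma$ becomes a Lagrangian torus $T_\gamma\subset\mathbb{C}^2$ with its standard symplectic form, after adjusting along the diagonal to get a smooth embedded Lagrangian torus. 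Fix an isosceles trapezoid, determined up to similarity by an angle parameter $\phi$. We take the linear symplectomorphism $R_\phi$ of $\mathbb{C}^2$ that fixes the ``midpoint'' coordinate, up to the reflection swapping parallel sides, and rotates the ``difference'' coordinate by $\phi$. Intersections $T_\gamma\cap R_\phi(T_\gamma)$ away from the diagonal circle then correspond exactly to inscribed similar trapezoids, while intersections on the diagonal are degenerate.

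Because every trapezoid is a similarity image of a standard one, the first step is to check that $R_\phi$ is indeed symplectic (in fact unitary). This makes $R_\phi(T_\gamma)$ a Lagrangian torus Hamiltonian isotopic to $T_\gamma$, via the path $R_{t\phi}$. The second step is a perturbation: we move the curve by a $C^\infty$-small generic isotopy so that the intersection is transverse, and we check that transversality is achievable at the diagonal circle in a controlled way. At the diagonal circle the clean intersection of $T_\gamma$ with $R_\phi(T_\gamma)$ is a circle, and a Morse–Bott perturbation contributes a known number of generators.

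The third step is to define $HF(T_\gamma,R_\phi(T_\gamma))$. The tori are monotone, or at least have controlled Maslov index, and are displaceable only in a way governed by action. Using the standard computation $HF(T,\psi(T))\cong HF(T,T)\cong H^*(T^2)$ for Hamiltonian $\psi$, which requires ruling out disk bubbling, we conclude that there are at least $4$ generators counted with grading. The diagonal circle accounts for only $2$ of these after perturbation, so genuine off-diagonal intersections must exist, and these give inscribed trapezoids for nearby curves. A compactness argument, with the trapezoid size bounded below for smooth $\gamma$ by a local analysis near the diagonal, then passes back to $\gamma$ itself.

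The main obstacle is bubbling and the well-definedness of Floer homology for $T_\gamma$, which need not be monotone. Here we would follow Greene–Lobb and rescale the difference coordinate so that $T_\gamma$ becomes monotone: the area class $\omega$ is proportional to the Maslov class, with minimal Maslov number $2$. Disk bubbles then cancel in $\partial^2$, and we verify that the count of Maslov-$2$ disks is the same on both sides. The other delicate point is showing that the diagonal contributions cannot exhaust the homology, for which we would use the grading or the action filtration.
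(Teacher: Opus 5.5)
There are two genuine gaps. The first is that your encoding only detects rectangles. Isosceles trapezoids up to similarity form a two-parameter family $T_{r,\theta}$, not a one-parameter family. A map that fixes the midpoint $(x+y)/2$ and rotates the (squared) difference relates two equal chords that bisect each other, i.e.\ the diagonals of a rectangle, which is the case $r=1/2$. For $r\neq 1/2$ the diagonals meet at $(1-r)z+rw$, which is not symmetric in $z,w$. The map $G_{r,\theta}=F_r^{-1}\circ R_\theta\circ F_r$ therefore does not commute with $(z,w)\mapsto(w,z)$: conjugating by the swap replaces $r$ by $1-r$. So nothing descends to the symmetric square, and you must work, as the paper does, with the ordered torus $\gamma\times\gamma$ and the weighted form $\omega=(1-r)\,dx_1\wedge dy_1+r\,dx_2\wedge dy_2$. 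Separately, the image of the symmetric square is a M\"obius band with boundary $\gamma\times\{0\}$, not a torus, and no adjustment along the diagonal turns it into one. You also cannot rescale to monotonicity. A constant $2$-form that is invariant under $G_{r,\theta}$ and makes every product torus Lagrangian is a multiple of $\omega$. With that form, the Maslov-$2$ discs bounded by $\gamma\times\{w\}$ and $\{z\}\times\gamma$ have areas $(1-r)\operatorname{Area}(\gamma)$ and $r\operatorname{Area}(\gamma)$, which are unequal for $r\neq1/2$.

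The second gap is more serious and is present even for rectangles: the Floer input you invoke is false for closed Lagrangians in $\mathbb{C}^2$. The isomorphism $HF(T,\psi(T))\cong H^*(T^2)$ requires that $T$ bound no holomorphic discs, and Gromov's theorem excludes this in $\mathbb{C}^2$. When $T$ is monotone, $HF(T,T)$ is defined but vanishes, since $T$ is displaced by a translation. So there is no nonzero invariant forcing off-diagonal intersections, and the count ``at least $4$ generators, only $2$ from the diagonal'' never gets started.

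The missing idea is the paper's Jordan Floer homology. You discard the constant trajectories (the diagonal circle) as generators, use preferred cappings with zero winding number about $\Delta(\mathbb{C})$, and let the differential count only strips whose closure misses $\Delta(\mathbb{C})$. The real work is then showing $\partial^2=0$, which requires three things:
\begin{itemize}
\item limits of diagonal-avoiding strips cannot touch the diagonal, at interior points by the winding number together with positivity of intersections, and at boundary points by a Hopf-lemma argument for $\pi_d\circ u$;
\item they cannot break along a constant trajectory;
\item they cannot bubble, because a disc bubble must meet $\Delta(\mathbb{C})$ and bubbling at the diagonal is excluded by the same Hopf-lemma argument.
\end{itemize}
This last point is what replaces monotonicity. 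Continuation in $\theta$ and the Morse--Bott limit $\theta\to0$ then give $\operatorname{JF}_*(\gamma,r,\theta)\cong H_*(\gamma\times\gamma,\Delta(\gamma);\mathbb{F}_2)\neq 0$. Since every generator is a non-constant trajectory, non-vanishing directly yields an inscribed trapezoid in $T_{r,\theta}$.
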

Their result was in fact somewhat stronger and established that every smooth Jordan curve inscribes every cyclic quadrilateral (quadrilaterals that inscribe in a circle).

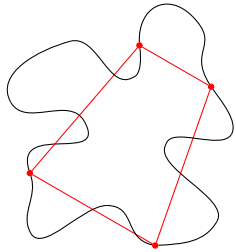
\begin{figure}[h]
    \centering

\begin{tikzpicture}[scale = 0.55, rotate=-30]
	\begin{pgfonlayer}{nodelayer}
		\node [style=new style 1] (1) at (1.75, -2) {};
		\node [style=new style 1] (2) at (-1.75, -2) {};
		\node [style=none] (3) at (-1.25, -0.25) {};
		\node [style=new style 1] (5) at (-1, 2) {};
		\node [style=new style 1] (6) at (1, 2) {};
		\node [style=none] (9) at (-1, -3.25) {};
		\node [style=none] (54) at (0.25, -1.75) {};
		\node [style=none] (55) at (2.75, -0.75) {};
		\node [style=none] (57) at (0.75, 0.25) {};
		\node [style=none] (58) at (2, 1.25) {};
		\node [style=none] (59) at (-0.25, 3.25) {};
		\node [style=none] (61) at (-0.75, 1.25) {};
		\node [style=none] (62) at (-2.5, 1.25) {};
		\node [style=none] (63) at (-3, -0.75) {};
	\end{pgfonlayer}
	\begin{pgfonlayer}{edgelayer}
		\draw [in=-30, out=135, looseness=1.75] (2) to (3.center);
		\draw [style=new edge style 1] (5) to (2);
		\draw [style=new edge style 1] (2) to (1);
		\draw [style=new edge style 1] (1) to (6);
		\draw [style=new edge style 1] (5) to (6);
		\draw [in=150, out=-45] (2) to (9.center);
		\draw [in=-150, out=-30, looseness=0.75] (9.center) to (54.center);
		\draw [in=-165, out=30, looseness=1.25] (54.center) to (1);
		\draw [in=-90, out=30] (1) to (55.center);
		\draw [in=-90, out=105, looseness=0.75] (57.center) to (58.center);
		\draw [in=330, out=90, looseness=0.75] (58.center) to (6);
		\draw [in=90, out=-90, looseness=0.75] (57.center) to (55.center);
		\draw [in=-15, out=150, looseness=1.50] (6) to (59.center);
		\draw [in=45, out=-60, looseness=0.75] (5) to (61.center);
		\draw [in=45, out=-120] (61.center) to (62.center);
		\draw [in=120, out=165, looseness=1.50] (59.center) to (5);
		\draw [in=150, out=-135] (62.center) to (63.center);
		\draw [in=150, out=-30] (63.center) to (3.center);
	\end{pgfonlayer}
\end{tikzpicture}

    \caption{An example of an inscription of an isosceles trapezoid in a Jordan curve.}
    \label{fig:inscription}
    \vspace{-0.5em}
\end{figure}

An advantage of the Jordan Floer approach over the approach used in \cite{Greene-Lobb:Cyclic-quadrilaterals} to show that every smooth Jordan curve inscribes every isosceles trapezoid is the existence of spectral invariants, which are related to the areas of inscribed isosceles trapezoids. One way to consider inscriptions of an isosceles trapezoid $T$ in a non-smooth Jordan curve $\gamma$ is to consider inscriptions of $T$ in smooth Jordan curves $\gamma_n$ limiting to $\gamma$ and use this to conclude that $\gamma$ must inscribe $T$. However, this approach suffers from the issue of \emph{shrinkout}, which we will describe precisely in Section \ref{sec:shrinkout}. 

In the case of piecewise $C^1$ Jordan curves without cusps, the issue of shrinkout can be obstructed by elementary techniques. This was observed by Matschke \cite[Section 5.3]{Matschke:2021} and when combined with Theorem \ref{thm:smooth_isosceles_trpezoids} establishes the following result.
\begin{theorem}[Matschke, 2021 \cite{Matschke:2021}]
    Every piecewise $C^1$ Jordan curve without cusps inscribes every isosceles trapezoid.
\end{theorem}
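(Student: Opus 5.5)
The plan is to approximate and pass to the limit, using Theorem \ref{thm:smooth_isosceles_trpezoids} for the approximants and an elementary local argument to rule out shrinkout.

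\textbf{Setup.} Fix an isosceles trapezoid $T$ and a piecewise $C^1$ Jordan curve $\gamma$ without cusps. Choose smooth Jordan curves $\gamma_n \to \gamma$ in the $C^0$ sense, with two further properties:
\begin{itemize}[label={}, leftmargin=1em]
\item \emph{(a)} away from small neighbourhoods of the finitely many corners of $\gamma$, $\gamma_n$ agrees with $\gamma$ (or is $C^1$-close to it);
\item \emph{(b)} near each corner, $\gamma_n$ is a small smoothing whose tangent direction sweeps monotonically through the angle between the two one-sided tangents.
\end{itemize}
Such smoothings exist because every corner has a well-defined nonzero interior angle, which is exactly where the no-cusp hypothesis enters.

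\textbf{Passing to the limit.} By Theorem \ref{thm:smooth_isosceles_trpezoids}, each $\gamma_n$ inscribes a trapezoid $T_n$ similar to $T$. By compactness we may pass to a subsequence along which the vertices converge. The limit four-tuple lies on $\gamma$ and is either a genuine trapezoid similar to $T$ or a single point. In the first case we are done. It remains to exclude shrinkout, where $\operatorname{diam}(T_n)\to 0$ and the vertices collapse to a point $p\in\gamma$.

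\textbf{Excluding shrinkout.} I would first prove a lemma:
\begin{itemize}[label={}, leftmargin=1em]
\item \emph{Lemma.} If a $C^1$ arc has all its tangent directions within a cone of half-angle $\theta<\pi/2$ of a fixed direction, then it contains no four points forming a trapezoid similar to $T$. More precisely, it contains no four points on a common circle in cyclic order matching $T$.
\end{itemize}
The reason is that such an arc is a graph over a line, with slopes bounded by $\tan\theta$. Four concyclic points on a graph must appear along the graph in the same cyclic order as on the circle. Combined with the slope bound, the circle's chords would then have to be nearly parallel, which contradicts the fixed angles of $T$ once $\theta$ is small. I would also handle the angle bookkeeping for an actual trapezoid so that the precise cone width is determined by $T$.

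\textbf{Applying the lemma.} Now distinguish where $p$ lies.
\begin{itemize}[label={}, leftmargin=1em]
\item \emph{Smooth point.} Near a smooth point $p$ of $\gamma$, the curves $\gamma_n$ are locally graphs with tangent directions uniformly close to that of $\gamma$ at $p$. For large $n$, $T_n$ lies in such a neighbourhood, which contradicts the lemma.
\item \emph{Corner.} The difficulty is that the tangent directions of $\gamma_n$ near $p$ span the whole corner angle. The tool here is a scale-invariance argument. Rescale $T_n$ to unit size about its centre and pass to the limit of the rescaled curves. The rescaled $\gamma_n$ converge to either a straight line or a wedge with opening angle $\alpha\in(0,\pi)$; the latter is nondegenerate, again by no cusps, possibly with a smoothed tip. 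Then $T$ would have to inscribe in a straight line, or in a wedge, or in a smoothed wedge.
\end{itemize}
A line is ruled out immediately. For the wedge I would argue elementarily: four concyclic points on two rays from a vertex consist of two on each ray, by convexity. Such a configuration has the two chords on the rays, so it is a trapezoid only in a constrained way, with the pairs matched in a cyclic order incompatible with a nondegenerate similarity copy inscribed as a limit of the $T_n$.

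\textbf{Main obstacle.} I expect the corner case to be the main difficulty. The elementary claim that the wedge admits no scaled copy is not obviously true: a wedge can inscribe isosceles trapezoids with two vertices on each ray. So I would instead exploit the fact that, before the limit, the curves $\gamma_n$ are $C^1$ with the two rays meeting through a smooth bend. I would then show that the ordering of vertices along $\gamma_n$ forces the limiting configuration to have three vertices on one ray, which is impossible for concyclic points. If this route fails, the fallback is to choose the smoothings $\gamma_n$ more cleverly, for instance convex or concave bends with curvature chosen so the rescaled limits exclude $T$ at the corners.
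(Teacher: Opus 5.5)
Your overall strategy matches what the paper attributes to Matschke: approximate by smooth curves, apply Theorem~\ref{thm:smooth_isosceles_trpezoids}, then exclude shrinkout by elementary means. The smooth-point case is fine once the cone angle is chosen small in terms of $T$. The gap is in the corner case, but not where you locate it.

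Write $\beta_T\in[0,\pi)$ for the angle between the extended legs of $T$. A wedge of opening $\alpha\in(0,\pi)$ inscribes a copy of $T$ only if $\alpha=\beta_T$. In that case it inscribes a whole scaling family, with the legs on the rays and the vertices met along the curve in their cyclic order. So no ordering argument will force three vertices onto one ray. This regime is nevertheless harmless, because no vertex of such a configuration sits at the apex. If you take $\gamma_n=\gamma$ outside $\rho_n$-balls about the corners, and the blow-up of $T_n$ is an exact wedge (i.e.\ $\operatorname{diam}T_n/\rho_n\to\infty$), then for large $n$ all vertices of $T_n$ already lie on $\gamma$, and you are done.

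The real obstruction is the regime you list but never treat: $\operatorname{diam}T_n\asymp\rho_n$, where the blow-up limit is the smoothing profile $W$. Consider the smoothing you chose in (b), a monotone tangent sweep, i.e.\ a convex profile. There the four vertices of an inscribed isosceles trapezoid lie in convex position, the parallel sides are the inner and outer chords, and the leg directions lie in the range of tangent directions. Hence the legs meet at an angle $\ge\alpha$, with equality only when both legs lie on the straight rays. This settles corners with $\alpha>\beta_T$, and all corners when $T$ is a rectangle.

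At corners with $\alpha<\beta_T$, however, the profile really does inscribe $T$. For example, round the right angle $y=|x|$ by the unit circle centred at $(0,\sqrt2)$. The points $(\pm0.1,\sqrt2-\sqrt{0.99})$ on the arc and $(\pm v,v)$ with $v\approx0.855$ on the straight sides form an isosceles trapezoid whose legs meet at $120^\circ$ and whose bases have ratio $\approx0.12$. So for a square with corners rounded at radius $\rho_n$, the inscriptions supplied by Theorem~\ref{thm:smooth_isosceles_trpezoids} may be exactly these, shrinking into the corners. Nothing in your argument rules this out.

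Your fallback of ``choosing the smoothings more cleverly'' is therefore the entire missing content. It is not clear that any profile of a corner with $\alpha<\beta_T$ avoids copies of $T$. An additional idea is needed for corners sharper than the legs of $T$, and this is precisely the shrinkout obstruction the paper cites from \cite[Section 5.3]{Matschke:2021} rather than proving itself.
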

\noindent For broader classes of Jordan curves, such as rectifiable (finite length) or locally monotone (see Definition \ref{def:locally_monotone}) curves, this approach breaks down. In such cases, the spectral invariants provide an alternative approach to obstruct shrinkout, enabling us to deduce the existence of inscribed isosceles trapezoids for certain new classes of non-smooth Jordan curves.

The approach of using spectral invariants to obstruct shrinkout was introduced in \cite{Greene-Lobb:Floer-homology} in the case of rectangles. Analysis of the spectral invariants in the case of non-rectangular isosceles trapezoids is made harder due to less control over the limiting behaviour of the spectral invariants. To illuminate this issue, consider a sequence of inscribed isosceles trapezoids in a smooth Jordan curve $\gamma$ such that the angle between the diagonals of the isosceles trapezoid tends to $\pi$. Limiting to the case where all the inscribed isosceles trapezoids are rectangles, the limit is necessarily a \emph{binormal} (a line segment normal to $\gamma$ at both ends). However, without this restriction, the limit could also be a \emph{vertex} (a critical point of curvature of $\gamma$) or a \emph{quadrisecant} (a line segment that intersects $\gamma$ in at least 4 points). The potential existence of quadrisecants in this limit (see Figure \ref{fig:quadrasecant} for example) is what makes the isosceles trapezoid case harder.

An example of a class of curves for which we can control this limit is locally Lipschitz-graphical Jordan curves (see Definition \ref{def:graphically_Lipschitz}), which form a subset of rectifiable locally monotone Jordan curves and contains all piecewise $C^1$ Jordan curves without cusps. Isosceles trapezoids can be parameterised by two parameters: the angle between the diagonal $\theta$, and an aspect ratio $r$ characterising where the diagonals intersect (see Section \ref{sec:isosceles_trapezoids}). We now provide a sample result for this class of curves.
\begin{restatable}{mainthm}{theoremA} \label{thm:first_theorem}
    Let $\gamma$ be a locally Lipschitz-graphical Jordan curve bounding an area $\operatorname{Area}(\gamma)$ and of radius $\operatorname{Rad}(\gamma)$, then $\gamma$ inscribes an isosceles trapezoid of angle $\theta$ and aspect ratio $r$ for every
    \begin{equation}
        0 < \theta \ < \frac{\operatorname{Area}(\gamma)}{2(1-r) \operatorname{Rad}(\gamma)^2}. \nonumber
    \end{equation}
\end{restatable}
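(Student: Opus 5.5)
The plan is to approximate $\gamma$ by smooth Jordan curves, find inscribed trapezoids in each approximant using the Jordan Floer spectral invariants, and show they cannot shrink out in the limit. Fix $r$ and $\theta$ with $0<\theta<\operatorname{Area}(\gamma)/(2(1-r)\operatorname{Rad}(\gamma)^2)$.

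\textbf{Step 1: approximation.} Choose smooth Jordan curves $\gamma_n \to \gamma$ with $\operatorname{Area}(\gamma_n)\to\operatorname{Area}(\gamma)$ and $\operatorname{Rad}(\gamma_n)\to\operatorname{Rad}(\gamma)$. For a locally Lipschitz-graphical curve we can take $\gamma_n$ to be locally Lipschitz-graphical with uniformly bounded Lipschitz constants, for instance by mollifying each local graph and patching. By the strict inequality, the bound on $\theta$ then holds for $\gamma_n$ for all large $n$.

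\textbf{Step 2: an action bound on $\gamma_n$.} Jordan Floer homology of $\gamma_n$ is nontrivial, so the spectral invariant attached to the generator class is realised by an inscribed trapezoid $Q_n$ of angle $\theta$ and ratio $r$. The action of a generator is a signed area of the inscribed configuration. I would prove a two-sided bound on the spectral invariant: it is at least a positive multiple of $\operatorname{Area}(\gamma_n)$, minus a correction coming from the Hamiltonian isotopy in $\theta$. This correction should be controlled by $\theta(1-r)\operatorname{Rad}(\gamma_n)^2$, using that the rotation by $\theta$ and scaling by $r$ act on points in a disc of radius $\operatorname{Rad}(\gamma_n)$. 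The hypothesis is exactly what makes this lower bound strictly positive, uniformly in $n$.

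\textbf{Step 3: passing to the limit.} Since the action of $Q_n$ is bounded below by some $c>0$, the $Q_n$ cannot degenerate to a point. Their actions tend to $0$ if they do, because the action is continuous and vanishes at degenerate configurations. By compactness, a subsequence converges to a configuration on $\gamma$ with angle $\theta$ and ratio $r$. We must rule out a limit that is a nondegenerate collapse of the vertices into fewer distinct points, such as a segment or a quadrisecant-type limit.

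\textbf{Main obstacle.} The hardest part is the shrinkout and quadrisecant analysis of Step 3, together with making the bound of Step 2 sharp enough to give the constant $2(1-r)$. Shrinkout to a point is excluded by the positive action. For a partial collapse, such as two vertices merging, I would use the locally Lipschitz-graphical hypothesis. In a graph chart of bounded slope, two nearby points on $\gamma_n$ span a chord of controlled direction. Given the fixed angle $\theta>0$ and ratio $r$, a trapezoid with two merging vertices would then force the whole trapezoid to be small, and hence to have small action, contradicting the lower bound. With $\theta>0$ fixed the diagonals are not parallel, so the quadrisecant case should not arise in the limit. The nondegenerate limit is therefore an inscribed isosceles trapezoid in $\gamma$.
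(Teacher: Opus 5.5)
There is a genuine gap where you say shrinkout is ``excluded by the positive action''. The action of a shrinking sequence of inscribed trapezoids need not tend to $0$. The preferred capping has to avoid $\Delta(\mathbb{C})$, so its boundary path in $(\gamma_n\times\gamma_n)\setminus\Delta(\gamma_n)$ may be forced to run almost all the way around $\gamma_n$ in one or both coordinates. That picks up $-\operatorname{Area}(\gamma_n)$ with weight $1-r$ or $r$. The paper's shrinkout lemma shows that the limiting action of a shrinking sequence lies in $\{0,\ r\operatorname{Area}(\gamma),\ (1-r)\operatorname{Area}(\gamma)\}$ modulo $\operatorname{Area}(\gamma)$. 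For instance, elegantly inscribed trapezoids shrinking as $\theta\to\pi$ have action tending to $r\operatorname{Area}(\gamma)$. So you must trap $l_2(\gamma_n,r,\theta)$ in a window $[\varepsilon,\ r\operatorname{Area}(\gamma)-\varepsilon]$ uniformly in $n$; a one-sided lower bound is useless.

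Your Step~2 also points the wrong way. A bound $l_2\ge c\operatorname{Area}-C\theta\operatorname{Rad}^2$ with $c>0$ cannot hold, since $l_2(\gamma_n,r,\theta)\to 0$ as $\theta\to 0$. What is true is the upper bound $l_2(\gamma_n,r,\theta)\le 2r(1-r)\operatorname{Rad}(\gamma_n)^2\,\theta$. It follows from $l_2(\gamma_n,r,0)=0$ and the Lipschitz bound on $\theta\mapsto l_2$. The hypothesis on $\theta$ is exactly $2r(1-r)\operatorname{Rad}(\gamma)^2\theta<r\operatorname{Area}(\gamma)$, i.e.\ it keeps $l_2$ strictly below the forbidden value $r\operatorname{Area}(\gamma)$. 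That is where the constant $2(1-r)$ comes from.

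The lower side of the window is where the locally Lipschitz-graphical hypothesis is actually needed, and you spend it on a non-issue. At fixed $\theta\in(0,\pi)$ partial collapse cannot occur. If the limiting diagonal endpoints satisfy $z\neq w$, then $(z,w)$ and $G_{r,\theta}(z,w)$ give four distinct vertices, so total shrinkout is the only failure mode.

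The paper gets $l_2(\gamma_n,r,\theta)\ge\varepsilon_k>0$ for $\theta\ge\pi/2^k$ from the triangle inequality $l_2(\gamma_n,r,\pi/2^k)\ge l_2(\gamma_n,r,\pi)/2^k$ together with monotonicity. This requires $\lim_{\theta\to\pi}l_2(\gamma,r,\theta)\neq 0$, and proving that non-vanishing is the real content for Lipschitz-graphical curves. As $\theta\to\pi$ the invariant could a priori be realised near quadrisecants of arbitrarily small action. So quadrisecants matter for the bound at $\pi$, not for the limit at fixed $\theta$. Bounded slope forces every small inscribed trapezoid of angle near $\pi$ to be elegantly or almost-elegantly inscribed, with action tending to $r\operatorname{Area}$ or $(1-r)\operatorname{Area}$. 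Otherwise the diagonals are bounded below, and the lower bound on $\partial_\theta l_2$ gives $l_2(\gamma_n,r,\pi)$ a uniform positive lower bound.

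Without this subadditivity-plus-$\theta\to\pi$ argument your proof has no uniform lower bound on the action. Without the upper bound, it would not obstruct shrinkout even if it had one.
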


The final result of note is that of Asano and Ike \cite{Asano-Ike:2026}, who recently established that every rectifiable or locally monotone Jordan curve inscribes every rectangle. Their work was inspired by the techniques of Greene and Lobb \cite{Greene-Lobb:Floer-homology}, but used microlocal sheaf theory in place of Lagrangian Floer homology.
\begin{theorem}[Asano-Ike, 2026 \cite{Asano-Ike:2026}]
    Let $\gamma$ be a rectifiable or locally monotone Jordan curve, then $\gamma$ inscribes every rectangle.
\end{theorem}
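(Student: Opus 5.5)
The plan is to pass to the limit from smooth curves, using the rectangle version of Jordan Floer homology from \cite{Greene-Lobb:Floer-homology} together with its action filtration to rule out shrinkout.

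\textbf{Setup.} For a smooth Jordan curve $\gamma$ and an angle $\phi\in(0,\pi)$, consider the Lagrangian torus $L_\gamma=\gamma\times\gamma\subset\mathbb{C}^2$. Apply the linear symplectic map $(z,w)\mapsto\bigl(\tfrac{z+w}{2},\tfrac{z-w}{2}\bigr)$, followed by a rotation by $\phi$ in the second factor, to obtain a Lagrangian $L_\gamma^\phi$. Off the diagonal, the intersections $L_\gamma\cap L_\gamma^\phi$ correspond to inscribed rectangles whose diagonals meet at angle $\phi$.

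\textbf{Step 1: smooth curves.} Jordan Floer homology is nonzero, so there is a well-defined spectral invariant $c(\gamma,\phi)$. It is realised as the action of some intersection point. The action of an inscribed rectangle $Q$ is an explicit combination $a(\phi)\operatorname{Area}(Q)+k\operatorname{Area}(\gamma)$ with $k\in\mathbb{Z}$. The degenerate diagonal intersections, which are the would-be limits of shrinking rectangles, have action exactly $k\operatorname{Area}(\gamma)$.

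\textbf{Step 2: bounds that survive the limit.} I would prove three properties.
\begin{enumerate}[label=(\roman*)]
\item Monotonicity and continuity of $c(\gamma,\phi)$ in $\phi$.
\item Boundary behaviour: the invariant starts near $0$ as $\phi\to0$ and ends near $\operatorname{Area}(\gamma)$ as $\phi\to\pi$. This forces $c(\gamma,\phi)$ strictly between consecutive degenerate levels, with an explicit gap $\delta(\phi,\gamma)>0$.
\item Stability: $c$ is $C^0$-continuous in $\gamma$. Approximate a rectifiable or locally monotone $\gamma$ by smooth curves $\gamma_n$ whose enclosed areas and the relevant geometric quantities converge.
\end{enumerate}
Each $\gamma_n$ then inscribes a rectangle $Q_n$ of angle $\phi$ with $\operatorname{Area}(Q_n)$ bounded away from $0$ uniformly in $n$. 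By compactness the $Q_n$ converge to a rectangle inscribed in $\gamma$. Its area is positive, so it cannot degenerate to a point, and a nondegenerate limit of rectangles is a genuine rectangle.

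\textbf{Main obstacle: uniform control of shrinkout.} In the rectangle case, degenerate limits can only be binormals, which have known action, and points. The difficulty is showing that near-degenerate intersections on the $\gamma_n$, meaning tiny rectangles clustered near a non-smooth point, have action uniformly close to $k\operatorname{Area}(\gamma_n)$. Only then does the gap in (ii) exclude them.
\begin{itemize}
\item For rectifiable curves, I would first try an isoperimetric-type estimate. A small inscribed rectangle of diameter $\varepsilon$ spans an arc whose enclosed-area contribution is $O(\varepsilon\cdot\operatorname{length})$, and this tends to $0$.
\item For locally monotone curves, I would use the local graph structure to bound the same quantity directly.
\end{itemize}
If the Floer-theoretic continuity proves too weak for non-smooth limits, the fallback, following Asano--Ike, is to replace Floer homology by microlocal sheaves. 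Their persistence-type barcodes give the needed $C^0$-stability of the spectral invariants for free.
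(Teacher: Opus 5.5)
The paper does not prove this theorem. It quotes it from Asano--Ike and says only that their proof uses microlocal sheaves and relies on $\gamma\times\gamma$ being monotone, which holds exactly when $r=1/2$. Your outline is instead the Greene--Lobb spectral-invariant strategy that the paper runs for Theorem~B. As written, it only gives that strategy's restricted conclusion.

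The gap is in Step~2(ii). To pass to the limit you need $\delta\le c(\gamma_n,\phi)\le c(\gamma_n,\pi)-\delta$ with $\delta$ \emph{independent of $n$}. The reason is that the degenerate levels for shrinking rectangles (Lemma~\ref{lemma:shrinkout} at $r=1/2$, modulo $\operatorname{Area}(\gamma)$) are exactly the two endpoint values, $c(\cdot,0)=0$ and $c(\cdot,\pi)=\tfrac12\operatorname{Area}(\gamma)$. These are in the paper's normalisation; double everything in yours. Strict monotonicity gives such a $\delta_n(\phi)>0$ for each fixed smooth $\gamma_n$, but nothing in (i)--(iii) makes it uniform. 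The limit of $c(\gamma_n,\cdot)$ is only nondecreasing, and it may equal $\tfrac12\operatorname{Area}(\gamma)$ on a whole interval $[\phi_0,\pi]$, which is exactly what shrinkout of the realising rectangles would produce. Strict monotonicity cannot exclude this, since $\frac{d}{d\phi}c$ is proportional to the squared diagonal of the realising rectangle, and bounding that below is what you are trying to prove.

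What the available tools do give is the following.
\begin{itemize}
\item A uniform lower gap for every $\phi$, from the triangle inequality: $c(\gamma_n,\phi)\ge c(\gamma_n,\pi)/2^k$ for $\phi\ge\pi/2^k$.
\item A uniform upper gap only through the Lipschitz bound $c(\gamma_n,\phi)\le\tfrac12\operatorname{Rad}(\gamma_n)^2\phi$, which works only for $\phi<\operatorname{Area}(\gamma)/\operatorname{Rad}(\gamma)^2$. This is just the $r=1/2$ case of Theorem~B.
\end{itemize}
Using that angles $\phi$ and $\pi-\phi$ give the same rectangle only halves the range you must cover. That still fails for thin curves, where $\operatorname{Area}(\gamma)/\operatorname{Rad}(\gamma)^2$ is small. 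You need a genuinely new inequality that keeps $c(\gamma_n,\phi)$ uniformly below $\tfrac12\operatorname{Area}(\gamma)$ for $\phi$ near $\pi$. The paper indicates that the extra input in Asano--Ike rests on monotonicity, not on stability. Your item~(iii) and the sheaf fallback address only $C^0$-stability, and stability cannot close this gap: even exact convergence $c(\gamma_n,\phi)\to c(\gamma,\phi)$ allows the limit to sit on a degenerate level.

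Step~1 also contains a false claim. The action of an inscribed rectangle is not of the form $a(\phi)\operatorname{Area}(Q)+k\operatorname{Area}(\gamma)$. It includes the signed areas between the circular arcs traced by the trajectory and the arcs of $\gamma$ joining the vertices, which is the weighted double ice-cream area of Section~\ref{sec:elegant_action}. These areas are not determined by $Q$ and $\operatorname{Area}(\gamma)$, so you cannot read a lower bound on $\operatorname{Area}(Q_n)$ off the action.

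The correct mechanism is the one in your final paragraph, which matches the paper's Lemma~\ref{lemma:shrinkout}: inscriptions that shrink to a point have actions converging to the degenerate levels. In the rectifiable case this uses uniformly bounded lengths (Riesz--Privalov). In the locally monotone case it uses the graph structure, which mollification preserves. That part of your plan is sound; what is missing is the uniform separation from those levels at large angles.
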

\noindent However, their approach relies on the associated Lagrangians being monotone, a property that fails to hold in the setting of isosceles trapezoids. It is therefore not clear how their methods could be adapted to address the problem of inscribing isosceles trapezoids.

\subsection{Why isosceles trapezoids?}

To motivate our interest in inscriptions of isosceles trapezoids, as opposed to any other class of quadrilaterals, we consider the following result for cyclic quadrilaterals. 

\begin{theorem}[Greene-Lobb, 2020 \cite{Greene-Lobb:Cyclic-quadrilaterals}] \label{thm:cyclic-inscriptions}
    Every smooth Jordan curve inscribes every cyclic quadrilateral.
\end{theorem}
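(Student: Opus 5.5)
The plan is to follow Greene and Lobb's Lagrangian-intersection argument from \cite{Greene-Lobb:Cyclic-quadrilaterals}: encode inscribed copies of a given cyclic quadrilateral $Q$ as intersection points of two Lagrangian tori, and show that these tori must intersect.

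\textbf{Setup.} Identify $\mathbb{R}^2$ with $\mathbb{C}$, and write the vertices of $Q$ in cyclic order as $a,b,c,d$. An ordered 4-tuple on $\gamma$ is similar to $Q$ exactly when the two pairs $\{z_1,z_3\}$ and $\{z_2,z_4\}$ have prescribed relations between their chords. More precisely, in the coordinates
\[
(z,w)\mapsto\bigl(\tfrac{z+w}{2},\ \tfrac{z-w}{2}\bigr)
\]
on each pair, the conditions are:
\begin{itemize}[label={}]
\item the two pairs share a common point of the circle's configuration;
\item their difference vectors are related by a fixed rotation–dilation $\lambda = \rho e^{i\phi}$ determined by $Q$.
\end{itemize}
So I would define $L = \gamma\times\gamma\subset\mathbb{C}^2$, which is a Lagrangian torus for the split form $\omega\oplus\omega$. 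Then I would apply a linear complex map $A_Q$ of $\mathbb{C}^2$ depending on $Q$, and seek intersections of $L$ with $A_Q(L)$ away from the diagonal-type loci corresponding to degenerate quadrilaterals.

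For cyclic quadrilaterals, $A_Q$ can be chosen to be conformally symplectic, in fact a scaled unitary map. To arrange this, I would use the angle between the diagonals and where they cut each other. The diagonal parts then relate by a rotation, and the midpoint parts by a real scaling; a uniform rescaling of the second factor makes the whole map symplectic up to a constant. This conformality is exactly what the cyclic hypothesis buys. It then gives that $A_Q(L)$ is again Lagrangian for a rescaled form, and hence Lagrangian after normalisation.

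\textbf{Degenerate intersections.} The intersection $L\cap A_Q(L)$ always contains a degenerate circle coming from collapsed quadrilaterals, where all four points coincide. I would blow up or quotient along this locus, as Greene–Lobb do. Passing to $\mathbb{C}^2$ modulo swapping, this turns $L$ into a smooth Lagrangian Klein bottle or torus in a symplectic manifold, namely $\mathbb{C}\times$ a cone blown up. In that manifold the degenerate locus is cleanly removed, and it suffices to show the two resulting Lagrangians intersect.

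\textbf{Main obstacle: forcing the intersection.} I expect this to be the hardest step. For smooth $\gamma$, the Lagrangian $L$ is Hamiltonian isotopic, through Lagrangians built from a family of Jordan curves, to the one built from a round circle. For a circle, $Q$ inscribes, and one computes the intersection number of the two Lagrangians (mod 2, or the Euler characteristic in the Klein bottle setting) to be nonzero. If the intersection were empty for some $\gamma$, then after a small Hamiltonian displacement one would contradict an obstruction. Here I would try the Greene–Lobb route first: a displaceable Lagrangian Klein bottle or torus in $\mathbb{C}^2$ conflicts with area constraints, since the areas $\operatorname{Area}(\gamma)$ and $\rho^2\operatorname{Area}(\gamma)$ must match via Shevchishin–Nemirovski-type nonexistence of Lagrangian Klein bottles. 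An alternative would be to show that a Floer-type invariant for the pair is nonzero. Generic transversality then shows that the remaining intersection points are nondegenerate inscribed copies of $Q$, which completes the argument.
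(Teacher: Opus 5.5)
The paper does not prove this statement. It quotes it from Greene--Lobb, and its own Jordan Floer construction only recovers the isosceles-trapezoid case. Measured against the Greene--Lobb argument, your outline has a genuine gap. Its setup also imports features of the \emph{rectangle} proof that do not survive for general cyclic quadrilaterals.

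The diagonals of a cyclic $Q$ are cut in ratios $r:(1-r)$ and $s:(1-s)$, with $r(1-r)|z-w|^2=s(1-s)|z'-w'|^2$ by the power of the intersection point. So, after relabelling, $s=r$ exactly when $Q$ is an isosceles trapezoid. The relevant linear map is $A_Q=F_s^{-1}\circ R_\theta\circ F_r$, with $F_r$ as in Proposition~\ref{prop:rotation_map}. Here $R_\theta=\operatorname{diag}(1,e^{-i\theta})$ is unitary precisely because of the power-of-a-point relation. Your midpoint/half-difference coordinates only describe rectangles.

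One has $A_Q^*\omega_s=\omega_r$, where $\omega_r=F_r^*\omega_{\operatorname{std}}=(1-r)\,dx_1\wedge dy_1+r\,dx_2\wedge dy_2$. Hence $L=\gamma\times\gamma$ and $A_Q(L)$ are both $\omega_s$-Lagrangian; this, not conformality, is what the cyclic hypothesis buys. For $r\neq s$ the map $A_Q$ is not even conformally symplectic for any form $a\,\omega_1+b\,\omega_2$, so it is not a Hamiltonian diffeomorphism. That is why this paper's construction (where $G_{r,\theta}=\Phi^1_H$) stops at isosceles trapezoids, and why your fallback to ``a Floer-type invariant for the pair'' is not available. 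Likewise, the swap $(z,w)\mapsto(w,z)$ commutes with $A_Q$ only when $r=s=1/2$. Even for rectangles the quotient turns $(\gamma\times\gamma)\setminus\Delta(\gamma)$ into a Lagrangian M\"obius band; the closed Klein bottle only appears after gluing that band to its rotated copy.

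The decisive step --- forcing an off-diagonal intersection --- has no working mechanism. For the round circle, the inscribed copies of $Q$ form a whole circle of intersections (all rotations), so any mod-$2$ count is zero. Closed surfaces in $\mathbb{C}^2$ also have zero homological intersection, so there is nothing nonzero to carry along an isotopy of $\gamma$. The appeal to Nemirovski--Shevchishin through ``matching areas'' is not an argument either. Every compact Lagrangian in $\mathbb{C}^2$ is displaceable, and Lagrangian Klein bottles are excluded regardless of areas.

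The missing idea is a surgery obstruction. Suppose $L\cap A_Q(L)=\Delta(\gamma)$. This intersection is clean: at $(p,p)$, in $F_s$-coordinates, the two tori are to first order $e^{i\alpha}\mathbb{R}\times e^{i\alpha}\mathbb{R}$ and $e^{i\alpha}\mathbb{R}\times e^{i(\alpha-\theta)}\mathbb{R}$, where $e^{i\alpha}$ is tangent to $\gamma$ at $p$. So Lagrangian surgery along it yields a closed embedded $\omega_s$-Lagrangian surface, which is a torus foliated by push-offs of $\Delta(\gamma)$.

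A push-off has Maslov index $2+2=4$. A loop crossing both annuli $L\setminus\Delta(\gamma)$ and $A_Q(L)\setminus\Delta(\gamma)$ has Maslov index $0$ or $\pm4$, according to the smoothing. This holds because $A_Q$ is complex linear, so the Lagrangian angle of $A_Q(L)$ is that of $L$ shifted by a constant, and the rotations through the two surgery necks cancel. Thus the smoothed torus has minimal Maslov number $4$. This contradicts the theorem of Polterovich and Viterbo that every Lagrangian torus in $\mathbb{C}^2$ has minimal Maslov number $2$. That is the Greene--Lobb proof, and this Maslov computation is what your outline needs in place of an intersection count.
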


\noindent This theorem is optimal in two ways: any quadrilateral that is not cyclic does not inscribe in a circle (which is a smooth Jordan curve); and for any cyclic quadrilateral that is not an isosceles trapezoid there exists a non-smooth Jordan curve (in fact a triangle) in which it does not inscribe. This both motivates our interest in isosceles trapezoids, as the widest class of quadrilaterals for which the problem of inscription on general Jordan curves remains open, and demonstrates an obstruction to progress on the square peg problem: the issue of \emph{shrinkout}.

\subsubsection{The issue of shrinkout} \label{sec:shrinkout}

Suppose that $\gamma$ is a continuous Jordan curve in $\mathbb{C}$. The Riemann mapping theorem tells us that there exists a biholomorphic map $f: \mathbb{D} \rightarrow D$, where $\mathbb{D} = \{z \in \mathbb{C}:|z| < 1 \}$ and $D$ is the bounded region of $\mathbb{C} \setminus \gamma$. Carathéodory’s extension to the Riemann mapping theorem tells us that this map continuously extends to a homeomorphism $f:\overline{\mathbb{D}} \rightarrow \overline{D}$. The Jordan curve $\gamma$ is therefore the limit of a sequence of smooth Jordan curves $\gamma_n$, the images of concentric circles in $\mathbb{D}$ approaching the unit circle $\partial \mathbb{D}$ from the interior.

Fix a cyclic quadrilateral $Q$. From Theorem \ref{thm:cyclic-inscriptions}, we know that each smooth Jordan curve $\gamma_n$ inscribes any given cyclic quadrilateral, so we get a sequence of inscriptions $Q_n \subset \gamma_n$ where $Q_n$ is an inscription of $Q$ in $\gamma_n$. Compactness ensures that there exists a convergent subsequence $Q_{n_i}$ of inscriptions of $Q$. It is therefore tempting to conclude that $\gamma$ must inscribe $Q$. However, in the limit the inscription of $Q$ may shrink to a single point. From the discussion under Theorem \ref{thm:cyclic-inscriptions}, we know that this is necessarily the case when $Q$ is a cyclic quadrilateral that is not an isosceles trapezoid and the limiting curve $\gamma$ is a triangle that does not inscribe $Q$. We call this phenomenon \emph{shrinkout}.

To approach the issue of shrinkout, we use the same approach as \cite{Greene-Lobb:Floer-homology, Greene-Lobb:Squares-between-graphs}, that is, we use a version of Lagrangian Floer homology called Jordan Floer homology, which has an associated spectral invariant. We will show that this spectral invariant relates to an area associated with the inscribed isosceles trapezoids, so controlling the variation of this spectral invariant can be used in special cases to obstruct shrinkout.

\subsection{Inscriptions of isosceles trapezoids} \label{sec:isosceles_trapezoids}

Consider a Jordan curve $\gamma \subset \mathbb{R}^2 = \mathbb{C}$. We wish to determine whether a given isosceles trapezoid $T$ inscribes in $\gamma$. The set of isosceles trapezoids (up to similarity) can be parametrised by an aspect ratio $r \in (0,1/2]$ and an angle $\theta \in (0, \pi)$ (see Figure \ref{fig:cyclic_map}) and we denote the similarity class of isosceles trapezoids with parameters $r$ and $\theta$ by $T_{r, \theta}$. More precisely, a set $T \subset \mathbb{C}$ is a member of $T_{r,\theta}$ if $|T|=4$ and it contains the vertices of an isosceles trapezoid with aspect ratio $r$ and angle $\theta$. 

The parameters $r$ and $\theta$ characterise the way in which the diagonals of the isosceles trapezoids intersect, namely $\theta$ is the angle through which one must rotate one diagonal around the intersection point to obtain the other and $r$ represents the ratio of the lengths into which the intersection point splits each of the diagonals, as shown in Figure \ref{fig:cyclic_map}. For any similarity class $T_{r, \theta}$ and any pair of points $z,w \in \mathbb{C}$ with $z \neq w$, we define two new points $z', w' \in \mathbb{C}$ by rotating $z$ and $w$ clockwise around the point $(1-r)z + rw$ on the diagonal by an angle $\theta$. The set $T = \{z,z',w,w' \}$ therefore defines an isosceles trapezoid $T \in T_{r, \theta}$.   

\begin{notation*}
    We write $\Delta(\mathbb{C}) = \{(z,z):z \in \mathbb{C}\}$ and $\Delta(\gamma) = \{(z,z):z \in \gamma\}$ to denote the diagonals of the spaces $\mathbb{C}^2$ and $\gamma \times \gamma$ respectively. 
\end{notation*}

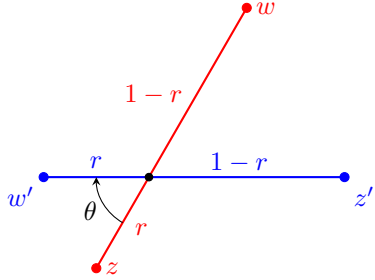
\begin{figure}[h]
    \centering

    \begin{tikzpicture}[scale=2, >=stealth]

    \coordinate (A) at (-1,0);
    \coordinate (B) at ( 1,0);
    \coordinate (O) at (-0.3,0);
  
    \coordinate (fA) at (0.35, 1.126);
    \coordinate (fB) at (-0.65,-0.606);
  
    \draw[thick, blue] (A) -- (B);
    \filldraw[blue] (A) circle(0.03) node[below left] {$w'$};
    \filldraw[blue] (B) circle(0.03) node[below right] {$z'$};

    \draw[thick, red] (fA) -- (fB); 
    \filldraw[red]  (fA) circle(0.03) node[right]  {$w$};
    \filldraw[red]  (fB) circle(0.03) node[right] {$z$};

    \filldraw[black] (O) circle(0.025);

    \draw[<-, thin] (-0.65,0) arc[start angle=180,end angle=240,radius=0.35];
    \node at (-0.69,-0.225) {$\theta$};

    \node[blue] at (0.3,0.1) {$1-r$};
    \node[blue] at (-0.65,0.1) {$r$};

    \node[red] at (-0.27,0.55) {$1-r$};
    \node[red] at (-0.35,-0.35) {$r$};

    \end{tikzpicture}

    \caption{A geometric picture of the parametrisation of isosceles trapezoids by the parameters $\theta$ and $r$ and the map $G_{r,\theta}:\mathbb{C}^2 \rightarrow \mathbb{C}^2$.}
    \label{fig:cyclic_map}
\end{figure}

The construction above defines a transformation $G_{r,\theta}$ on any pair of points $(z,w) \in \mathbb{C}^2 \setminus \Delta(\mathbb{C})$ by $G_{r,\theta}(z,w) = (z',w')$. Finding inscriptions of an isosceles trapezoid $T$ then reduces to looking for pairs of points $(z,w) \in (\gamma \times \gamma) \setminus \Delta(\gamma)$ (representing the endpoints of a diagonal) such that $G_{r,\theta}(z,w) \in \gamma \times \gamma$. There is a unique continuous extension of the map $G_{r,\theta}$ to $\mathbb{C}^2$, which can be given explicitly as follows:
\begin{proposition} \label{prop:rotation_map}
    The map $G_{r,\theta}:\mathbb{C}^2 \rightarrow \mathbb{C}^2$ is given by $G_{r,\theta} = {F_r}^{-1} \circ R_\theta \circ F_r$, where $F_r, R_\theta: \mathbb{C}^2 \rightarrow \mathbb{C}^2$ are the linear maps
    \begin{equation}
        F_r = \begin{pmatrix} 1-r  & r \\ \sqrt{r(1-r)} & -\sqrt{r(1-r)} \end{pmatrix} \quad \text{and} \quad R_\theta = \begin{pmatrix} 1  & 0 \\ 0 & e^{-i\theta} \end{pmatrix} \nonumber
    \end{equation}
    for $r \in (0, 1/2]$ and $\theta \in (0, \pi)$.
\end{proposition}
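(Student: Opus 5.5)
The plan is to compute the composite ${F_r}^{-1} \circ R_\theta \circ F_r$ directly and check that it matches the geometric definition of $G_{r,\theta}$ on $\mathbb{C}^2 \setminus \Delta(\mathbb{C})$. Uniqueness of the continuous extension then follows from density.

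First I check that $F_r$ is invertible. Write $s = \sqrt{r(1-r)}$, which is strictly positive for $r \in (0,1/2]$. Then $\det F_r = -s(1-r) - rs = -s \neq 0$. For $(z,w)$ we have
\begin{equation}
F_r(z,w) = \bigl(c,\ s(z-w)\bigr), \qquad c := (1-r)z + rw, \nonumber
\end{equation}
so the first coordinate is exactly the centre of rotation from the definition. Inverting, a pair $(c,d)$ is sent by ${F_r}^{-1}$ to $\bigl(c + r d/s,\ c - (1-r) d/s\bigr)$. To confirm this, note that $c = (1-r)z + rw$ and $d/s = z - w$ give $z - c = r(z-w)$ and $w - c = -(1-r)(z-w)$.

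Next I apply $R_\theta$, which replaces $d = s(z-w)$ by $e^{-i\theta} s (z-w)$ and leaves $c$ fixed. Applying ${F_r}^{-1}$ then gives
\begin{equation}
\bigl(c + r e^{-i\theta}(z-w),\ c - (1-r)e^{-i\theta}(z-w)\bigr) = \bigl(c + e^{-i\theta}(z-c),\ c + e^{-i\theta}(w-c)\bigr). \nonumber
\end{equation}
The right-hand side is precisely the pair obtained by rotating $z$ and $w$ clockwise by the angle $\theta$ about $c = (1-r)z + rw$. That is, it equals $(z',w') = G_{r,\theta}(z,w)$ for $z \neq w$.

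Finally, ${F_r}^{-1} \circ R_\theta \circ F_r$ is linear and hence continuous on all of $\mathbb{C}^2$, and it agrees with $G_{r,\theta}$ off the diagonal. Since $\mathbb{C}^2 \setminus \Delta(\mathbb{C})$ is dense, any continuous extension is uniquely determined, so the extension is this linear map. On the diagonal it is the identity, since $d = 0$ there.

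There is no real obstacle here. The only care needed is the bookkeeping of the factor $s$, which cancels, and the sign convention that $e^{-i\theta}$ corresponds to clockwise rotation.
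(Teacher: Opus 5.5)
Your proof is correct and takes essentially the same route as the paper: the paper applies $F_r$ to both sides of $(z',w') = F_r^{-1}R_\theta F_r(z,w)$, reading off a common division point $(1-r)z+rw$ from the first coordinate and $z'-w' = e^{-i\theta}(z-w)$ from the second. Your explicit inversion of $F_r$ is slightly more complete, since it identifies the output literally as the clockwise rotation about $c$ and justifies uniqueness of the continuous extension by density, which the paper only asserts.
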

\noindent We note here that we have taken $R_\theta$ as the clockwise rotation in the second coordinate. One could just as easily use the anticlockwise rotation, but this would lead to requiring a negative Hamiltonian to generate the transformation.

Proposition \ref{prop:rotation_map} follows immediately from the following lemma.

\begin{lemma}
    Let $z,z',w,w' \in \mathbb{C}$, then if $G_{r, \theta}(z,w) = (z', w')$ and $z \neq w$, the line segments $zw$ and $z'w'$ are the diagonals of an isosceles trapezoid with aspect ratio $s$ and angle $\theta$ .
\end{lemma}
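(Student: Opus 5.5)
Writing $G_{r,\theta}$ for the composite ${F_r}^{-1}\circ R_\theta\circ F_r$ of Proposition \ref{prop:rotation_map}, I would compute it explicitly and check that it agrees with the geometric construction: rotating $z$ and $w$ clockwise by $\theta$ about the point $p=(1-r)z+rw$. (The aspect ratio in the statement, written $s$, should be $r$.) The plan has three steps.

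\textbf{Step 1: the first coordinate of $F_r$ is preserved.} The first coordinate of $F_r(z,w)$ is $p=(1-r)z+rw$, the point dividing the diagonal $zw$ in ratio $r:(1-r)$. Since $R_\theta$ fixes this coordinate, the image $(z',w')$ satisfies
\[
(1-r)z'+rw'=(1-r)z+rw=p.
\]
So $z'w'$ passes through the same point $p$ and is divided by it in the same ratio.

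\textbf{Step 2: the second coordinate rotates the diagonal.} The second coordinate of $F_r(z,w)$ is $\sqrt{r(1-r)}\,(z-w)$, and $R_\theta$ multiplies it by $e^{-i\theta}$. Hence
\[
z'-w'=e^{-i\theta}(z-w).
\]
Combining this with Step 1, solve the linear system $(1-r)z'+rw'=p$, $z'-w'=e^{-i\theta}(z-w)$. This gives
\[
z'=p+r\,e^{-i\theta}(z-w), \qquad w'=p-(1-r)\,e^{-i\theta}(z-w).
\]
Since $z-p=r(z-w)$ and $w-p=-(1-r)(z-w)$, these say exactly $z'-p=e^{-i\theta}(z-p)$ and $w'-p=e^{-i\theta}(w-p)$. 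That is, $z'$ and $w'$ are the clockwise rotations of $z$ and $w$ by $\theta$ about $p$. This matches the definition of $G_{r,\theta}$ on $\mathbb{C}^2\setminus\Delta(\mathbb{C})$.

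\textbf{Step 3: the four points form an isosceles trapezoid.} Assume $z\neq w$ and $\theta\in(0,\pi)$. Then $z,w,z',w'$ are distinct and lie on two lines through $p$ at angle $\theta$. The distances satisfy $|z'-p|=|z-p|=r|z-w|$ and $|w'-p|=|w-p|=(1-r)|z-w|$, so
\[
|z-p|\cdot|w-p|=|z'-p|\cdot|w'-p|.
\]
Both products have $p$ lying between the points on each line, so by the converse of the intersecting chords theorem the four points are concyclic. Reflection in the bisector of the angle between the two lines at $p$ swaps $z\leftrightarrow z'$ and $w\leftrightarrow w'$, because rotation by $\theta$ about $p$ composed with that reflection is a reflection fixing the relevant rays. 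Hence the chords $zz'$ and $ww'$ are both perpendicular to the bisector, so they are parallel. This gives an isosceles trapezoid with diagonals $zw$ and $z'w'$, angle $\theta$ and aspect ratio $r$.

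The main obstacle is orientation and labelling conventions: checking that the clockwise sign in $e^{-i\theta}$ and the labelling $z,z',w,w'$ match Figure \ref{fig:cyclic_map}, so that the angle is $\theta$ and not $\pi-\theta$. I would settle this by comparing with a concrete example, for instance $r=1/2$, where the figure is a rectangle.
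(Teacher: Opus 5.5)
Your proposal is correct and follows the paper's proof: apply $F_r$, read off $(1-r)z'+rw'=(1-r)z+rw$ from the fixed first coordinate and $z'-w'=e^{-i\theta}(z-w)$ from the rotated second coordinate, and conclude. Your explicit solution $z'-p=e^{-i\theta}(z-p)$, $w'-p=e^{-i\theta}(w-p)$ and the chord/reflection check in Step~3 fill in details the paper leaves implicit. Step~2 also already settles the orientation concern you raise, since it shows clockwise rotation by exactly $\theta$ about $p$, which is the paper's definition of the angle.
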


\begin{proof}
    Applying the map $F_r$ (a diffeomorphism) to both sides of the equation, we obtain $R_\theta \circ F_r(z,w) = F_r(z',w')$. Equivalence in the first coordinate ensures that the line segments $zw$ and $z'w'$ meet at the point $p = (1-r)z + rw = (1-r)z' + rw'$, ensuring that $p$ divides both the line segments $zw$ and $z'w'$ in the ratio $r:1-r$. Equivalence in the second coordinate gives $z'-w' = e^{-i\theta}(z-w)$, so $zw$ and $z'w'$ meet at an angle $\theta$ and are of equal length. This verifies the claim.
\end{proof}

Finding inscribed isosceles trapezoids in $\gamma$ is then equivalent to finding points of intersection between the tori $\gamma \times \gamma$ and $G_{r,\theta}(\gamma \times \gamma)$ away from the \emph{trivial} intersection $\Delta(\gamma)$ (agreeing with the approach used in \cite{Greene-Lobb:Floer-homology} for inscriptions of rectangles). The tori intersect cleanly along the loop of trivial intersections, which correspond to \emph{degenerate} inscriptions where all four vertices coincide. Our initial aim is to show that non-trivial intersection points exist, for which we use Lagrangian Floer homology.

\subsection{Jordan Floer homology}

The approach to construct Jordan Floer homology associated with inscriptions of isosceles trapezoids will be broadly the same as that used by Greene and Lobb for the case of rectangles in \cite{Greene-Lobb:Floer-homology}, now with an extra parameter $r$. Together, the aspect ratio $r$ and angle $\theta$ parametrise the space of isosceles trapezoids up to similarity (as described above) with the case $r=1/2$ representing rectangles. We now provide an outline of the approach, omitting technical details.

Given a Jordan curve $\gamma \subset \mathbb{C}$, we consider the torus $\gamma \times \gamma \subset \mathbb{C}^2$. This torus is Lagrangian in $\mathbb{C}^2$ with respect to the symplectic form $\omega = (1-r)  \cdot dx_1 \wedge dy_1 + r \cdot dx_2 \wedge dy_2$. We then define a Hamiltonian $H$ such that the time-1 flow of the associated Hamiltonian vector field $X_H$ is the transformation $G_{r,\theta}$. This gives a second Lagrangian torus $G_{r,\theta}(\gamma \times \gamma)$. Generically, these Lagrangian tori intersect cleanly along the loop $\Delta(\gamma)$ and transversally at points of intersection away from this loop, which we call non-trivial intersection points and represent inscribed isosceles trapezoids. To each of these intersection points $p$, there is an associated \emph{trajectory} $\tau:[0,1] \rightarrow \mathbb{C}^2$ of the Hamiltonian vector field $X_H$ with $\tau(0) \in \gamma \times \gamma$ and $\tau(1)=p$. Non-trivial intersection points are in one-to-one correspondence with non-constant trajectories.

We then construct a chain complex, the Jordan Floer chain complex $\operatorname{JFC}(\gamma, r, \theta)$, generically generated by the non-constant trajectories of $H$. The differential $\partial$ on this chain complex is defined as a count of certain strips between trajectories, called \emph{pseudo-holomorphic strips}. We only consider pseudo-holomorphic strips that avoid the diagonal $\Delta(\mathbb{C}) = \{(z,z) \mid z \in \mathbb{C} \}$. Showing that this differential satisfies $\partial^2 = 0$ is one of the key technical steps in defining Jordan Floer homology, as it requires proving that the limits of diagonal-avoiding pseudo-holomorphic strips stay away from the diagonal. Fortunately, some of the arguments for establishing Jordan Floer homology for inscriptions of rectangles made in \cite{Greene-Lobb:Floer-homology} extend directly to the isosceles trapezoid case. Taking the homology of the Jordan Floer chain complex gives the Jordan Floer homology $\operatorname{JF}(\gamma, r, \theta)$. Since generators of the chain complex correspond to inscriptions of isosceles trapezoids of aspect ratio $r$ and angle $\theta$, showing that $\operatorname{JF}(\gamma, r, \theta)$ is non-trivial is sufficient to show that $\gamma$ inscribes such an isosceles trapezoid.

The Jordan Floer chain complex has a real filtration called the action, which lifts to a filtration grading for each non-zero homology class $\alpha \in \operatorname{JF}(\gamma, r, \theta)$ called the spectral invariant of $\alpha$. There is a single non-zero homology class in $\operatorname{JF}_2(\gamma, r, \theta)$, whose associated spectral invariant we denote by $l_2(\gamma, r, \theta)$. This spectral invariant can be used to obstruct \emph{shrinkout} when considering non-smooth Jordan curve limits (see Section \ref{sec:shrinkout}).
 
\subsection{The main result}

The main goal of this paper is to construct and examine Jordan Floer homology for inscriptions of isosceles trapezoids in Jordan curves. Two classes of curves of interest are rectifiable Jordan curves, which are those of finite length, and locally monotone Jordan curves, which we now define.
\begin{definition} \label{def:locally_monotone}
    Let $\gamma$ be a Jordan curve and $\gamma:S^1 \cong \mathbb{R}/ 2\pi\mathbb{Z} \rightarrow \mathbb{C} \cong \mathbb{R}^2$ be a parametrisation $\gamma$, then $\gamma$ is called \emph{locally monotone} if for every $\theta \in \mathbb{R}$ there exists an open neighbourhood $U_\theta \subset \mathbb{R}$ of $\theta$ and a unit vector $v_\theta \in \mathbb{R}^2$ such that the map $g_{v_\theta}:\varphi \mapsto \gamma(\varphi) \cdot v_\theta$ is strictly monotone on $U_\theta$. 
\end{definition}

\noindent Locally monotone Jordan curves can be locally represented as the graph of a function. To be more precise, writing $f_{v_\theta}: \varphi \mapsto \gamma(\varphi) \cdot n_\theta$, where $n_\theta$ is a unit vector perpendicular to $v_\theta$, the map $f_{v_\theta} \circ g_{v_\theta}^{-1}: g_{v_\theta}(U_\theta) \rightarrow \mathbb{R}$ is a continuous function for every $\theta \in \mathbb{R}$.

\begin{restatable}{mainthm}{theoremB} \label{thm:intro_main_theorem}
    Let $\gamma$ be a rectifiable or locally monotone Jordan curve, $r \in (0,1/2]$ be an aspect ratio, and assume $\lim_{\theta \rightarrow \pi} l_2(\gamma,r, \theta) \neq 0$, then $\gamma$ inscribes an isosceles trapezoid of aspect ratio $r$ and angle $\theta$ for every
    \begin{equation}
        0 < \theta < \frac{\operatorname{Area}(\gamma)}{2(1-r) \cdot \operatorname{Rad} (\gamma)^2}. \nonumber
    \end{equation}
\end{restatable}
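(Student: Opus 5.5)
We argue by approximation together with an action (spectral invariant) estimate that rules out shrinkout. Fix $r$ and $\theta$ in the stated range. Let $f:\overline{\mathbb{D}}\to\overline{D}$ be the Carathéodory extension of the Riemann map from Section~\ref{sec:shrinkout}. Let $\gamma_n$ be the smooth curves $f(\{|z|=1-1/n\})$, perturbed slightly if necessary so that $\operatorname{JFC}(\gamma_n,r,\theta)$ is generically defined. For rectifiable or locally monotone $\gamma$ we expect to choose the $\gamma_n$ so that $\operatorname{Area}(\gamma_n)\to\operatorname{Area}(\gamma)$, $\operatorname{Rad}(\gamma_n)\to\operatorname{Rad}(\gamma)$, and the spectral invariants $l_2(\gamma_n,r,\theta)$ converge to $l_2(\gamma,r,\theta)$. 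The last property would come from a continuity (Hofer-type) estimate for spectral invariants under $C^0$-small changes of the Lagrangian tori $\gamma_n\times\gamma_n$. Rectifiability or local monotonicity is used exactly here, to make the relevant symplectic areas converge.

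\textbf{Step 1: realise the spectral invariant.} Since the class in $\operatorname{JF}_2(\gamma_n,r,\theta)$ is non-zero, $l_2(\gamma_n,r,\theta)$ equals the action of some non-constant trajectory, that is, of an inscribed trapezoid $T_n\in T_{r,\theta}$ in $\gamma_n$. We then compute the action of a trajectory ending at $p=G_{r,\theta}(z,w)$. Using the Hamiltonian that generates the rotation $R_\theta$ in the second $F_r$-coordinate, the action is, up to a sign convention, the area of a disc bounded by the trajectory and the tori, minus $\theta$ times a quadratic in $|z-w|$ of the form $c(r)|z-w|^2$. Once the constant is pinned down, this should read as $l_2=\operatorname{Area}(\gamma_n)-2\theta(1-r)\rho_n^2$ plus a correction that vanishes when the trapezoid is degenerate. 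Here $\rho_n$ measures the size of $T_n$, so $\rho_n\le\operatorname{Rad}(\gamma_n)$.

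\textbf{Step 2: an inequality that excludes shrinkout.} I expect the paper to prove, using monotonicity of $l_2$ in $\theta$, a lower bound of the form
\[
l_2(\gamma_n,r,\theta)\ \ge\ \operatorname{Area}(\gamma_n)-2\theta(1-r)\operatorname{Rad}(\gamma_n)^2 .
\]
The idea is that $l_2(\cdot,\theta)$ is controlled by the Hamiltonian's oscillation over the region swept, and that as $\theta\to\pi$ the limit equals the area term. The hypothesis $\lim_{\theta\to\pi}l_2\neq0$ is what forces this limit to be $\operatorname{Area}$ rather than $0$; the value $0$ would correspond to the constant trajectories along $\Delta(\gamma)$.

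For $\theta$ below the bound in the statement, the right-hand side stays bounded away from $0$ uniformly in $n$. A shrinking sequence $T_n\to$ point would instead have action tending to $0$, because both the disc area and $|z-w|^2$ go to $0$. Hence the $T_n$ cannot shrink. By compactness a subsequence of $T_n$ converges to a non-degenerate $T\in T_{r,\theta}$ inscribed in $\gamma$.

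\textbf{Main obstacle.} The hard step is relating $\lim_{\theta\to\pi}l_2$ to $\operatorname{Area}(\gamma)$ and establishing the monotonicity or Lipschitz dependence of $l_2$ on $\theta$ uniformly in $n$. The difficulty is that in the limit $\theta\to\pi$ trajectories can degenerate to quadrisecants or vertices, not only to binormals. I would first try to prove $|\partial_\theta l_2|\le 2(1-r)\operatorname{Rad}^2$ via the standard spectral-invariant continuity $|l(H)-l(H')|\le\|H-H'\|_{\mathrm{osc}}$ on the disc of radius $\operatorname{Rad}$. Integrating this from $\pi$ down to $\theta$ then gives the inequality above, provided the hypothesis identifies $\lim_{\theta\to\pi}l_2$ with $\operatorname{Area}(\gamma)$.
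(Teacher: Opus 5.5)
There is a genuine gap: the inequality at the heart of your Step 2 is false, and your shrinkout exclusion uses the wrong set of limiting actions.

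First, $l_2(\gamma_n,r,\theta)\ge\operatorname{Area}(\gamma_n)-2\theta(1-r)\operatorname{Rad}(\gamma_n)^2$ cannot hold. We have $l_2(\gamma_n,r,0)=0$, and Proposition~\ref{prop:bounded_spectral_variation} gives $\partial_\theta l_2\le 2r(1-r)\operatorname{Rad}^2$ (you dropped the factor $r$). Hence $l_2(\gamma_n,r,\theta)\le 2r(1-r)\operatorname{Rad}(\gamma_n)^2\,\theta$. This tends to $0$ as $\theta\to 0$, while your right-hand side tends to $\operatorname{Area}(\gamma_n)$. The underlying error is reading the hypothesis as $\lim_{\theta\to\pi}l_2=\operatorname{Area}(\gamma)$. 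It only says the limit is non-zero, and in the trapezoid setting that limit may be an uncontrolled quadrisecant action, or $r\operatorname{Area}$ or $(1-r)\operatorname{Area}$ from shrinkout at a vertex. Even granting your identification, integrating a Lipschitz bound downward from $\pi$ yields only $l_2(\theta)\ge l_2(\pi)-c(\pi-\theta)$, which is useless away from $\pi$. The ingredient you are missing is the triangle inequality for the degree-2 invariant (Lemma~\ref{lemma:spectral_triangle_inequality}, resting on $\alpha\star\alpha=\alpha$ for the relative fundamental class). Subadditivity gives $l_2(\gamma_n,r,\pi/2^k)\ge l_2(\gamma_n,r,\pi)/2^k$. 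By monotonicity, $l_2(\gamma_n,r,\theta)\ge\varepsilon_{n,k}:=l_2(\gamma_n,r,\pi)/2^k$ for all $\theta\ge\pi/2^k$, and the hypothesis makes $\varepsilon_k=\lim_n\varepsilon_{n,k}>0$. This is how the assumption at $\theta=\pi$ is transported to small angles.

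Second, a shrinking sequence $T_n\to p$ does not have action tending to $0$. The boundary path of a preferred capping in $\gamma_n\times\gamma_n$ may run the long way round $\gamma_n$ in one factor, exactly as for elegantly or almost-elegantly inscribed trapezoids. Its symplectic area then picks up a weighted copy of $\operatorname{Area}(\gamma_n)$. The correct statement is Lemma~\ref{lemma:shrinkout}: the limiting action lies in $\{0,\,r\operatorname{Area}(\gamma),\,(1-r)\operatorname{Area}(\gamma)\}$ modulo $\operatorname{Area}(\gamma)$. That lemma is where rectifiability (the uniform length bound of Lemma~\ref{lemma:rectifiable_approx}) or local monotonicity is used, to force the small loops inside the shrinking discs $B_n$ to bound vanishing area. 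It is not used in a continuity estimate for $l_2$, which for non-smooth $\gamma$ is simply the definition.

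Consequently a lower bound alone excludes nothing; you must also keep $l_2$ uniformly below $r\operatorname{Area}(\gamma)$. After rescaling so that $\operatorname{Area}(\gamma_n)=\operatorname{Area}(\gamma)$, the upper bound $l_2(\gamma_n,r,\theta)\le 2r(1-r)\operatorname{Rad}(\gamma_n)^2\theta$ gives $l_2\le r\operatorname{Area}(\gamma)-\varepsilon_{n,k}$ whenever $\theta\le (r\operatorname{Area}(\gamma)-\varepsilon_{n,k})/(2r(1-r)\operatorname{Rad}(\gamma_n)^2)$. On that range $l_2$ lies in $[\varepsilon_{n,k},\,r\operatorname{Area}(\gamma)-\varepsilon_{n,k}]$, which avoids all three shrinkout values since $r\le 1/2$. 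Letting $n\to\infty$ and then $k\to\infty$ gives exactly $\theta<\operatorname{Area}(\gamma)/(2(1-r)\operatorname{Rad}(\gamma)^2)$. Your threshold comes out right only because the missing factor $r$ in the Lipschitz constant cancels against the wrong target value, $\operatorname{Area}$ instead of $r\operatorname{Area}$.
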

\noindent  A class of curves for which we can easily conclude that $\lim_{\theta \rightarrow \pi} l_2(\gamma,r, \theta) \neq 0$ is locally Lipschitz-graphical Jordan curves, giving Theorem \ref{thm:first_theorem} as a direct corollary .

\begin{definition} \label{def:graphically_Lipschitz}
    Let $\gamma$ be a locally monotone Jordan curve, then $\gamma$ is called \emph{locally $K$-Lipschitz-graphical} if for every $\theta \in \mathbb{R}$ there exists an open neighbourhood $U_\theta \subset \mathbb{R}$ of $\theta$ and a unit vector $v_\theta \in \mathbb{R}^2$ such that $f_{v_\theta} \circ g_{v_\theta}^{-1}$ is a $K$-Lipschitz function on $g_{v_\theta}(U_\theta)$. If $\gamma$ is locally $K$-Lipschitz graphical for some constant $K>0$ then we say that $\gamma$ is \emph{locally Lipschitz-graphical}.
\end{definition}

\section{Jordan Floer homology}

The goal of this section is to construct the Jordan Floer homology for the following data: a smooth Jordan curve $\gamma \subset \mathbb{C}$, an aspect ratio $r \in (0,1/2]$, an angle $\theta \in (0, \pi)$, a Hamiltonian perturbation $h_t: \mathbb{C}^2 \rightarrow \mathbb{C}^2$, and an almost complex structure $J_t:T\mathbb{C}^2 \rightarrow T\mathbb{C}^2$. This will closely follow the construction in \cite{Greene-Lobb:Floer-homology}, which constructs the Jordan Floer homology associated with inscriptions of rectangles in smooth Jordan curves.

The first step is to define the Jordan Floer complex $\operatorname{JFC}(\gamma, r, \theta, h_t)$ and its differential $\partial_{(\gamma, r, \theta, h_t, J_t)}$. The first main result will be verifying that this map squares to zero. This requires verifying that limits of sequences of pseudo-holomorphic strips that avoid the diagonal behave nicely, that is, they do not admit bubbling and any breaking occurs away from the diagonal. This is the main technical step in defining the Jordan Floer homology for the given data.

We then consider chain maps
\begin{equation}
    \operatorname{JFC}(\gamma, r, \theta_1, h_t^1, J_t^1) \longrightarrow \operatorname{JFC}(\gamma, r, \theta_2, h_t^2, J_t^2), \nonumber
\end{equation} known as continuation maps. They provide isomorphisms on the associated homology groups, demonstrating that the Jordan Floer homology groups $\operatorname{JF}(\gamma, s, \theta, h_t, J_t)$ are independent of the Hamiltonian perturbation $h_t$ and the almost-complex structure $J_t$. The homology groups can therefore be written simply as $\operatorname{JF}(\gamma, s, \theta)$ and we have the isomorphisms induced from the continuation maps
\begin{equation}
    \operatorname{JF}(\gamma, r, \theta_1) \longrightarrow \operatorname{JF}(\gamma, r, \theta_2). \nonumber
\end{equation}
Careful treatment of the limit as $\theta \rightarrow 0$ then allows us to determine the Jordan Floer homology for any aspect ratio $r$ and angle $\theta$ up to isomorphism as the Morse homology of a torus $\gamma \times \gamma$ relative to its diagonal curve $\Delta(\gamma)$.

\subsection{Jordan Floer setup}

To define the Jordan Floer homology associated with inscribed isosceles trapezoids, we require three key ingredients: a symplectic manifold $(\mathbb{C}^2, \omega)$, a Lagrangian $\gamma \times \gamma \subset \mathbb{C}^2$, and a Hamiltonian $H:\mathbb{C}^2 \rightarrow \mathbb{R}$. We now fix an aspect ratio $r \in (0,1/2]$ and an angle $\theta \in (0,\pi)$. To ensure that the Hamiltonian generates the transformation $G_{r, \theta}$ defined in Section 1.2, we require appropriate choices for the symplectic form $\omega$ and the Hamiltonian $H$. 

\begin{definition}[Symplectic form]
    We define the \emph{symplectic form} on $\mathbb{C}^2$ in coordinates $(z,w)=(x_1+iy_1,x_2+iy_2)$ by
    \begin{equation}
        \omega \coloneq F_s^* \omega_{\operatorname{std}} = (1-r) \cdot dx_1 \wedge dy_1 + r \cdot dx_2 \wedge dy_2, \nonumber
    \end{equation}
    where $\omega_{\operatorname{std}} = dx_1 \wedge dy_1 + dx_2 \wedge dy_2$ is the standard symplectic form on $\mathbb{C}^2$. 
\end{definition}

\begin{definition}[Hamiltonian]
    We define the \emph{Hamiltonian} $H:\mathbb{C}^2 \rightarrow \mathbb{R}$ for an aspect ratio $r \in (0,1/2]$ and an angle $\theta \in (0,\pi)$ by
    \begin{equation}
        H(z,w) = \frac{1}{2} \theta r(1-r)|z-w|^2. \nonumber
    \end{equation} 
\end{definition}

\begin{lemma}
    The Hamiltonian $H$ defined above on the symplectic manifold $(\mathbb{C}^2, \omega)$ generates the transformation $G_{r,\theta}$. That is, $\Phi_{H}^1 = G_{r,\theta}$, where $\Phi_{H}^t$ is the time-$t$ flow of $X_H$, the unique Hamiltonian vector field that satisfies $\iota_{X_H}\omega = dH$.
\end{lemma}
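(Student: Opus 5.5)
The plan is to compute $X_H$ directly, observe that it is a linear vector field, and integrate it explicitly. Then compare the resulting time-$1$ map with $F_r^{-1}\circ R_\theta\circ F_r$ from Proposition \ref{prop:rotation_map}.

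The cleanest route is to pass to the coordinates $(u,v) = F_r(z,w)$, where
\[
u=(1-r)z+rw, \qquad v=\sqrt{r(1-r)}\,(z-w).
\]
By the definition $\omega = F_r^*\omega_{\operatorname{std}}$, the map $F_r$ is a symplectomorphism $(\mathbb{C}^2,\omega)\to(\mathbb{C}^2,\omega_{\operatorname{std}})$. Hamiltonian flows are natural under symplectomorphisms, so
\[
\Phi^t_H = F_r^{-1}\circ \Phi^t_{K}\circ F_r, \qquad K = H\circ F_r^{-1}.
\]
In the new coordinates, $|v|^2 = r(1-r)|z-w|^2$, and hence
\[
K(u,v) = \tfrac{1}{2}\theta|v|^2.
\]
So $K$ depends only on $v$, and it suffices to check that $\Phi^1_K = R_\theta$.

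Before using this, I will verify the pullback identity $F_r^*\omega_{\operatorname{std}} = (1-r)\,dx_1\wedge dy_1 + r\,dx_2\wedge dy_2$, since the definition asserts it. For a complex-linear map $(z,w)\mapsto(az+bw,\,cz+dw)$ with real coefficients, the pullback of $\tfrac{i}{2}(du\wedge d\bar u + dv\wedge d\bar v)$ has the following coefficients:
\begin{itemize}
\item on $dz\wedge d\bar z$: $a^2+c^2 = (1-r)^2 + r(1-r) = 1-r$;
\item on $dw\wedge d\bar w$: $b^2+d^2 = r^2 + r(1-r) = r$;
\item on the cross terms: $ab+cd = r(1-r) - r(1-r) = 0$.
\end{itemize}
This confirms the identity.

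Next, with $K = \tfrac{\theta}{2}(x^2+y^2)$ for $v=x+iy$, I solve $\iota_{X_K}\omega_{\operatorname{std}} = dK$. This gives $\dot u = 0$ and $(\dot x,\dot y) = \theta(y,-x)$, that is, $\dot v = -i\theta v$. Integrating, the flow is
\[
\Phi^t_K(u,v) = (u,\, e^{-i\theta t}v),
\]
and at $t=1$ this is exactly $R_\theta$. Conjugating back by $F_r$ gives $\Phi^1_H = F_r^{-1}\circ R_\theta\circ F_r = G_{r,\theta}$, as required.

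The only real obstacle is sign conventions. With $\iota_{X}\omega = dH$ and $\omega = dx\wedge dy$, one has $\iota_X\omega = X^x\,dy - X^y\,dx$. Matching this against $dK = \theta(x\,dx + y\,dy)$ gives $X^y = -\theta x$ and $X^x = \theta y$, which is the clockwise rotation. This agrees with the paper's remark that choosing the clockwise $R_\theta$ is what allows a positive Hamiltonian. I would double-check this sign explicitly, because the opposite convention would produce $e^{+i\theta}$.
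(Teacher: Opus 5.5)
Your proof is correct and takes essentially the same route as the paper: you pass to the coordinates $(u,v)=F_r(z,w)$, in which $\omega$ is standard and $H$ becomes $\tfrac{1}{2}\theta|v|^2$, and you check that this generates the clockwise rotation $R_\theta$ in $v$. The differences are only cosmetic. The paper works in polar coordinates $v=Re^{i\varphi}$ and verifies $\iota_{-\theta\partial_\varphi}\omega = dH$, whereas your Cartesian computation, together with your explicit checks of $F_r^*\omega_{\operatorname{std}}$ and of the sign convention, also covers the diagonal $v=0$, where polar coordinates degenerate.
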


\begin{proof}
    Using the change of coordinates $(u,v) = F_r(z,w)$, then writing $u = X_1 + iY_1$ and $v = Re^{i\varphi}$, one has
    \begin{equation}
        \omega = dX_1 \wedge dY_1 + R \cdot dR \wedge d\varphi. \nonumber
    \end{equation}
    In these coordinates, the transformation $G_{r,\theta}$ is simply the rotation $R_\theta$, which is generated by the time-$1$ flow of the vector field $-\theta \frac{\partial}{\partial\varphi}$. Using the definition $i_X\omega \coloneq \omega(X, \cdot)$, we have
    \begin{eqnarray}
        \iota_{-\theta \frac{\partial}{\partial\varphi}} \omega &\coloneqq& \omega \left( -\theta \frac{\partial}{\partial\varphi}, \; \cdot \; \right) = \theta R \cdot dR = d \left( \frac{\theta R^2}{2} \right) \nonumber \\
        &=& d \left( \frac{\theta|v|^2}{2} \right) = d \left( \frac{1}{2}\theta r(1-r)|z-w|^2 \right) = dH. \nonumber
    \end{eqnarray}   
    This shows that the time-$1$ flow of the Hamiltonian vector field $X_H$ is the transformation $G_{r,\theta}$.
\end{proof}

\noindent With the symplectic form and Hamiltonian chosen, it is easy to verify that the tori representing pairs of points on the Jordan curve $\gamma$ and its rotation by $G_{r,\theta}$ are Lagrangian.

\begin{lemma}
    The submanifolds $\gamma \times \gamma$ and $G_{r,\theta}(\gamma \times \gamma)$ are Lagrangian in $(\mathbb{C}^2, \omega)$.
\end{lemma}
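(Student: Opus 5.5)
The plan is to check the two submanifolds separately: $\gamma \times \gamma$ by a direct dimension count on the form $\omega$, and $G_{r,\theta}(\gamma \times \gamma)$ by showing that $G_{r,\theta}$ is a symplectomorphism of $(\mathbb{C}^2,\omega)$.

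For $\gamma \times \gamma$, note first that it is a smooth $2$-dimensional submanifold of the $4$-dimensional manifold $\mathbb{C}^2$, since $\gamma$ is a smooth Jordan curve. So it suffices to show that $\omega$ vanishes on its tangent spaces. At a point $(z,w)$, the tangent space is $T_z\gamma \oplus T_w\gamma$, spanned by $(\dot\gamma(s),0)$ and $(0,\dot\gamma(t))$. Since $\omega = (1-r)\, dx_1\wedge dy_1 + r\, dx_2\wedge dy_2$ is a sum of pullbacks of $2$-forms from the two factors, we evaluate each term:
\begin{itemize}[label={}]
\item The term $(1-r)\,dx_1 \wedge dy_1$ restricts to a $2$-form on the $1$-dimensional curve $\gamma$ in the first factor, so it is zero.
\item Likewise $r\,dx_2\wedge dy_2$ restricts to zero on $\gamma$ in the second factor.
\item Any cross evaluation pairs a vector with zero first component against one with zero second component, so it vanishes as well.
\end{itemize}
Hence $\omega|_{\gamma\times\gamma} = 0$, and $\gamma \times \gamma$ is Lagrangian.

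For the image torus, I would use the preceding lemma, which gives $G_{r,\theta} = \Phi^1_H$ as the time-$1$ flow of the Hamiltonian vector field $X_H$. Hamiltonian flows preserve the symplectic form. Indeed, by Cartan's formula,
\[
\mathcal{L}_{X_H}\omega = d\iota_{X_H}\omega + \iota_{X_H} d\omega = d(dH) = 0,
\]
so $(\Phi^t_H)^*\omega = \omega$ for all $t$. Alternatively, one can check this directly: $\omega = F_r^*\omega_{\operatorname{std}}$, and $R_\theta$ is unitary, hence preserves $\omega_{\operatorname{std}}$, so $G_{r,\theta} = F_r^{-1} R_\theta F_r$ preserves $\omega$. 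Either way, $G_{r,\theta}$ is a diffeomorphism with $G_{r,\theta}^*\omega = \omega$. It therefore maps the $2$-dimensional submanifold $\gamma\times\gamma$ to a $2$-dimensional submanifold, and
\[
\omega|_{G(\gamma\times\gamma)} = \bigl(G^{-1}\bigr)^*\bigl(G^*\omega|_{\gamma\times\gamma}\bigr) = 0.
\]
So $G_{r,\theta}(\gamma \times \gamma)$ is Lagrangian.

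There is no real obstacle here. The only point needing care is to justify that $G_{r,\theta}$ is symplectic, and both routes above do this: the Hamiltonian-flow argument relies on the lemma just proved, and the linear-algebra check confirms it independently.
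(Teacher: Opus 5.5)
Your proposal is correct and is essentially the paper's argument: $\gamma\times\gamma$ is Lagrangian because $\omega$ is a weighted sum of area forms pulled back from the two factors, each of which vanishes on a curve, and $G_{r,\theta}(\gamma\times\gamma)$ is Lagrangian because $G_{r,\theta}=\Phi^1_H$ is a symplectomorphism. Your alternative check via $\omega = F_r^*\omega_{\operatorname{std}}$ and the unitarity of $R_\theta$ is a valid independent confirmation that the paper does not spell out.
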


\begin{proof}
    The fact that $\gamma \times \gamma$ is Lagrangian follows from the fact that $\gamma$ is Lagrangian in $(\mathbb{C}, dx \wedge dy)$. The fact that $G_{r, \theta}(\gamma \times \gamma)$ is Lagrangian then follows immediately from the fact that $\gamma \times \gamma$ is Lagrangian along with the fact that $G_{r,\theta}$ is a symplectomorphism, since it is the flow of a Hamiltonian vector field.
\end{proof}

In order to ensure later that the Floer differential is well defined, we also define the time-dependent version of the Hamiltonian. This generates the same transformation $G_{r,\theta}$. 

\begin{definition}[Time dependent Hamiltonian]
    For a smooth function $\beta:[0,1] \rightarrow [0, \infty)$ satisfying $\int_0^1 \beta = 1$ and with support in $[0.1,0.9]$, we define the time dependent Hamiltonian $H_t:\mathbb{C}^2 \rightarrow \mathbb{R}$ by
    \begin{equation}
        H_t(z,w) = \beta(t) H(z,w) \nonumber
    \end{equation}
    for $0 \leq t \leq 1$ and $(z,w) \in \mathbb{C}^2$.
\end{definition}
\noindent The condition $\int_0^1 \beta = 1$ ensures that $H_t$ and $H$ both generate the same transformation $G_{r,\theta}$. The restricted support condition ensures $H_t=0$ for $t \in [0,0.1] \cup [0.9,1]$, which is an important technical detail later.

\subsection{Jordan Floer chain complex}

One issue we wish to avoid is that the Lagrangians may not intersect transversely away from the diagonal. To achieve transversality, we introduce a small Hamiltonian perturbation. In a small neighbourhood of the diagonal $\Delta(\gamma) = \{ (z,z) \mid z \in \gamma \}$, we can ensure that $\gamma \times \gamma$ and $\Phi^1_{H_t}(\gamma \times \gamma)$ only intersect at the diagonal, and hence we do not require a Hamiltonian perturbation in this region. This will be an important technical detail later.

\begin{definition}[Width]
    The \emph{width} of a smooth Jordan curve $\gamma$, denoted $\operatorname{Width}_{r, \theta}(\gamma)$ is the infimal diagonal length of an inscribed isosceles trapezoid with fixed aspect ratio $r \in (0,1/2]$ and angle $\theta \in (0, \pi)$.
\end{definition}

\noindent Note that we cannot define the width of a smooth Jordan curve as the infimal diagonal length of an inscribed isosceles trapezoid (with no restriction on $r$ or $\theta$), as would be analogous to the definition used in \cite{Greene-Lobb:Floer-homology} for the Jordan Floer complex associated with rectangles, since for any fixed $r \neq 1/2$ and for any smooth Jordan curve $\gamma$ it is possible to find a sequence of inscribed isosceles trapezoids that degenerate to a point as $\theta \rightarrow \pi$. With our slightly modified definition of width, we now recall some key definitions from \cite{Greene-Lobb:Floer-homology} in order to set consistent notation and terminology.

\begin{definition}[Admissible Hamiltonian perturbation]
    Fix a smooth Jordan curve $\gamma$, an aspect ratio $0 < r \leq 1/2$, and an angle $0 < \theta < \pi$. A time dependent Hamiltonian perturbation $h_t:\mathbb{C}^2 \rightarrow \mathbb{R}$ for $0 \leq t \leq 1$ is called \emph{admissible} if it satisfies: 
    \begin{enumerate}[itemsep = 0pt]
        \item $h_t = 0$ whenever $0 \leq t \leq 0.1$ or $0.9 \leq t \leq 1$,
        \item $h_t(z,w) = 0$ whenever $|z-w| \leq \operatorname{Width}_{r, \theta}(\gamma)/2$,
        \item the Lagrangians $\gamma \times \gamma$ and $\Phi_{H_t+h_t}^1(\gamma \times \gamma)$ intersect cleanly along $\Delta(\gamma)$ and transversely elsewhere.
    \end{enumerate}
    Moreover, we call $(\gamma, r, \theta, h_t)$ an \emph{admissible quadruple}.
\end{definition}

\noindent Condition (1) ensures that $H_t + h_t = 0$ whenever $t \in [0,0.1] \cup [0.9,1]$, which means that the Cauchy-Riemann-Floer equation reduces to the Cauchy-Riemann equation for such values of $t$. Condition (2) ensures that in a neighbourhood of $\Delta(\gamma)$ we have $\Phi_{H_t+h_t}^1 = \Phi_{H_t}^1 = G_{r,\theta}$, which means that the condition Lagrangians $\gamma \times \gamma$ and $\Phi_{H_t+h_t}^1(\gamma \times \gamma)$ intersect cleanly along $\Delta(\gamma)$ is automatically satisfied. Since all other intersections of $\gamma \times \gamma$ and $\Phi_{H_t}^1(\gamma \times \gamma)$ occur at a distance of at least $\operatorname{Width}_{r,\theta}(\gamma)$ from the diagonal, condition (3) reduces to the standard transversality requirement in Lagrangian Floer theory that ensures nice behaviour of the moduli spaces. 

By the above discussion, a standard transversality argument on the space of Hamiltonians satisfying conditions (1) and (2) establishes the following lemma.
\begin{lemma}
    The set of admissible Hamiltonian perturbations is a dense open subset of the set of time dependent Hamiltonians $h_t:\mathbb{C}^2 \rightarrow \mathbb{R}$ satisfying conditions (1) and (2) of admissible Hamiltonian perturbations with respect to the $C^\infty$ topology. \qed
\end{lemma}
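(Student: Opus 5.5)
The lemma is a transversality statement for the family $\mathcal{H}$ of time-dependent Hamiltonians $h_t$ satisfying conditions (1) and (2). I would prove it by the standard parametric transversality argument, localised to the region $\{|z-w| \geq \operatorname{Width}_{r,\theta}(\gamma)/2\}$ and the time interval $[0.1,0.9]$.

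\textbf{Reduction to a compact region.} First I check that condition (3) near the diagonal is automatic for every $h_t \in \mathcal{H}$. In the neighbourhood $N=\{|z-w|<\operatorname{Width}_{r,\theta}(\gamma)/2\}$ we have $H_t+h_t=H_t$. Since $H$ depends only on $|z-w|$, the flow $\Phi^t_{H_t}$ preserves $|z-w|$. Hence trajectories starting in $N$ stay in $N$, and there $\Phi^1_{H_t+h_t}=G_{r,\theta}$.

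The clean intersection of $\gamma\times\gamma$ and $G_{r,\theta}(\gamma\times\gamma)$ along $\Delta(\gamma)$ should follow by a direct linear-algebra computation. At $(z,z)$ the map $G_{r,\theta}$ fixes the diagonal direction $(v,v)$ and rotates the antidiagonal direction by $e^{-i\theta}$. For $\theta\in(0,\pi)$ the tangent spaces then intersect exactly in $T\Delta(\gamma)$.

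By the definition of width, there are no other intersection points with $|z-w|<\operatorname{Width}_{r,\theta}(\gamma)$. Taking a slightly smaller neighbourhood to absorb the case of small perturbations, it suffices to achieve transversality at intersection points lying in the compact set $K=(\gamma\times\gamma)\cap\{|z-w|\geq \operatorname{Width}/2\}$.

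\textbf{Openness.} Transverse intersection of two compact submanifolds along the compact set $K$ is stable under $C^1$-small perturbation of $\Phi^1_{H_t+h_t}$. The time-1 map depends continuously on $h_t$ in the $C^\infty$ topology, at least on the bounded region reached by trajectories from $\gamma\times\gamma$. So the admissible set is open.

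\textbf{Density.} I consider the evaluation map
\[
\mathcal{E}:\mathcal{H}\times(\gamma\times\gamma)\to\mathbb{C}^2,\qquad (h,p)\mapsto \Phi^1_{H_t+h_t}(p),
\]
restricted to points $p$ whose trajectory leaves $N$. I would show that $\mathcal{E}$ is a submersion onto a neighbourhood of $\gamma\times\gamma$ at such points. Given a trajectory $x(t)$ that stays outside $\overline{N'}$ (a slightly smaller tube) for $t\in[0.1,0.9]$, take variations $\delta h_t$ supported near $x(t)$ in the region $\{|z-w|>\operatorname{Width}/2\}$ and in times $[0.1,0.9]$. The induced variation of the endpoint is
\[
\int_{0.1}^{0.9} D\Phi_{t}^{1}\, X_{\delta h_t}(x(t))\,dt,
\]
and this realises any tangent vector, since $X_{\delta h_t}(x(t))$ can be chosen arbitrarily.

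By Sard–Smale, applied with $C^k$ perturbations and then passing to $C^\infty$ by the usual Taubes intersection argument, $\mathcal{E}(h,\cdot)$ is transverse to $\gamma\times\gamma$ for a residual set of $h$. Residual together with open gives dense.

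\textbf{Main obstacle.} The delicate step is the trajectories that start outside $N$ but whose endpoints lie near the boundary of $N$. I must make sure no new non-transverse intersections appear in the annular region where perturbations are forbidden. I would handle this using the preserved quantity $|z-w|$ under $H$: for small $h$ this quantity changes only slightly along trajectories. Any new intersection would then have to lie near the original ones, all of which are at distance at least $\operatorname{Width}_{r,\theta}(\gamma)>\operatorname{Width}/2$ from the diagonal, where we have full freedom to perturb.
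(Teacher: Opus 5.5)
Your proposal follows the same route as the paper. The paper notes that conditions (1) and (2) make the clean intersection along $\Delta(\gamma)$ automatic and push all other intersections away from the diagonal, and then cites ``a standard transversality argument''; your Sard--Smale sketch is a correct fleshing-out of exactly that. Your ``main obstacle'' and the smallness-of-$h$ hedges are unnecessary: on $\{|z-w|\le \operatorname{Width}_{r,\theta}(\gamma)/2\}$ we have $X_{H_t+h_t}=X_{H_t}$, which is tangent to the level sets of $|z-w|$, so ODE uniqueness shows a trajectory starting outside this set never enters it. Hence, for every $h_t$ in the space and not just $h_t$ near $0$, each non-constant trajectory is a compact subset of the open region where you may perturb, which gives density at every base point.
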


Recall that we denote by $X_H$ the Hamiltonian vector field associated with the Hamiltonian $H$, uniquely determined by $i_{X_H}\omega \coloneqq \omega(X_H, \cdot) = dH$.

\begin{definition}[Trajectory]
    A \emph{trajectory} $\tau:[0,1] \rightarrow \mathbb{C}^2$ of a Hamiltonian $H$ is an integral curve of the vector field $X_H$, that is $\tau'(t) = X_H \circ \tau(t)$, with $\tau(0), \tau(1) \in \gamma \times \gamma$.
\end{definition}

\noindent Trajectories are flow lines of the vector field, so we have $\tau(1) = \Phi_{H}^1(\tau(0))$ and hence the intersection points of $\gamma \times \gamma$ and $\Phi_{H_t+h_t}^1(\gamma \times \gamma)$ are in one-to-one correspondence with the trajectories of $H_t + h_t$. Moreover, the constant trajectories are exactly those which lie in the diagonal and the non-constant trajectories are disjoint from the diagonal, meaning the non-trivial intersections are in one-to-one correspondence with non-constant trajectories.

An important observation is that any non-constant trajectory of $H_t + h_t$ is not only disjoint from $\Delta(\mathbb{C})$, but remains a distance at least $\operatorname{Width}_{r,\theta}(\gamma)/2$ away from $\Delta(\mathbb{C})$. This is presented as Lemma 2.7 in \cite{Greene-Lobb:Floer-homology}, but we outline the argument here. Let $N$ be an open neighbourhood of the diagonal $\Delta(\mathbb{C})$ defined by $N = \{(z,w) : |z-w| \leq \operatorname{Width}_{r,\theta}(\gamma)/2 \}$. By the definition of an admissible Hamiltonian perturbation $h_t$, within the region $N$ we have $H_t+h_t = H_t$, but any trajectory of $H_t$ (by the definition of $H_t$) stays a fixed distance from $\Delta(\mathbb{C})$. Therefore, any trajectory must remain fully inside or outside $N$. If a trajectory $\tau$ lies entirely within $N$, then it must be a trajectory of $H_t$ and therefore $\tau(0)=(z,w)$ consists of the endpoints of an inscribes isosceles trapezoid of aspect ratio $r$ and angle $\theta$, contradicting the definition of $\operatorname{Width}_{r,\theta}(\gamma)$.

In order to define a Lagrangian Floer chain complex, one typically requires the notion of cappings of trajectories. These are of particular importance when defining the differential, the symplectic action, and the Maslov index.
\begin{definition}[Capping]
    For a trajectory $\tau$, a capping of $\tau$ is a smooth map $\widehat{\tau}:[0,1] \times [0,1] \rightarrow \mathbb{C}^2$ that satisfies
    \begin{equation} \label{eqn:capping}
        \widehat{\tau}(s,0) = \tau(0), \quad \widehat{\tau}(s,1) = \tau(1), \quad \widehat{\tau}(0,t) = \tau(t), \quad \widehat{\tau}(1,t) \in \gamma \times \gamma, \nonumber
    \end{equation}
    for all $s,t \in [0,1]$. Moreover, we denote by $[\widehat{\tau}]$ the homotopy class of $\widehat{\tau}$ with respect to these conditions.
\end{definition}
\noindent Typically, the generators of the Lagrangian Floer chain complex are then simply pairs $(\tau,[\widehat{\tau}])$ where $\tau$ is a trajectory. There are two reasons why we take a slightly different approach.

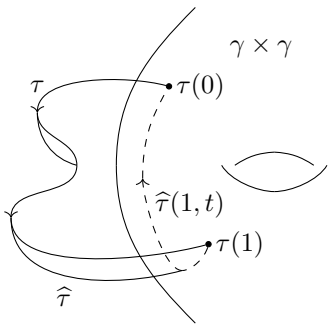
\begin{figure}[h]
    \centering
\begin{tikzpicture}[scale = 0.7]
	\begin{pgfonlayer}{nodelayer}
		\node [style=none] (0) at (1, 3) {};
		\node [style=none] (1) at (-0.5, 0) {};
		\node [style=none] (2) at (1, -3) {};
		\node [style=none] (3) at (1.5, 0) {};
		\node [style=none] (4) at (2.5, -0.5) {};
		\node [style=none] (5) at (3.5, 0) {};
		\node [style=none] (6) at (1.75, 0) {};
		\node [style=none] (7) at (3.25, 0) {};
		\node [style=none] (8) at (2.5, 0.25) {};
		\node [style=new style 2] (9) at (0.5, 1.5) {};
		\node [style=new style 2] (10) at (1.25, -1.5) {};
		\node [style=none] (11) at (-2, 1) {};
		\node [style=none] (12) at (2.25, 2.25) {$\gamma \times \gamma$};
		\node [style=none] (13) at (-2, 1.5) {$\tau$};
		\node [style=none] (14) at (1.1, 1.5) {$\tau(0)$};
		\node [style=none] (15) at (1.85, -1.5) {$\tau(1)$};
		\node [style=none] (16) at (-1.25, 0) {};
		\node [style=none] (17) at (-2.5, -1) {};
		\node [style=none] (18) at (0.75, -2) {};
		\node [style=none] (19) at (0, -0.25) {};
		\node [style=none] (20) at (0.9, -0.75) {$\widehat{\tau}(1,t)$};
		\node [style=none] (21) at (-1.5, -2.5) {$\widehat{\tau}$};
	\end{pgfonlayer}
	\begin{pgfonlayer}{edgelayer}
		\draw [in=180, out=-60, looseness=0.75] (3.center) to (4.center);
		\draw [in=240, out=0, looseness=0.75] (4.center) to (5.center);
		\draw [in=90, out=-135] (0.center) to (1.center);
		\draw [in=135, out=-90] (1.center) to (2.center);
		\draw [bend left=15] (6.center) to (8.center);
		\draw [bend left=15] (8.center) to (7.center);
		\draw [style=new edge style 5, in=90, out=165, looseness=0.75] (9) to (11.center);
		\draw [in=90, out=-90] (11.center) to (16.center);
		\draw [style=new edge style 5, in=90, out=-90] (16.center) to (17.center);
		\draw [in=-165, out=-90, looseness=0.75] (17.center) to (10);
		\draw [in=165, out=-90] (11.center) to (16.center);
		\draw [style=new edge style 0, in=0, out=-105, looseness=0.75] (10) to (18.center);
		\draw [in=-165, out=-90] (17.center) to (18.center);
		\draw [style=new edge style 6, in=-90, out=-180, looseness=0.50] (18.center) to (19.center);
		\draw [style=new edge style 0, in=-120, out=90] (19.center) to (9);
	\end{pgfonlayer}
\end{tikzpicture}
    \caption{An example of a capping $\widehat{\tau}$ of a trajectory $\tau$.}
    \label{fig:action}
\end{figure}

The first is that, as discussed above, the non-constant trajectories of $H_t+h_t$ are in one-to-one correspondence with the non-trivial intersections of $\gamma \times \gamma$ and $\Phi_{H_t+h_t}^1(\gamma \times \gamma)$. In the unperturbed case, that is $h_t = 0$, these in turn correspond to inscribes isosceles trapezoids. Constant trajectories on the other hand correspond to degenerate inscriptions of isosceles trapezoids (an inscription of an isosceles trapezoid that is a single point on the Jordan curve), which we do not wish to include in our homology theory. We therefore only consider generators $(\tau, [\widehat{\tau}])$ where $\tau$ is a non-constant trajectory.

The second is that in the Jordan Floer setting, there is a canonical choice of capping up to homotopy for non-constant trajectories that we call the \emph{preferred capping}.

\begin{definition}[Preferred Capping] \label{def:preferred_capping}
    For any non-constant trajectory $\tau$, a capping $\widehat{\tau}$ is called a \emph{preferred capping} of $\tau$ if it is disjoint from the diagonal, that is, $\operatorname{im}(\widehat{\tau}) \cap \Delta(\mathbb{C}) = \emptyset$.
\end{definition}

\noindent It is important that all preferred cappings belong to the same homotopy class of capping so that they define the same symplectic action. We therefore require the following lemma. 

\begin{lemma}
    Let $\tau$ be a non-constant trajectory, then all preferred cappings $\widehat{\tau}$ belong to the same homotopy class of capping $[\widehat{\tau}]$.
\end{lemma}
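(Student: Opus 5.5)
Two preferred cappings $\widehat{\tau}_0,\widehat{\tau}_1$ of the same non-constant trajectory $\tau$ are homotopic through cappings (rel the boundary conditions \eqref{eqn:capping}) if and only if a certain obstruction class vanishes; we will show that this class lives in a group that is zero once we restrict to maps avoiding $\Delta(\mathbb{C})$. The natural approach is to pass to the coordinates $(u,v)=F_r(z,w)$ of Proposition \ref{prop:rotation_map}, in which $\Delta(\mathbb{C})=\{v=0\}$. Thus $\mathbb{C}^2\setminus\Delta(\mathbb{C})\cong \mathbb{C}\times\mathbb{C}^*$, which deformation retracts onto a circle via $(u,v)\mapsto v/|v|$. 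Similarly, $(\gamma\times\gamma)\setminus\Delta(\gamma)$ is an open annulus, and the map $(z,w)\mapsto (z-w)/|z-w|$ (the image of the second coordinate up to scale) restricted to it is a degree-$1$ map to $S^1$ on its core, a standard fact for Jordan curves: the secant direction winds once as $w$ goes around $\gamma$ near $z$. In fact the complement of the diagonal in $\gamma\times\gamma$ is homotopy equivalent to $S^1$ and the inclusion into $\mathbb{C}^2\setminus\Delta(\mathbb{C})$ induces an isomorphism on $\pi_1$ (the winding of $z-w$ around the annulus core being $\pm1$).

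With this in hand, glue: the two cappings agree along $s=0$ (both equal $\tau$) and on the sides $t=0,1$ (constant), so concatenating $\widehat{\tau}_0$ with the reversal of $\widehat{\tau}_1$ along $\tau$ gives a map of a square (a disc) $D\to\mathbb{C}^2\setminus\Delta(\mathbb{C})$ whose boundary arc $\sigma(t)=\widehat{\tau}_0(1,t)\ast\overline{\widehat{\tau}_1(1,t)}$ lies in $L^\circ:=(\gamma\times\gamma)\setminus\Delta(\gamma)$ (pinched at $\tau(0),\tau(1)$) — more precisely a loop in $L^\circ$ based at $\tau(0)$ obtained from the two boundary paths. The two cappings are homotopic rel conditions iff this relative class in $\pi_2(\mathbb{C}^2\setminus\Delta, L^\circ)$, after allowing the boundary path to slide in $L^\circ$, is trivial in the appropriate sense. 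We use the long exact sequence
\begin{equation}
\pi_2(\mathbb{C}^2\setminus\Delta)\to \pi_2(\mathbb{C}^2\setminus\Delta, L^\circ)\to \pi_1(L^\circ)\to\pi_1(\mathbb{C}^2\setminus\Delta). \nonumber
\end{equation}
Here $\pi_2(\mathbb{C}\times\mathbb{C}^*)=0$, and the last map is injective by the previous paragraph, so $\pi_2(\mathbb{C}^2\setminus\Delta,L^\circ)=0$. Concretely: the loop $\sigma$ bounds a disc in $\mathbb{C}^2\setminus\Delta$, hence is null-homotopic in $\mathbb{C}^2\setminus\Delta$, hence (by injectivity) null-homotopic in $L^\circ$; so the two boundary paths $\widehat{\tau}_i(1,\cdot)$ are homotopic rel endpoints in $L^\circ$. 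Using this homotopy we deform $\widehat{\tau}_1$ so that its boundary path agrees with that of $\widehat{\tau}_0$; the two cappings then form a sphere in $\mathbb{C}^2\setminus\Delta$, which is null-homotopic since $\pi_2=0$, giving the homotopy between $\widehat{\tau}_0$ and $\widehat{\tau}_1$ (indeed even through preferred cappings).

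The main obstacle is the injectivity of $\pi_1(L^\circ)\to\pi_1(\mathbb{C}^2\setminus\Delta(\mathbb{C}))\cong\mathbb{Z}$, i.e. that the secant direction $(z-w)/|z-w|$ has winding number $\pm1$ around the generator of the annulus $L^\circ$. I would prove it by taking the generator to be the loop $t\mapsto(\gamma(t),\gamma(t+\epsilon))$ for small $\epsilon$, whose secant direction is close to the unit tangent of the smooth curve $\gamma$, which has rotation number $\pm1$ by the Umlaufsatz (Hopf's theorem). Since $\mathbb{Z}\to\mathbb{Z}$ given by multiplication by $\pm1$ is injective, this completes the argument.
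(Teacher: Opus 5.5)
Your proof is correct and follows essentially the paper's route. Both arguments reduce to the boundary paths $\widehat{\tau}(1,\cdot)$ in the annulus $(\gamma\times\gamma)\setminus\Delta(\gamma)$ and use that its generating loop winds $\pm1$ around $\Delta(\mathbb{C})$, so the condition of bounding in $\mathbb{C}^2\setminus\Delta(\mathbb{C})$ (the paper's ``winding number $0$'') selects a single class of boundary paths; your Umlaufsatz justification of the $\pm1$ winding and your use of $\pi_2(\mathbb{C}\times\mathbb{C}^*)=0$, which even gives a homotopy through preferred cappings, fill in details the paper leaves implicit.
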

\begin{proof}   
    Recall that non-constant trajectories are disjoint from the diagonal. We note that the homotopy class of a preferred capping $\widehat{\tau}$ is completely determined by the homotopy class of $\widehat{\tau}(1,\cdot):[0,1] \rightarrow (\gamma \times \gamma) \setminus \Delta(\mathbb{\gamma})$ in $(\gamma \times \gamma) \setminus \Delta(\mathbb{\gamma})$ relative to its endpoints. We further note that the homotopy generating loop in $(\gamma \times \gamma) \setminus \Delta(\gamma)$ has winding number $\pm 1$ around $\Delta(\mathbb{C})$, so winding numbers can be changed by $\pm 1$ by concatenation with such curves. The result is then immediate, as the boundary of a preferred capping must have winding number $0$ around $\Delta(\mathbb{C})$.
\end{proof}

\begin{definition}[Jordan Floer complex]
    Let $(\gamma, r, \theta, h_t)$ be an admissible quadruple, then we denote by $\mathcal{G}(\gamma, r, \theta, h_t)$ the set of non-constant trajectories of $H_t + h_t$:
    \begin{equation}
        \tau:[0,1] \rightarrow \mathbb{C}^2, \qquad \tau(0), \tau(1) \in \gamma \times \gamma, \qquad \tau'(t) = X_{H_t + h_t} \circ \tau(t). \nonumber
    \end{equation}
    The \emph{Jordan Floer complex} $\operatorname{JFC}(\gamma, r, \theta, h_t)$ is then defined as the $\mathbb{F}_2$-vector space freely generated by $\mathcal{G}(\gamma, r, \theta, h_t)$.
\end{definition}

\subsection{Pseudo-holomorphic strips}

To define the Floer differential, we must first define \emph{pseudo-holomorphic strips}, which satisfy a Cauchy-Riemann-Floer equation, and then define \emph{moduli spaces} of these pseudo-holomorphic strips. Given an admissible quadruple $(\gamma, r, \theta, h_t)$, we must therefore choose a smooth almost-complex structure $J$ on $\mathbb{C}^2$ compatible with $\omega$. More precisely, we wish to choose a smoothly varying time-dependent almost-complex structure $J_t$ that ensures that the moduli spaces of pseudo-holomorphic strips are \emph{nice} in the sense that they consist of manifolds of the expected dimension.

To introduce these concepts formally, we first introduce the notion of a \emph{strip}, which is simply a smooth map $u: \Sigma \rightarrow \mathbb{C}^2$, where $\Sigma = \mathbb{R} \times [0,1]$ is parametrised by $(s,t)$. We then endow the symplectic manifold $(\mathbb{C}^2, \omega)$ with a smooth \emph{compatible} almost-complex structure $J$, the space of which we denote by $\mathcal{J}(\mathbb{C}^2, \omega)$. We say that $J$ is compatible with $\omega$ if $\omega(\,\cdot\,,J\,\cdot):T_p\mathbb{C}^2 \times T_p\mathbb{C}^2 \rightarrow \mathbb{R}$ defines a Riemannian metric on $\mathbb{C}^2$. This in turn induces a norm $|\cdot|_{J}^2 = \omega(\, \cdot \, , J \, \cdot \,)$ on $T\mathbb{C}^2$, which allows us to define the energy of a strip.

\begin{definition}[Energy of a strip]
    Let $\Sigma = \mathbb{R} \times [0,1]$ be parametrised by $(s,t)$ and $u: \Sigma \rightarrow \mathbb{C}^2$ be a strip in $(\mathbb{C}^2, \omega)$ with the compatible almost-complex structure $J$, then we define its energy by
    \begin{equation}
        E(u) \coloneq \int_\Sigma |\partial_s u|_{J}^2 \; dt \, ds. \nonumber
    \end{equation}
\end{definition}

\noindent The final notion that we require to define pseudo-holomorphic strips is that of a \emph{time-dependent} almost complex structure $J_t$, which are smooth paths $J_t:[0,1] \rightarrow \mathcal{J}(\mathbb{C}^2, \omega)$ of compatible almost-complex structures.

\begin{definition}[Pseudo-holomorphic strip]
We call a smooth map $u: \Sigma \rightarrow \mathbb{C}^2$ a \emph{pseudo-holomorphic strip} with respect to the time-dependent almost-complex structure $J_t \in C^\infty([0,1], \mathcal{J}(\mathbb{C}^2, \omega))$ if it satisfies the following conditions:
\begin{enumerate}[leftmargin=50pt, itemsep=0.5em]
    \item[{\makebox[35pt]{$\operatorname{(BC)}$}} - ] the Lagrangian boundary condition $u(\partial \Sigma) \subset \gamma \times \gamma$,
    \item[{\makebox[35pt]{$\operatorname{(CRF)}$}} - ] the Cauchy-Riemann-Floer equation $\partial_s u +  J_t (\partial_t u - X_{H_t+h_t} \circ  u)  = 0$,
    \item[{\makebox[35pt]{$\operatorname{(BE)}$}} - ] the bounded energy condition $E(u) < \infty$,
    \item[{\makebox[35pt]{$\operatorname{(LIM)}$}} - ] the limiting behaviour $\lim_{s \rightarrow - \infty} u(s,t) = \tau(t)$ and $\lim_{s \rightarrow \infty} u(s,t) = \tau'(t)$, where $\tau$ and $\tau'$ are trajectories of $H_t + h_t$.
\end{enumerate}
\end{definition}

\noindent For a strip $u: \Sigma \rightarrow \mathbb{C}^2$ subject to conditions (BC) and (CRF), the conditions of bounded energy (BE) and limiting behaviour (LIM) are equivalent. 

\begin{definition}[Moduli spaces of pseudo-holomorphic strips]
    We define the \emph{moduli space of pseudo-holomorphic strips}
    \begin{equation}
        \mathcal{M}(\gamma, r, \theta, h_t, J_t) \coloneqq \{u \in C^\infty(\Sigma, \mathbb{C}^2): \operatorname{(BC), (CRF), (BE)} \}. \nonumber
    \end{equation}
    Moreover, we also define the \emph{restricted moduli space of pseudo-holomorphic strips} whose limits are non-constant trajectories
    \begin{equation}
        \mathcal{M}^\circ(\gamma, r, \theta, h_t, J_t) \coloneqq \{u \in \mathcal{M}(\gamma, r, \theta, h_t, J_t): \lim_{s \rightarrow \pm \infty} u(s,t) \in \mathcal{G}(\gamma, r, \theta, h_t) \}, \nonumber
    \end{equation}
    and the \emph{moduli space of diagonal avoiding pseudo-holomorphic strips} whose closures are disjoint from the diagonal
    \begin{equation}
        \mathcal{M}^\Delta(\gamma, r, \theta, h_t, J_t) \coloneqq \{u \in \mathcal{M}(\gamma, r, \theta, h_t, J_t):  u(\overline{\Sigma}) \cap \Delta(\mathbb{C}) = \emptyset \}. \nonumber
    \end{equation}
    Finally, for trajectories $\tau, \tau' \in \mathcal{G}(\gamma, s, \theta, h_t)$, we define the \emph{moduli space of pseudo-holomorphic strips associated to $\tau$ and $\tau'$} by 
    \begin{equation}
        \mathcal{M}^\circ(\tau, \tau';J_t) \coloneq \{u \in \mathcal{M}(\gamma, r, \theta, h_t, J_t) : \lim_{s \rightarrow - \infty} u(s,t) = \tau(t), \; \lim_{s \rightarrow \infty} u(s,t) = \tau'(t) \}, \nonumber
    \end{equation}
    where the dependence on $\gamma$, $r$, $\theta$ and $h_t$ is omitted but implied by the dependence on $\tau$ and $\tau'$. Similarly, we define the \emph{moduli space of diagonal avoiding pseudo-holomorphic strips associated to $\tau$ and $\tau'$}, denoted $\mathcal{M}^\Delta(\tau, \tau';J_t)$, analogously.
\end{definition}

We note that the restricted and diagonal-avoiding moduli spaces, $\mathcal{M}^\circ$ and $\mathcal{M}^\Delta$, admit a free $\mathbb{R}$-action given by the reparametrisation
\begin{equation}
    r \cdot u(s,t) = u(r+s,t) \nonumber
\end{equation}
for $r \in \mathbb{R}$. We will define the differential in terms of counts of pseudo-holomorphic strips up to reparametrisation, motivating the following definition.

\begin{definition}[Unparametrised moduli spaces]
    For a moduli space of pseudo-holomorphic strips $\mathcal{M}$, we define the \emph{unparametrised moduli space of pseudo-holomorphic strips} by $\widehat{\mathcal{M}} = \mathcal{M} / \sim$, where $u_1 \sim u_2$ if and only if $u_1(s,t) = u_2(r+s,t)$ for some $r \in \mathbb{R}$ and all $(s,t) \in \mathbb{R} \times [0,1]$.
\end{definition}

It is also important to note that since constant trajectories necessarily lie on the diagonal we have $\mathcal{M}^\Delta(\gamma, r, \theta, h_t, J_t) \subseteq \mathcal{M}^\circ(\gamma, r, \theta, h_t, J_t)$. To prove that the differential is well defined, we need to show that paths of pseudo-holomorphic strips in $\mathcal{M}^\Delta(\gamma, r, \theta, h_t, J_t)$ limit to pseudo-holomorphic strips or concatenations of pseudo-holomorphic strips also in $\mathcal{M}^\Delta(\gamma, r, \theta, h_t, J_t)$.

\begin{proposition}[Admissible almost-complex structure existence] \label{prop:almost_complex_structures}
    Let $(\gamma, r, \theta, h_t)$ be an admissible quadruple, then there exists a non-empty subset 
    \begin{equation}
        \mathcal{J}_{\operatorname{reg}}(\gamma, r, \theta, h_t) \subset C^\infty ([0,1], \mathcal{J}(\mathbb{C}^2, \omega)) \nonumber
    \end{equation}
    of time-dependent almost-complex structures $J_t \in \mathcal{J}_{\operatorname{reg}}(\gamma, r, \theta, h_t)$ with the following properties:
    \begin{enumerate}[itemsep=0.5em]
        \item for $\tau, \tau' \in \mathcal{G}(\gamma, s, \theta, h_t)$ the restricted moduli space $\mathcal{M}^\circ(\tau, \tau';J_t)$  is a smooth manifold of dimension $\mu(u) \in \mathbb{Z}$, where $\mu(u)$ is the Maslov index of any pseudo-holomorphic strip $u$ in $\mathcal{M}^\circ(\tau, \tau';J_t)$,
        \item we have $J_t(z,w)=J_{\operatorname{std}}$ for $t \in [0,0.1] \cup [0.9,1]$ or $|z-w| \leq \operatorname{Width}_{r, \theta}(\gamma)/2$.
    \end{enumerate}
    Moreover, $\mathcal{J}_{\operatorname{reg}}(\gamma, r, \theta, h_t)$ is of the second category in the subset of the set of time-dependent complex structures $J_t \in C^\infty ([0,1], \mathcal{J}(\mathbb{C}^2, \omega))$ that satisfy (2) with respect to the $C^\infty$ topology. 
\end{proposition}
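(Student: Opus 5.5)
This is a standard Floer--Hofer--Salamon transversality argument, run with the perturbation of $J_t$ confined to the region where condition (2) leaves it free. The underlying principle is that regularity can be achieved by perturbing $J_t$ only on an open set that every relevant strip must enter.

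\emph{Step 1 (Banach manifold setup).} Fix $\ell$ large and let $\mathcal{J}^\ell$ be the Banach manifold of $C^\ell$ time-dependent $\omega$-compatible structures satisfying (2). Its tangent space at $J_t$ consists of sections $Y_t$ with
\[
J_tY_t+Y_tJ_t=0,\qquad \omega(Y_t\cdot,\cdot)+\omega(\cdot,Y_t\cdot)=0,
\]
supported in the open set
\[
U=\{(t,z,w): t\in(0.1,0.9),\ |z-w|>\operatorname{Width}_{r,\theta}(\gamma)/2\}.
\]
For $\tau,\tau'\in\mathcal{G}$, form the universal moduli space $\mathcal{M}^{\mathrm{univ}}(\tau,\tau')$ of pairs $(u,J_t)$ with $u$ in the appropriate $W^{k,p}$ completion satisfying (BC), (CRF) and (LIM). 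The trajectories are nondegenerate by admissibility condition (3), so the linearised operator $D_u$ is Fredholm, with index equal to the Maslov index $\mu(u)$. This gives the standard Fredholm setup.

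\emph{Step 2 (surjectivity of the universal operator).} Its linearisation is $(\xi,Y)\mapsto D_u\xi+Y_t(u)(\partial_t u-X_{H_t+h_t}(u))$. Suppose $\eta$ annihilates the image. Then $D_u^*\eta=0$, and
\[
\int \langle \eta, Y(\partial_t u - X)\rangle=0 \quad \text{for all admissible } Y.
\]
We need points $(s,t)$ where $(t,u(s,t))\in U$, $\partial_s u\neq0$, and $u(s,t)\notin u(\mathbb{R}\setminus\{s\},t)$; these are regular points in the sense of FHS. At such points the standard choice of $Y$ forces $\eta=0$ on an open set, and unique continuation then gives $\eta\equiv0$. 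Sard--Smale applied to the projection onto $\mathcal{J}^\ell$ yields a residual set of regular $J_t$ for each pair $(\tau,\tau')$. Intersecting over the countably many pairs and the values of $\ell$, and applying Taubes' trick, passes to $C^\infty$ and gives second category.

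\emph{Step 3 (the main obstacle).} The difficulty is showing that every nonconstant $u\in\mathcal{M}^\circ(\tau,\tau';J_t)$ actually has regular points inside $U$, since $J_t$ is frozen on $t\in[0,0.1]\cup[0.9,1]$ and near the diagonal. I would argue as follows.

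Both ends $\tau,\tau'$ are nonconstant trajectories. By the earlier observation (Lemma 2.7 of \cite{Greene-Lobb:Floer-homology}), they lie at distance at least $\operatorname{Width}_{r,\theta}(\gamma)/2$ from $\Delta(\mathbb{C})$. Hence for $|s|$ large, $u(s,t)$ lies in the open region $|z-w|>\operatorname{Width}/2$ for all $t$, in particular for $t\in(0.1,0.9)$.

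Next, $\partial_s u$ cannot vanish on an open subset of this region unless $u$ is $s$-independent. This follows from unique continuation for the linear equation satisfied by $\partial_s u$. An $s$-independent $u$ is the trivial strip, which we exclude since the strips counted are nonconstant, so $\tau\neq\tau'$ or $u$ is not $s$-invariant.

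Finally, the FHS argument shows that the set of regular points is open and dense in the region where $\partial_s u\neq0$. To apply it we need the asymptotic injectivity near the ends. This holds because $\tau\neq\tau'$; when the moduli space is nonempty with $\tau=\tau'$ there is only the constant strip, which is regular by nondegeneracy.

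Restricting attention to $s\gg0$ and $t\in(0.1,0.9)$ therefore supplies the required regular points inside $U$. This completes Step 2 and hence the proof.
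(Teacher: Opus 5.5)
Your overall route is the paper's. The paper cites standard transversality (Auroux, Schmäschke) and then the argument of Greene--Lobb's Proposition 2.9 (via the McDuff--Salamon footnote): it suffices to let $J_t$ vary on an open set met by every nonconstant strip. Here that holds because both ends are nonconstant trajectories lying in $\{|z-w|>\operatorname{Width}_{r,\theta}(\gamma)/2\}$, and your Steps 1--3 spell this out.

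There is one false step in Step 3. You assert that when $\tau=\tau'$ the moduli space $\mathcal{M}^\circ(\tau,\tau;J_t)$ contains only the constant strip. That holds for exact Lagrangians, but $\gamma\times\gamma$ is not exact.

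\begin{itemize}
\item For a strip from $\tau$ to itself the Hamiltonian boundary terms cancel and $E(u)=\int u^*\omega$.
\item This equals $\bigl(a(1-r)+br\bigr)\operatorname{Area}(\gamma)$, where $(a,b)\in\mathbb{Z}^2\cong\pi_1(\gamma\times\gamma)$ is the difference of the classes of the two boundary loops. It need not vanish, and the Maslov index is $2(a+b)$.
\item Such strips genuinely occur in $\mathcal{M}^\circ$, which, unlike $\mathcal{M}^\Delta$, allows crossing the diagonal. Gluing a Maslov index $2$ disc $\overline{D}\times\{\mathrm{pt}\}$ (with $D$ the region bounded by $\gamma$) to the constant strip at $\tau$ gives index-$2$ strips from $\tau$ to $\tau$.
\item For $r<1/2$ the class $(1,-1)$ has Maslov index $0$ and energy $(1-2r)\operatorname{Area}(\gamma)>0$. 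This is exactly the kind of strip a regular $J_t$ must exclude.
\end{itemize}

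So, as written, your case split leaves nonconstant $\tau\to\tau$ strips without any supply of regular points.

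The repair is to drop the hypothesis $\tau\neq\tau'$: the Floer--Hofer--Salamon density argument needs only $\partial_s u\not\equiv 0$.
\begin{itemize}
\item Openness uses $u(s_0,t_0)\neq\tau(t_0),\tau'(t_0)$ at the base point, which keeps competing parameters $s'$ in a compact set. This condition holds on a dense open set by unique continuation against the $s$-independent solutions $\tau,\tau'$.
\item The density contradiction comes from the Floer equation. It forces any local relation $u(s,t)=u(\phi(s,t),t)$ to have $\phi=s+c$. Then $u$ is $s$-periodic, and letting $s\to\infty$ shows it is $s$-independent.
\end{itemize}
With that change, your Step 3 applies to every $s$-dependent strip in $\mathcal{M}^\circ(\tau,\tau';J_t)$, and the rest of your argument goes through.
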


\begin{proof}
    With only condition (1), this is a standard result outlined in Auroux's survey \cite[Section 1.4]{Auroux:2013} and dealt with in detail in Schmäschke's thesis \cite[Section 7.3]{Schmäschke:2016}. That condition (2) can be added follows precisely the same argument as in the proof of \cite[Proposition 2.9]{Greene-Lobb:Floer-homology}.
\end{proof}

\subsection{Jordan Floer differential}

We now define the differential on our chain complex, which is defined in terms of counts of connected components of the 1-dimensional moduli spaces. We introduce the notation $\mathcal{M}_i$ to denote the union of all the $i$-dimensional components of the moduli space $\mathcal{M}$.

\begin{definition}[Jordan Floer differential]
    Let $\tau \in \mathcal{G}(\gamma, r, \theta, h_t)$ be a generator of the Jordan Floer complex $\operatorname{JFC}(\gamma, r, \theta, h_t)$ and $J_t \in \mathcal{J}_{\operatorname{reg}}(\gamma, r, \theta, h_t)$, then we define the \emph{Jordan Floer differential} as the linear map $\partial_{(\gamma, r, \theta, h_t, J_t)}: \operatorname{JFC}(\gamma, r, \theta, h_t) \rightarrow \operatorname{JFC}(\gamma, r, \theta, h_t)$, often denoted simply $\partial$, defined by
    \begin{equation}
        \partial(\tau) = \sum_{\tau' \in \mathcal{G}(\gamma, r, \theta, h_t)} \# \widehat{\mathcal{M}_1^\Delta}(\tau, \tau') \cdot \tau'. \nonumber
    \end{equation}
\end{definition}

We now wish to verify that this is a well defined differential for the chain complex, that is, we have $\partial^2 = 0$. The energy of pseudo-holomorphic strips is constant on any given component of the moduli space $\mathcal{M}^\circ(\gamma, r, \theta, h_t, J_t)$. By the Gromov compactness theorem, the moduli spaces $\mathcal{M}^\circ(\gamma, r, \theta, h_t, J_t)$ may therefore be compactified by including limit strips, which may be \emph{broken strips} or include \emph{bubbles}. Since the strips in $\mathcal{M}^\circ(\tau, \tau')$ do not necessarily stay away from the diagonal $\Delta(\mathbb{C})$, a limit strip could break along a constant trajectory. This justifies our definition of the differential in terms of counts of connected components of the moduli spaces $\mathcal{M}_1^\Delta(\tau, \tau')$.

Since $\mathcal{M}^\circ(\tau, \tau')$ can be compactified to $\overline{\mathcal{M}^\circ}(\tau, \tau')$ by the above argument, $\mathcal{M}^\Delta(\tau, \tau') \subseteq \mathcal{M}^\circ(\tau, \tau')$ can be compactified by including limit strips in $\overline{\mathcal{M}^\circ}(\tau, \tau')$. Let $\{u_r\}_{0 \leq r < 1} \subset \mathcal{M}_2^\Delta(\tau, \tau')$ be a continuous path of diagonal avoiding strips, then any limit strip must exist in $\overline{\mathcal{M}_2^\circ}(\tau, \tau')$, which can be broken up into the following five cases, the first four of which we will exclude.

\begin{enumerate}

    \item \textbf{Touching the diagonal}, that is, the limit $u_1 \in \mathcal{M}_2^\circ(\tau, \tau') \setminus \mathcal{M}_2^\Delta(\tau, \tau')$.    
    
    \begin{proof}[\unskip\nopunct]
        This case can be excluded, as the proof of \cite[Proposition 2.1]{Greene-Lobb:Squares-between-graphs} and the first part of the proof of \cite[Lemma 2.16]{Greene-Lobb:Floer-homology} carry over exactly to this setting. The first ensures that $u_1$ does not intersect $\Delta(\mathbb{C})$ on the boundary of its image and the first part of the proof of the second rules out the possibility that $u_1$ intersects $\Delta(\mathbb{C})$ on the interior of its image (without relying on the assumption $\gamma$ is real analytic) under the assumption that there is no boundary intersection with the diagonal.

        We wish to sketch the ideas of these proofs. We first suppose that the set of points mapped to the diagonal $S=\{p \in \Sigma \mid u_1(p) \in \Delta(\mathbb{C}) \}$ had an accumulation point $p \in \Sigma^\circ$. Then by analytic continuation of $v(s,t) = (\Phi_{H_t+h_t}^t)^{-1} \circ u(s,t)$, which are holomorphic maps in a neighbourhood of $\Delta(\gamma)$ (see \cite[Section 2.5]{Greene-Lobb:Floer-homology} for the details of this reparametrisation), we have $u_1(\Sigma) \subset \Delta(\mathbb{C})$, giving a contradiction. We may therefore assume that every point $p \in S \cap \Sigma^\circ$ is isolated.

        Let $p \in S \cap \Sigma^\circ$, then since points of $S \cap \Sigma^\circ$ are isolated we can find a loop $L \subset \Sigma \setminus S$ around $p$. The key idea we require from \cite[Lemma 2.16]{Greene-Lobb:Floer-homology} is that the loop $u_1(L)$ has zero winding number around $\Delta(\mathbb{C})$ (which is well defined as $L \subset \Sigma \setminus S$), since each $u_r(L)$ does for $0 \leq r < 1$. However, this contradicts the positivity of such isolated intersection points established in \cite{Ganatra-Pomerleano:2021}.

        We may now assume that $S \subset \partial\Sigma$. Without loss of generality, assume $p = (0,0) \in S$. The idea behind \cite[Proposition 2.1]{Greene-Lobb:Squares-between-graphs} is that in a neighbourhood $N = (-1,1) \times [0,0.1)\subset \Sigma$ (for which $J_t = J_{\operatorname{std}}$ and $X_H = 0$) the map $\pi_d \circ u_1:N_p \rightarrow \mathbb{C}$ (where $\pi_d(z,w) = z-w$ is a projection) is a non-constant map, holomorphic on its interior and $(\pi_d \circ u_1)^{-1}(0) \subseteq (-1,1) \times \{0\}$. If $(\pi_d \circ u_1)^{-1}(0)$ contained an open interval, $\pi_d \circ u_1$ would vanish identically on $N$ by \cite{Chernoff:1967}, giving a contradiction. We may therefore choose an $\varepsilon > 0$ such that $\pi_d \circ u_1(\pm\varepsilon) \neq 0$ and (without loss of generality)
        \begin{equation}
            \arg(T_{u_1(p)} \gamma) - \pi/8 \leq \arg\bigl(\pi_d \circ u_1\bigl((-1,1) \times \{0\}\bigr)\bigr) \leq \arg(T_{u_1(p)} \gamma) + \pi/8. \nonumber
        \end{equation}
        Assume further, again without loss of generality, that $\arg(T_{u_1(p)} \gamma) = \pi/4$. We may then choose a simple smooth closed curve $C \subset N$ such that $(-\varepsilon,\varepsilon) \times \{0\} \subset C$ and the region $R$ bounded by $C$ satisfies $\pi_d \circ u_1(R) \subset Q$, where $Q$ is the first quadrant of the complex plane. Writing $\pi_d \circ u = x+iy$, we note that $x,y > 0$ on $R$ so by the Hopf Lemma the outward normal derivative of $xy$ at $p$ is strictly negative. On the other hand, by the product rule, this derivative is $0$ since $x(p)=y(p)=0$, giving a contradiction.
    \end{proof}
    
    \item \textbf{Diagonal breaking}, that is, the limit $u_1$ is the concatenation $u_1 = u_1^- \# u_1^+$ of two pseudo-holomorphic strips $u_1^- \in \mathcal{M}_1(\tau, \tau'')$, $u_1^+ \in \mathcal{M}_1(\tau'', \tau')$ for some trajectory $\tau'' \notin \mathcal{G}(\gamma, r, \theta, h_t)$.
    
    \begin{proof}[\unskip\nopunct]
        To exclude diagonal breaking we first note that \cite[Lemma 2.17]{Greene-Lobb:Floer-homology} extends immediately to the smooth setting, as the analytic assumption on the Jordan curve is not used. The only possible obstruction to the proof in \cite[Lemma 2.17]{Greene-Lobb:Floer-homology} directly carrying over to the isosceles trapezoid setting is that the Hamiltonian flow $\Phi_{H_t+h_t}^t$ is different. However, under the projection $\pi_d$ the Hamiltonian flow is independent of the aspect ratio $r$ and therefore $\pi_d \circ (\Phi_{H_t+h_t}^1)^{-1}$ is still simply a rotation by the angle $\theta$. This case can then be excluded.

        We now sketch the main ideas this proof. First note that by the arguments for excluding touching the diagonal, we have that $u_1^-(\Sigma)$ and $u_1^+(\Sigma)$ must be disjoint from the diagonal. In particular, we only need to consider the case $\lim_{s\rightarrow\infty} u_1^-(s,t) = \lim_{s\rightarrow-\infty} u_1^+(s,t) = (p,p)$ for some $p \in \gamma$. We can then choose an $R > 0$ sufficiently large so that $u_1^-(D^+)$ and $u_1^+(D^-)$, where $D^+ = [R,\infty) \times [0,1]$ and $D^- = (-\infty,-R] \times [0,1]$, are close enough to the diagonal that $J_t = J_{\operatorname{std}}$ on these regions.

        As we wish to work with holomorphic maps, we introduce the reparametrisations $v_1^+$ and $v_1^-$ of $u_1^+$ and $u_1^-$ respectively, given by
        \begin{equation}
            v_1^\pm(s,t) = (\Phi_{H_t+h_t}^t)^{-1} \circ u_1^\pm(s,t). \nonumber
        \end{equation}
        Similarly, we can reparametrise $u_r$ to obtain $v_r$. See \cite[Section 2.5]{Greene-Lobb:Floer-homology} for details of this reparametrisation, the key results from which are: the reparametrised strips satisfy the Cauchy-Riemann equation
        \begin{equation}
            \partial_s v_1^\pm + J_{\operatorname{std}}(\partial_t v_1^\pm) = 0, \nonumber
        \end{equation}
        they satisfy the boundary conditions
        \begin{eqnarray}
            v_1^-([R,\infty) \times \{0\}) \subset \gamma \times \gamma,  \qquad\quad\;\;  v_1^-([R,\infty) \times \{1\}) \subset (\Phi_{H_t+h_t}^1)^{-1}(\gamma \times \gamma), \nonumber \\
            v_1^+((-\infty,-R] \times \{0\}) \subset \gamma \times \gamma,  \qquad  v_1^+((-\infty,-R] \times \{1\}) \subset (\Phi_{H_t+h_t}^1)^{-1}(\gamma \times \gamma), \nonumber
        \end{eqnarray}
        and if $u_1^\pm(\overline{\Sigma})$ is disjoint from the diagonal then so is $v_1^\pm(\overline{D^\mp})$. The same results hold for appropriately chosen domains of the reparametrisations $v_r$ of $u_r$.
        
        We then study the holomorphic maps $\pi_d \circ v_r^\pm: D^\mp \rightarrow \mathbb{C}$. By considering winding numbers of boundary curves, there exists an $a \in \pi_d \circ v_1^-(\operatorname{int}(D^+))$ and $D_r = [R_r,R_r'] \times [0,1]$ with $a \in \pi_d \circ v_r(D_r)$ such that for all $r$ sufficiently close to 1 we have that $a, 0 \in \mathbb{C}$ are in the same connected component of $\mathbb{C} \setminus \pi_d \circ v_r(\partial D_r)$. It then follows that $v_r(\Sigma)$ intersects the diagonal, giving a contradiction.
    \end{proof}
    
    \item \textbf{Disk bubbling}, that is, the limit $u_1=u_1' \# b$ contains a disk bubble $b:D^2 \rightarrow \mathbb{C}^2$.
    
    \begin{proof}[\unskip\nopunct]
        It is natural to be concerned that the arguments used to exclude disk bubbling in the case of rectangles may not extend to isosceles trapezoids since monotonicity of the Lagrangians is often used to exclude disk bubbling and the Lagrangian tori in the isosceles trapezoid setting are not monotone. However, the arguments of \cite[Lemma 2.19]{Greene-Lobb:Floer-homology} and \cite[Proposition 2.3]{Greene-Lobb:Squares-between-graphs} do not directly rely on the classical monotonicity argument and carry over directly to this setting.
        
        As noted at the beginning of the proof of \cite[Lemma 2.19]{Greene-Lobb:Floer-homology}, it is not possible for a disc bubble $B = b(D^2)$ to be disjoint from the diagonal, since its boundary loop $\partial B \subset (\gamma \times \gamma) \setminus \Delta(\gamma)$ must either bound a disc of zero symplectic area (and hence cannot bound a disc bubble), or have non-zero winding number around the diagonal (and hence $B$ must intersect $\Delta(\mathbb{C})$). Moreover, by the arguments used to exclude touching the diagonal, the resulting strip and disc bubble must be disjoint from the diagonal except (potentially) at the point the disc $B$ bubbles off.

        The arguments of \cite[Proposition 2.3]{Greene-Lobb:Squares-between-graphs} extend immediately to the isosceles trapezoid setting and exclude the possibility of bubbling at the diagonal. The arguments used in the proof of this proposition are similar to those used for excluding touching the diagonal. Namely, suppose that the bubbling occurs at $(p,p) \in \Delta(\gamma)$ and consider the projections $\pi_d \circ u'_1: R_1 \rightarrow \mathbb{C}$ and $\pi_d \circ b:R_2 \rightarrow \mathbb{C}$ for some neighbourhoods $R_1 \subset \Sigma$ of $(u_1')^{-1}(p,p)$ and $R_2 \subset D^2$ of $b^{-1}(p,p)$, with smooth boundary such that both maps are holomorphic on their interior. A contradiction is then obtained by a similar approach to the one outlined for \cite[Proposition 2.1]{Greene-Lobb:Squares-between-graphs} above.
    \end{proof}
    
    \item \textbf{Sphere bubbling}, that is, the limit $u_1=u_1' \# b$ contains a sphere bubble $b:S^2 \rightarrow \mathbb{C}^2$.
    
    \begin{proof}[\unskip\nopunct]
        This case can be excluded due to a well known argument, succinctly summarised in \cite[Lemma 2.18]{Greene-Lobb:Floer-homology}. The idea of the argument is that a sphere bubble implies the existence of a non-constant $J$-holomorphic sphere $b:S^2 \rightarrow \mathbb{C}^2$, which must have positive energy due to the non-constant property, which, in turn, is equal to the symplectic area $\omega([b])$ by the $J$-holomorphic property. However, since $H_2(\mathbb{C}^2) = 0$, we have $\omega([b])=0$ for any sphere $b:S^2 \rightarrow \mathbb{C}^2$.
    \end{proof}
    
    \item \textbf{Good breaking}, that is, the limit $u_1 \in \mathcal{M}_2^\Delta(\tau, \tau')$ or the limit $u_1 = u_1^- \# u_1^+$ is the concatenation of two pseudo-holomorphic strips $u_1^- \in \mathcal{M}(\tau, \tau'')$, $u_1^+ \in \mathcal{M}(\tau'', \tau')$ for some non-constant trajectory $\tau'' \in \mathcal{G}(\gamma, r, \theta, h_t)$.
    
    \begin{proof}[\unskip\nopunct]
        We necessarily land in this case, ensuring the boundary of the two dimensional moduli spaces consists of subsets of products of the one dimensional spaces, namely
        \begin{equation} \label{eqn:moduli_space_inclusion}
            \partial\widehat{\mathcal{M}}_2^\Delta(\tau, \tau'; J_t) \subseteq \bigsqcup_{\tau'' \in \mathcal{G}(\gamma, r, \theta, h_t)} \widehat{\mathcal{M}}_1^\Delta(\tau, \tau''; J_t) \times \widehat{\mathcal{M}}_1^\Delta(\tau'', \tau; J_t). \nonumber
        \end{equation}
        This is the key technical result in showing that the differential squares to zero.
        \renewcommand{\qedsymbol}{}
    \end{proof}
    
\end{enumerate}

\noindent By the gluing theorem for pseudo-holomorphic strips, any broken strip is locally the limit of a unique family of index 2 strips. This provides the reverse inclusion to (\ref{eqn:moduli_space_inclusion}) on the reduced moduli spaces, giving the equality
\begin{equation} \label{eqn:moduli_space_equivalence}
    \partial \widehat{\mathcal{M}}_2^\Delta(\tau, \tau'; J_t) = \bigsqcup_{\tau'' \in \mathcal{G}(\gamma, r, \theta, h_t)} \widehat{\mathcal{M}}_1^\Delta(\tau, \tau''; J_t) \times \widehat{\mathcal{M}}_1^\Delta(\tau'', \tau'; J_t). \nonumber
\end{equation}

\begin{theorem}[$\partial^2 = 0$]
    Suppose that $(\gamma, r, \theta, h_t, J_t)$ is an admissible tuple, then the boundary operator $\partial = \partial_{(\gamma, r, \theta, h_t, J_t)}$ on $\operatorname{JFC}(\gamma, r, \theta, h_t)$ satisfies $\partial^2 = 0$.
\end{theorem}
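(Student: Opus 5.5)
The plan is to reduce $\partial^2=0$ to the boundary identity for the compactified two-dimensional diagonal-avoiding moduli spaces established just above. Fix generators $\tau,\tau'\in\mathcal{G}(\gamma,r,\theta,h_t)$. By definition of the differential, the coefficient of $\tau'$ in $\partial^2(\tau)$ is
\begin{equation}
    \sum_{\tau''\in\mathcal{G}(\gamma,r,\theta,h_t)} \#\widehat{\mathcal{M}}_1^\Delta(\tau,\tau'';J_t)\cdot\#\widehat{\mathcal{M}}_1^\Delta(\tau'',\tau';J_t) \pmod 2, \nonumber
\end{equation}
which is the mod $2$ cardinality of the disjoint union of products appearing on the right-hand side of the equality for $\partial\widehat{\mathcal{M}}_2^\Delta(\tau,\tau';J_t)$. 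It therefore suffices to show that this boundary is the boundary of a compact one-dimensional manifold, since such a manifold has an even number of boundary points.

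The key steps are as follows. First, by Proposition \ref{prop:almost_complex_structures}, since $J_t\in\mathcal{J}_{\operatorname{reg}}$, the spaces $\mathcal{M}^\circ_i(\tau,\tau';J_t)$ are smooth manifolds of dimension $i$, with a free $\mathbb{R}$-action. Hence $\widehat{\mathcal{M}}^\circ_1$ is zero-dimensional and $\widehat{\mathcal{M}}^\circ_2$ is one-dimensional. Second, $\mathcal{M}^\Delta$ is an open subset of $\mathcal{M}^\circ$, since avoiding the diagonal on the closure is an open condition given the uniform convergence to the nondegenerate end trajectories, which lie at distance at least $\operatorname{Width}_{r,\theta}(\gamma)/2$ from $\Delta(\mathbb{C})$. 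So $\widehat{\mathcal{M}}^\Delta_2(\tau,\tau')$ is a one-manifold and $\widehat{\mathcal{M}}^\Delta_1(\tau,\tau')$ is a discrete set. Third, finiteness: energy is constant on components and is bounded by the action difference, since with the preferred cappings of Definition \ref{def:preferred_capping} the action is well defined. Gromov compactness together with the exclusion of cases (1)--(4) then shows that $\widehat{\mathcal{M}}^\Delta_1(\tau,\tau'')$ is compact, hence finite. Here the one-dimensional analogue of the case analysis is needed: limits cannot touch the diagonal, break at a constant trajectory, or bubble, and breaking along non-constant trajectories is excluded by index (regularity forbids index-$0$ nonconstant strips after quotienting). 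So the counts in the differential are finite. Fourth, compactification: the case analysis for paths $\{u_r\}\subset\mathcal{M}_2^\Delta$ shows that every end of $\widehat{\mathcal{M}}_2^\Delta(\tau,\tau')$ converges to a broken strip in $\widehat{\mathcal{M}}_1^\Delta\times\widehat{\mathcal{M}}_1^\Delta$ through a non-constant $\tau''$. Combined with the gluing theorem, which gives exactly one end per broken pair, $\overline{\widehat{\mathcal{M}}_2^\Delta}(\tau,\tau')$ is a compact one-manifold with boundary equal to the displayed disjoint union.

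One point needs care, and it is the step I expect to be the main obstacle: verifying that the glued strips near a broken configuration in $\mathcal{M}_1^\Delta\times\mathcal{M}_1^\Delta$ actually lie in $\mathcal{M}^\Delta$, and not merely in $\mathcal{M}^\circ$. I would argue this as follows. The glued strips are $C^0$-close to the broken strip, whose closure is a compact set disjoint from $\Delta(\mathbb{C})$, so for sufficiently large gluing parameter they stay a positive distance from the diagonal. The other compactness issues were already handled by cases (1)--(4). Counting boundary points of each compact component modulo $2$ then gives zero, so every coefficient of $\partial^2(\tau)$ vanishes and $\partial^2=0$.
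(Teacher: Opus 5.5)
Your proposal is correct and follows essentially the same route as the paper: it identifies the coefficient of $\tau'$ in $\partial^2\tau$ with the mod $2$ count of $\partial\widehat{\mathcal{M}}_2^\Delta(\tau,\tau';J_t)$, which comes from the case analysis excluding diagonal touching, diagonal breaking and bubbling together with the gluing theorem, and then uses that a compact one-manifold has an even number of boundary points. The extra points you check are left implicit in the paper, and your arguments for them are sound: openness of $\mathcal{M}^\Delta$, finiteness of the index-one counts via the fixed energy coming from preferred cappings, and the fact that glued strips stay away from the diagonal.
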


\begin{proof}
    From the equality given in (\ref{eqn:moduli_space_equivalence}) we have
    \begin{eqnarray}
        \partial^2(\tau) &=& \sum_{\tau', \tau'' \in \mathcal{G}(\gamma, r, \theta, h_t)} \left( \# \widehat{\mathcal{M}}_1^\Delta(\tau, \tau'';J_t) \right) \left( \# \widehat{\mathcal{M}}_1^\Delta(\tau'', \tau';J_t) \right) \cdot \tau'  \nonumber \\
        &=& \sum_{\tau' \in \mathcal{G}(\gamma, r, \theta, h_t)} \# \left( \partial \widehat{\mathcal{M}}_2^\Delta(\tau, \tau';J_t) \right) \cdot \tau' = 0. \nonumber
    \end{eqnarray}
    Here we use the fact that $\widehat{\mathcal{M}_2^\Delta}(\tau, \tau'')$ is a 1-dimensional manifold and therefore has an even number of points in its boundary and that $\operatorname{JFC}(\gamma, r, \theta, h_t)$ was defined over $\mathbb{F}_2$.
\end{proof}

Having verified that the map $\partial:\operatorname{JFC}(\gamma, r, \theta, h_t) \rightarrow \operatorname{JFC}(\gamma, r, \theta, h_t)$ is a well defined differential, we can now define the Jordan Floer homology of an admissible tuple $(\gamma, r, \theta, h_t, J_t)$.

\begin{definition}[Jordan Floer homology]
    For an admissible tuple $(\gamma, r, \theta, h_t, J_t)$, we define the \textit{Jordan Floer homology}
    \begin{equation}
        \operatorname{JF}(\gamma, r, \theta, h_t, J_t) \coloneq \ker \partial_{(\gamma, r, \theta, h_t, J_t)} / \operatorname{im} \partial_{(\gamma, r, \theta, h_t, J_t)}. \nonumber
    \end{equation}
\end{definition}

We recall that our aim was to show that the chain complex was non-empty, which would be implied by showing that this homology is non-trivial. We achieve this goal through continuation maps.

\subsection{The continuation package}

Currently we require a choice of Hamiltonian perturbation and almost-complex structure in order to define the Jordan Floer homology. In Lagrangian Floer homology, there is a standard procedure for showing that the homology is independent of these choices, which requires chain maps known as \emph{continuation maps} that induce isomorphisms on homology. Similarly to the differential, continuation maps are defined by a count of pseudo-holomorphic strips. In the Jordan Floer setting, we can improve on this. For a given aspect ratio $r$, we can show that the Jordan Floer homology is independent of not only the choice of Hamiltonian perturbation $h_t$ and almost-complex structure $J_t$, but also of the angle $\theta$. A detailed treatment of continuation maps in the more general setting is given in \cite[Chapter 11]{Audin-Damian:2014}.

\subsubsection{Varying the angle}

To show that Jordan Floer homology is independent of the angle $\theta$ we follow \cite[Section 2.7]{Greene-Lobb:Floer-homology}. That is, we define chain homotopy equivalences $\operatorname{JFC}(\gamma, r, \theta^1, h_t^1, J_t^1) \rightarrow \operatorname{JFC}(\gamma, r, \theta^2, h_t^2, J_t^2)$ known as continuation maps. These are defined as counts of $s$-dependent pseudo-holomorphic strips, which are defined analogously to the standard pseudo-holomorphic strips but for the following data:
\begin{enumerate}
    \item We define a strip-dependent Hamiltonian $H_{st}^{12}:\mathbb{C}^2 \rightarrow \mathbb{R}$, which for a smooth monotonic function of the angle $\theta_s: \mathbb{R} \rightarrow [0,1]$, satisfying $\theta^{12}_s = \theta^1$ for $s \leq 0$ and $\theta^{12}_s = \theta^2$ for $s \geq 1$, is given by
    \begin{equation}
        H_{st}^{12}(z,w) = \frac{1}{2}r(1-r)\theta_s^{12} |z-w|^2. \nonumber
    \end{equation}
    \item We introduce a strip-dependent Hamiltonian perturbation $h_{st}^{12}: \mathbb{C}^2 \rightarrow \mathbb{R}$ to ensure transversality, which we require to satisfy $h_{st}^{12} = h_t^1$ for $s \leq -S_1$, $h_{st}^{12} = h_t^2$ for $s \geq S_1$, $h_{st}^{12} = 0$ for $t \notin (0.1,0.9)$, and all partial derivatives of $h_{st}^{12}$ up to infinite order vanish at $\Delta(\mathbb{C})$.
    \item We choose a smooth path of almost-complex structures $J_{st}^{12}$, which is admissible for the Hamiltonian $H_{st}^{12}+h_{st}^{12}$, that satisfies $J_{st}^{12} = J_t^1$ for $s \leq -S_2$, $J_{st}^{12} = J_t^2$ for $s \geq S_2$, and $J_{st}^{12} = J_{\operatorname{std}}$ for $t \notin (0.1,0.9)$ or $(z,w) \in \mathbb{C}^2$ sufficiently close to the diagonal.
\end{enumerate}
The continuation maps are then defined in a similar way to the differential. More specifically, with respect to the bases $\mathcal{G}(\gamma, r, \theta^1, h_t^1)$ and $\mathcal{G}(\gamma, r, \theta^2, h_t^2)$, as counts of diagonal avoiding pseudo-holomorphic strips between trajectories in these bases. 

To be more precise, we define the moduli space of $s$-dependent pseudo-holomorphic strips limiting to trajectories $\tau_1 \in \mathcal{G}(\gamma, r, \theta^1, h_t^1)$ and $\tau_2 \in \mathcal{G}(\gamma, r, \theta^2, h_t^2)$ by
\begin{equation}
    \mathcal{M}^\circ(\tau_1, \tau_2;h_{st},J_{st}) \coloneq \Big\{u \in \mathcal{M}(\gamma, r, \theta_s, h_{st}, J_{st}) : \lim_{s \rightarrow - \infty} u(s,t) = \tau(t), \; \lim_{s \rightarrow \infty} u(s,t) = \tau'(t) \Big\}. \nonumber
\end{equation}
For \emph{admissible} choices of Hamiltonian perturbation $h_{st}$ and almost-complex structure $J_{st}$, each  of these moduli spaces has the structure of a manifold of dimension equal to the Maslov index of any strip in that connected component. 

That generic choices of $h_{st}$ satisfying (2) are admissible in the above sense for any fixed $J_{st}$ satisfying (3), without the restriction that all partial derivatives of $h_{st}$ up to infinite order vanish at the diagonal, is a standard result and can be found in \cite[Theorem 11.1.6]{Audin-Damian:2014}. The fact that this restriction can be added follows an argument similar to that used to prove the moduli spaces associated to the differential are manifolds (recall Proposition \ref{prop:almost_complex_structures}). More precisely, the argument for only requiring $J_t$ to vary in an open set that intersects the image of every non-constant pseudo-holomorphic strip used in the proof of \cite[Proposition 2.9]{Greene-Lobb:Floer-homology} (see the footnote of \cite[Proposition 3.4.1]{McDuff-Salamon:1994} for more details) can be applied in this setting to justify that we can fix $h_{st}$ in a region whose complement is an open set which intersects the image of every non-constant $s$-dependent pseudo-holomorphic strip in the $s$-dependent region. To see that this is satisfied by the region $\Delta(\mathbb{C})$, simply note that if the $s$-dependent region was fully contained in $\Delta(\mathbb{C})$, by analytic continuation, the whole strip must be contained in $\Delta(\mathbb{C})$ and therefore cannot limit to non-constant trajectories. 

Since we only wish to consider diagonal avoiding strips, we in turn define the moduli space of diagonal avoiding strips as the subspace
\begin{equation}
    \mathcal{M}^\Delta(\tau_1, \tau_2; h_{st}, J_{st}) \coloneq \Big\{ u \in \mathcal{M}^\circ(\tau_1, \tau_2;h_{st},J_{st}) : u(\overline{\Sigma}) \cap \Delta(\mathbb{C}) = \emptyset \Big\}. \nonumber
\end{equation}
The continuation maps are then defined analogously to the differential, with the counts of 1-dimensional connected components of $\mathcal{M}^\Delta(\tau_1, \tau_2; h_t, J_t)$ replaced by counts of 0-dimensional connected components of $\mathcal{M}^\Delta(\tau_1, \tau_2; h_{st}, J_{st})$.

\begin{definition}[Continuation maps]
    Let $\tau_1 \in \mathcal{G}(\gamma, r, \theta^1, h^1_t)$, then we define the \emph{continuation map} for the data $(\theta_s, h_{st}, J_{st})$ as the linear map $\Phi_{(\gamma, r, \theta_s, h_{st}, J_{st})}: \operatorname{JFC}(\gamma, r, \theta^1, h^1_t) \rightarrow \operatorname{JFC}(\gamma, r, \theta^2, h^2_t)$, often denoted simply $\Phi$ by
    \begin{equation}
        \Phi(\tau_1) = \sum_{\tau_2 \in \mathcal{G}(\gamma, r, \theta^2, h^2_t)} \# \mathcal{M}_0^\Delta(\tau_1, \tau_2;h_{st}, J_{st}) \cdot \tau_2. \nonumber
    \end{equation}
\end{definition}

Verifying that such a map defines a chain map requires analysing the limits of sequences of pseudo-holomorphic strips in the 1-dimensional components of the moduli spaces $\mathcal{M}_1^\Delta(\tau_1, \tau_2;h_{st}, J_{st})$. This is similar to verifying the differential for Jordan Floer homology is well defined and squares to zero. More precisely, we must verify that the limits of sequences of pseudo-holomorphic strips $u_r \in \mathcal{M}_1^\Delta(\tau_1, \tau_2;h_{st}, J_{st})$ avoid the diagonal and do not include bubbles. The previous arguments for no diagonal touching, disc bubbling, or sphere bubbling carry over to this setting, but to extend the arguments for no breaking at the diagonal we must take more care.

One obstruction to the immediate extension of the argument is that a pseudo-holomorphic strip $u \in \mathcal{M}_1^\Delta(\tau_1, \tau_2;h_{st}, J_{st})$ may limit (as $s \rightarrow \pm \infty$) to a trajectory of a Jordan Floer complex with a different angle. We recall that our definition of being \emph{close} to the diagonal as $|z-w| \leq \operatorname{Width}_{r, \theta}(\gamma)$ depended on $\theta$, which in the current setting is no longer constant. We hence introduce a new notion of width:

\begin{definition}[Theta width]
    The theta width of a smooth Jordan curve $\gamma$, denoted $\operatorname{Width}_r^\theta(\gamma)$ is the infimal diagonal length of an inscribed isosceles trapezoid of aspect ratio $r$ and angle $\phi \in (0, \theta)$.
\end{definition}

\noindent With this new notion of width, we take $\theta = \max \{ \theta_1, \theta_2 \}$ and replace $\operatorname{Width}_{r, \theta}(\gamma)$ with $\operatorname{Width}_r^\theta(\gamma)$ in all corresponding definitions. 

The other obstruction is that some of the arguments require a slight modification to deal with $s$-dependent pseudo-holomorphic strips. To be more explicit, using the same arguments for excluding diagonal breaking as before, we may restrict ourselves to the case where the broken strip $u_1 = u_1^- \# u_1^+$ only intersects the diagonal at the point it breaks. The broken strip must therefore consist of either
\begin{equation}
    u_1^- \in \mathcal{M}_0(\tau_1^-,\tau_2^-; \theta_s, h_{st},J_{st}) \qquad \text{or} \qquad u_1^+ \in \mathcal{M}_0(\tau_1^+,\tau_2^+; \theta_s, h_{st},J_{st}), \nonumber
\end{equation}
for some constant trajectory $\tau_2^- \in \Delta(\mathbb{C})$ or $\tau_1^+ \in \Delta(\mathbb{C})$ respectively, which the previous arguments do not directly deal with. In the setting of Jordan Floer homology for inscriptions of rectangles in analytic Jordan curves \cite[Lemma 2.22]{Greene-Lobb:Floer-homology} extends the arguments of \cite[Lemma 2.17]{Greene-Lobb:Floer-homology} to $s$-dependent pseudo-holomorphic strips. With the new notion of width in place, \cite[Lemma 2.22]{Greene-Lobb:Floer-homology} extends to our setting by the same observations that extend \cite[Lemma 2.17]{Greene-Lobb:Floer-homology} to this setting.

By the above arguments, we are in the case of \emph{good breaking}. More precisely, for a sequence of strips $u_r \in \mathcal{M}_1^\Delta(\tau_1, \tau_2;h_{st}, J_{st})$ for $r \in [0,1)$ limiting to a strip $u_1 \notin \mathcal{M}_1^\Delta(\tau, \tau';h_{st}, J_{st})$, the limit is necessarily a concatenation $u_1 = u_1^- \# u_1^+$ of pseudo-holomorphic strips. There are two possibilities for such strips: 
\begin{enumerate}[itemsep = 0pt]
    \item $u_1^- \in \mathcal{M}_1^\Delta(\tau_1, \tau_1'; \theta^1, h^1_t, J^1_t)$ and $u_1^+ \in \mathcal{M}_0^\Delta(\tau_1', \tau_2; \theta_s, h_{st}, J_{st})$ for some $\tau_1' \in \mathcal{G}(\gamma, r, \theta^1, h_t^1)$,
    \item $u_1^- \in \mathcal{M}_0^\Delta(\tau_1, \tau_2'; \theta_s, h_{st}, J_{st})$ and $u_1^+ \in \mathcal{M}_1^\Delta(\tau_2', \tau_2; \theta^2, h^2_t, J^1_t)$ for some $\tau_2' \in \mathcal{G}(\gamma, r, \theta^2, h_t^2)$.
\end{enumerate}
Along with the gluing theorem for pseudo-holomorphic strips, this establishes the equality on the moduli spaces of pseudo-holomorphic strips
\begin{eqnarray}
    \partial \mathcal{M}_1^\Delta(\tau_1, \tau_2; \theta_s, h_{st}, J_{st}) &=& \bigsqcup_{\tau_1' \in \mathcal{G}(\gamma, r, \theta^1, h_t^1)} \widehat{\mathcal{M}}_1^\Delta(\tau_1, \tau_1'; \theta^1, h^1_t, J^1_t) \times \mathcal{M}_0^\Delta(\tau_1', \tau_2; \theta_s, h_{st}, J_{st}) \nonumber \\
    && \quad \sqcup \bigsqcup_{\tau_2' \in \mathcal{G}(\gamma, r, \theta^2, h_t^2)} \mathcal{M}_0^\Delta(\tau_1, \tau_2'; \theta_s, h_{st}, J_{st}) \times \widehat{\mathcal{M}}_1^\Delta(\tau_2', \tau_2; \theta^2, h^2_t, J^1_t). \nonumber
\end{eqnarray}
Since $\mathcal{M}_1^\Delta(\tau_1, \tau_2; \theta_s, h_{st}, J_{st})$ is a one-dimensional manifold, recalling that the Jordan Floer chain complex was defined over $\mathbb{F}_2$ coefficients, this establishes $\Phi \circ \partial^1 + \partial^2 \circ \Phi = 0$.

Verifying that the continuation maps are chain homotopy equivalences requires studying the moduli space of homotopies of $s$-dependent pseudo-holomorphic strips 
\begin{equation}
    \mathcal{M}^\Delta \left(\tau_1, \tau_2; \left\{h_{st}^\lambda, J_{st}^\lambda \right\}_\lambda \right) \coloneq \Big\{ (\lambda, u):0 \leq \lambda \leq 1, u \in \mathcal{M}^\Delta \left(\tau_1, \tau_2; h_{st}^\lambda,J_{st}^\lambda \right) \Big\}, \nonumber
\end{equation}
where $\lambda \mapsto h_{st}^\lambda$ and $\lambda \mapsto J_{st}^\lambda$ are homotopies of admissible Hamiltonians and almost-complex structures respectively. More precisely, we are interested in the boundaries of the 1-dimensional connected components of this moduli space $\mathcal{M}_1^\Delta(\tau_1, \tau_2; \{h_{st}^\lambda, J_{st}^\lambda\}_\lambda)$, which is a 1-dimensional manifold for generic homotopies $\lambda \mapsto h_{st}^\lambda$ and $\lambda \mapsto J_{st}^\lambda$ (see \cite[Section 11.3]{Audin-Damian:2014} for example). Moreover, since we have already verified that limits of strips in $\mathcal{M}^\Delta \left(\tau_1, \tau_2; h_{st}^\lambda,J_{st}^\lambda \right)$ avoid the diagonal, so do limits of strips in $\mathcal{M}_1^\Delta(\tau_1, \tau_2; \{h_{st}^\lambda, J_{st}^\lambda\}_\lambda)$ ensuring \emph{good breaking}.

The relevant chain homotopy $h:\operatorname{JFC}(\gamma, r, \theta^1,h_t^1) \rightarrow \operatorname{JFC}(\gamma, r, \theta^1,h_t^1)$ can then be defined as counts of 0-dimensional connected components of these moduli spaces
\begin{equation}
    h(\tau_1) = \sum_{\tau_2 \in \mathcal{G}(\gamma,r,\theta^1,h_t^1)} \# \mathcal{M}_0^\Delta \left(\tau_1, \tau_2; \left\{ h_{st}^\lambda, J_{st}^\lambda \right\} \right) \cdot \tau_2. \nonumber
\end{equation}
Since $\mathcal{M}_1^\Delta(\tau_1, \tau_2; \{h_{st}^\lambda, J_{st}^\lambda\}_\lambda)$ is a 1-dimensional manifold, it has an even number of boundary components. This is sufficient to conclude that for any pair of continuation maps $\Phi^1:\operatorname{JFC}(\gamma, r, \theta^1, h^1_t) \rightarrow \operatorname{JFC}(\gamma, r, \theta^2, h^2_t)$ and $\Phi^2: \operatorname{JFC}(\gamma, r, \theta^2, h^2_t) \rightarrow \operatorname{JFC}(\gamma, r, \theta^1, h^1_t)$ we have 
\begin{equation}
    \Phi^2 \circ \Phi^1 + \operatorname{id}_1 + \;\partial_1 \circ h + h \circ \partial_1 = 0 \nonumber
\end{equation}
over $\mathbb{F}_2$, where $\operatorname{id}_1$ and $\partial_1$ are the identity and the differential on the chain complex $\operatorname{JFC}(\gamma, r, \theta^1, h^1_t)$ respectively. This verifies continuation maps induce isomorphisms between $\operatorname{JF}(\gamma,r, \theta, h_t,J_t)$ for different angles $\theta$, Hamiltonians $h_t$, and almost-complex structures $J_t$.

\subsubsection{Morse Homology limit}

We consider the limit of Jordan Floer homology $\operatorname{JF}_*(\gamma, r, \theta)$ as $\theta \rightarrow 0$. This was done for the case of rectangles in \cite{Greene-Lobb:Floer-homology}, following an argument of Oh \cite{Oh:1996} that we begin by reviewing. 

Consider a symplectic manifold $(M, \omega)$, a Lagrangian $L \subset M$, and a sequence of admissible Hamiltonians $H_n: M \rightarrow \mathbb{R}$ where $H_n = \tfrac{1}{n} f$ for some $f: M \rightarrow \mathbb{R}$. Let $N$ be a Weinstein neighbourhood of $L$ in $M$, then for $H_n$ sufficiently small $L'=\Phi_{H_n}^1(L)$ is contained in $N$. Moreover, there exists a small energy $e > 0$ such that the strips counted by the differential on the Floer chain complex $CF_*(L,L';J_t)$ of energy $E(u) < e$ are in one-to-one correspondence with Morse trajectories counted by the differential on the Morse chain complex $CM_*(L, f|_{L} + h,g)$ by the assignment $u \mapsto u(s,0)$ (for some suitable choice of perturbation $h:L \rightarrow \mathbb{R}$ such that $f|_{L} + h$ is a Morse function, metric $g:L \times L \rightarrow \mathbb{R}$, and almost-complex structure $J_t$).

In the Jordan Floer setting, since we wish to consider the limit $\theta \rightarrow 0$ for $H_t = \tfrac{1}{2} \theta r(1-r) |z-w|^2$, we take $f = |z-w|^2$. The key difference in the Jordan Floer setting is that the assignment $u \mapsto u(s,0)$ puts low energy strips counted by the Jordan Floer differential (diagonal avoiding strips of Maslov index 1) in one-to-one correspondence with Morse trajectories between non-degenerate critical points that are counted by the differential on the Morse-Bott chain complex $CM_*(\gamma \times \gamma, \Delta(\gamma), f|_{\gamma \times \gamma} + h, g)$ for a suitable choice of perturbation $h$ and metric $g$ on $\gamma \times \gamma$ as above. This was shown in the case of rectangles in \cite[Theorem 2.23]{Greene-Lobb:Floer-homology}.

The argument extends without modification to the case of isosceles trapezoids. Geometrically, this extension is expected: as $\theta_n \rightarrow 0$, convergent sequences of inscribed isosceles trapezoids $T_{\theta_n}$ of a fixed aspect ratio $r$ in $\gamma$ tend to binormals of $\gamma$. This holds regardless of the choice of $r$ and therefore we expect the limiting Morse-Bott chain complex to be independent of the choice of aspect ratio. We note that the homology of this Morse-Bott chain complex is given by
\begin{equation}
    H_*\left(\operatorname{CM}_*(\gamma \times \gamma, \Delta(\gamma),  f|_{\gamma \times \gamma}+h, g)\right) \cong H_*\left(S^1 \times S^1, \Delta(S^1); \mathbb{F}_2\right) \cong (\mathbb{F}_2)_{(2)} \oplus (\mathbb{F}_2)_{(1)}, \nonumber
\end{equation}
the Morse-Bott homology of a torus relative to its diagonal curve. By the continuation package for varying $\theta$ we obtain the Jordan Floer homology associated with inscriptions of isosceles trapezoids for any values of $r$ and $\theta$.

\begin{theorem}[Isomorphism Type]
    The Jordan Floer homology associated with inscribed isosceles trapezoids of aspect ratio $r$ and angle $\theta$ is given by
    \begin{equation}
        \operatorname{JF}_*(\gamma, r, \theta) \coloneqq H_*\big(\operatorname{JFC}_*(\gamma, r, \theta, h_t, J_t)\big) \cong (\mathbb{F}_2)_{(2)} \oplus (\mathbb{F}_2)_{(1)}. \nonumber
    \end{equation}
\end{theorem}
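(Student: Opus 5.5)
The plan is to combine the two structural results already in place: invariance of $\operatorname{JF}_*(\gamma,r,\theta,h_t,J_t)$ under continuation maps, and the identification of the small-angle complex with a Morse--Bott complex.

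First, fix $r\in(0,1/2]$ and an arbitrary $\theta\in(0,\pi)$. The continuation package gives isomorphisms
\[
\operatorname{JF}_*(\gamma,r,\theta,h_t,J_t)\cong\operatorname{JF}_*(\gamma,r,\theta',h'_t,J'_t)
\]
for any other admissible data with angle $\theta'\in(0,\pi)$. These isomorphisms come from the chain maps $\Phi^1,\Phi^2$ together with the chain homotopy $h$. To keep the gradings intact, I will check that the $0$-dimensional moduli spaces $\mathcal{M}^\Delta_0$ used to define $\Phi$ consist of index-$0$ strips. Indices are measured with respect to the preferred cappings of Definition \ref{def:preferred_capping}, so $\Phi$ preserves the Maslov grading. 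Here one uses that gluing a diagonal-avoiding strip to a preferred capping produces a preferred capping, since the glued capping still misses $\Delta(\mathbb{C})$. This reduces the statement to computing the homology for a single, arbitrarily small angle $\theta$.

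Second, I will run Oh's argument with $f=|z-w|^2$, so that $H=\tfrac12\theta r(1-r)f$, and choose a Morse perturbation $h$ of $f|_{\gamma\times\gamma}$ vanishing near $\Delta(\gamma)$. For $\theta$ sufficiently small, the generators $\mathcal{G}(\gamma,r,\theta,h_t)$ correspond bijectively, with matching gradings, to the non-degenerate critical points of $f|_{\gamma\times\gamma}+h$ off the diagonal. By the low-energy correspondence $u\mapsto u(s,0)$ described above, the index-$1$ diagonal-avoiding strips of energy below $e$ correspond to Morse trajectories counted in $\operatorname{CM}_*(\gamma\times\gamma,\Delta(\gamma),f|_{\gamma\times\gamma}+h,g)$.

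Third, I will show that all strips counted by the differential have energy below $e$. The energy of a strip equals the action difference of its endpoints, computed with preferred cappings. For small $\theta$ every such action is $O(\theta)$, because it is bounded by $\theta\,r(1-r)\operatorname{diam}(\gamma)^2$ plus the area swept by a preferred capping that stays close to $\gamma\times\gamma$. Taking $\theta$ small therefore makes the two differentials agree, giving
\[
\operatorname{JF}_*\cong H_*(S^1\times S^1,\Delta(S^1);\mathbb{F}_2).
\]
The long exact sequence of the pair, in which $H_1(\Delta)\to H_1(T^2)$ is injective, then yields $\mathbb{F}_2$ in degrees $2$ and $1$ and zero otherwise.

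The main obstacle is the third step. The constant $e$ and the Weinstein neighbourhood must be chosen uniformly as $\theta\to0$, but the admissibility condition ties $h_t$ to $\operatorname{Width}_{r,\theta}(\gamma)$, which may shrink with $\theta$. My first approach is to use $\operatorname{Width}^{\theta_0}_r(\gamma)$ for a fixed $\theta_0$. Since small-angle trapezoids converge to binormals, this width is bounded below by roughly the minimal binormal length, which should give a $\theta$-independent neighbourhood of the diagonal on which everything is unperturbed. The argument then follows \cite[Theorem 2.23]{Greene-Lobb:Floer-homology} verbatim. There, only the projection $\pi_d$, which is independent of $r$, interacts with the diagonal.
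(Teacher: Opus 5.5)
Your proposal is correct and follows essentially the same route as the paper. Continuation maps reduce the computation to small $\theta$, where Oh's argument as in \cite[Theorem 2.23]{Greene-Lobb:Floer-homology} identifies the complex with the Morse--Bott complex of $|z-w|^2$ on $\gamma\times\gamma$ relative to $\Delta(\gamma)$, whose homology is $H_*(S^1\times S^1,\Delta(S^1);\mathbb{F}_2)$. Your fix for the uniformity issue is exactly the paper's theta-width $\operatorname{Width}^{\theta_0}_r(\gamma)$; however, its positivity is better justified by the curvature argument of Lemma~\ref{lemma:elegant_in_limit} (inscriptions shrinking to a point force the angle to $\pi$) than by convergence to binormals, which presupposes that small-angle inscriptions do not shrink out.
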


\subsubsection{Varying the aspect ratio}

Since Jordan Floer homology is independent (up to isomorphism) of the aspect ratio $r$ and the angle $\theta$, it is natural to hope for a continuation package for varying $r$. The main difficulty when considering varying $r$ is that the symplectic form depends on $r$. It is therefore unclear how to define continuation maps as counts of pseudo-holomorphic strips, as to define pseudo-holomorphic strips we first must make a choice of symplectic form.

\section{The symplectic action and spectral invariants}

The Jordan Floer homology has an integer homological grading, supported in degrees 1 and 2, called the Maslov index. The Jordan Floer chain complex also has a real filtration called the action. This lifts to a filtration grading for each non-zero homology class $\alpha \in \operatorname{JF}(\gamma, r, \theta)$, called the spectral invariant of $\alpha$. Recalling that Jordan Floer homology has a single homology class in each of the degrees it is supported, we obtain two spectral invariants $l_1(\gamma, r, \theta)$ and $l_2(\gamma, r, \theta)$. We can use these spectral invariants to obstruct \emph{shrinkout} (see Section \ref{sec:shrinkout}).

To introduce the symplectic action, we first introduce the space on which it is defined. We define the path space $\Omega(\mathbb{C}^2, \gamma \times \gamma)$ as the set of paths $\sigma:[0,1] \rightarrow \mathbb{C}^2$ with endpoints in $\gamma \times \gamma$. Unfortunately, the notion of a preferred capping (see Definition \ref{def:preferred_capping}) cannot be extended to all such paths. To deal with this problem, it is natural to introduce the diagonal avoiding path space \begin{equation}
    \widetilde{\Omega}(\mathbb{C}^2, \gamma \times \gamma) = \Omega\bigl(\mathbb{C}^2 \setminus \Delta(\mathbb{C}), (\gamma \times \gamma) \setminus \Delta(\gamma)\bigr) \nonumber
\end{equation}
as the diagonal avoiding subset of $\Omega(\mathbb{C}^2, \gamma \times \gamma)$. The key observation is that the definition of a preferred capping can be extended to any diagonal avoiding path, and therefore we can define the symplectic action on this space.

\begin{definition}[Symplectic action]
    For a path $\sigma \in \widetilde{\Omega}(\mathbb{C}^2, \gamma \times \gamma)$ we define its action with respect to the Hamiltonian $H_t$ by
    \begin{equation}
        \mathcal{A}_{H_t}(\sigma) = \int_0^1 H_t \circ \sigma(t) \; dt - \int_{[0,1]^2} \widehat{\sigma}^* \omega, \nonumber
    \end{equation}
    where $\widehat{\sigma}$ is a \emph{preferred capping} of $\sigma$.
\end{definition}

The action functional $\mathcal{A}_{H_t}$ plays the role of a Morse function on the path space $\widetilde{\Omega}(\mathbb{C}^2, \gamma \times \gamma)$. Its critical points correspond precisely to the non-constant trajectories of the Hamiltonian vector field $X_{H_t}$. In fact, an alternative approach to Floer homology is to attempt to apply Morse theory on the path space, using the action functional as the analogue of a Morse function. This approach is outlined by Oh in \cite[Chapter 12]{Oh:2015} and requires the Hamiltonian to be \emph{regular}, similarly to how we required the Hamiltonian to be \emph{admissible} to ensure transverse intersections of the Lagrangians.

Assuming $H_t$ is regular, the Floer differential lowers the action, filtering the Floer chain complex. The spectral invariants are then defined as the filtration grading of a given homology class in the induced filtration on the Floer homology. In the case where $H_t$ is not regular, spectral invariants can still be obtained by considering limits of sequences of spectral invariants associated with regular Hamiltonians $H_t^n$ limiting to $H_t$. Moreover, such spectral invariants still represent actions of trajectories of the Hamiltonian vector field, however, these trajectories may not correspond to transverse intersections of the Lagrangians. A detailed introduction to spectral invariants is provided in \cite{Leclercq-Zapolsky:2015}.

\subsection{Actions of inscribed isosceles trapezoids} \label{sec:actions}

Recall that inscriptions of isosceles trapezoids in a Jordan curve $\gamma$ are in correspondence with non-constant trajectories of the Hamiltonian $H_t$. Given an inscribed isosceles trapezoid $T \in T_{r, \theta}$ with aspect ratio $r \in (0,1/2]$ and angle $\theta \in (0, \pi)$, we can therefore define the action of $T$ as
\begin{equation}
    \mathcal{A}_{r, \theta}(T) \coloneq \mathcal{A}_{H_t}(\tau_T), \nonumber
\end{equation}
where $\tau_T$ is the trajectory associated with $T$. The actions of inscribed isosceles trapezoids can be represented as signed areas bounded by the diagonals of the inscribed isosceles trapezoid and the Jordan curve. This motivates why spectral invariants may be used as a tool to obstruct \emph{shrinkout}.

For the remainder of Section \ref{sec:actions}, we mostly restrict our attention to the case $r \in (0,1/2)$, which corresponds to isosceles trapezoids that are not rectangles. The rectangular case can be treated by the same methods, and all definitions and computations extend without difficulty; however, their inclusion requires additional technical distinctions, and we therefore omit them for the sake of clarity.

\subsubsection{Action of an elegantly inscribed isosceles trapezoid} \label{sec:elegant_action}

In certain special cases of inscriptions of isosceles trapezoids, there is a simple geometric interpretation of the action. One such example is the case of \emph{elegant} inscriptions. 

\begin{definition}[Elegant inscription]
    Let $T \subset \mathbb{C}$ be an inscribed isosceles trapezoid in a Jordan curve $\gamma$ and $C \subset \mathbb{C}$ be the unique circle through $T$. We say that $T \subset \mathbb{C}$ is \emph{elegantly inscribed} in $\gamma$ if $\gamma$ can be continuously isotoped to $C$ through Jordan curves, whilst keeping $T$ fixed.
\end{definition}

\noindent We now wish to describe the action of an elegantly inscribed isosceles trapezoid in terms of areas bounded by the Jordan curve and the diagonals of the isosceles trapezoid.

Let $\tau:[0,1] \rightarrow \mathbb{C}^2$ be a non-constant trajectory of $H_t$ such that the coordinates of $\tau(0)=(z,w)$ and $\tau(1)=(z',w')$ are the vertices of an elegantly inscribed isosceles trapezoid $T \in T_{r, \theta}$ in $\gamma$. We wish to calculate $\mathcal{A}_{r,\theta}(T) \coloneq \mathcal{A}_{H_t}(\tau)$. We recall that the symplectic action $\mathcal{A}_{H_t}(\tau)$ splits into two integrals: the first being the integral of the Hamiltonian along the trajectory, and the second being the symplectic area of a preferred capping of $\tau$. We will consider these integrals separately.

To compute the first integral, note that the Hamiltonian $H$ (recall $H_t = \beta(t)H$ with $\int_0^1 \beta(t) \, dt = 1$) is constant along the trajectory $\tau:[0,1] \rightarrow \mathbb{C}^2$. To be explicit, this follows from
\begin{equation}
    \frac{d}{dt}\big(H \circ \tau(t)\big) = dH\big(\dot{\tau}(t)\big) = dH \big(\beta(t)X_H\big) = \beta(t)\,dH(X_H) = \beta(t)\,\omega(X_H,X_H) = 0, \nonumber
\end{equation}
where we have used the fact $X_{H_t} = \beta(t) X_H$, which follows immediately from the linearity of the interior product $\iota_{X}$. We therefore have
\begin{equation}
    \int_0^1 H_t \circ \tau (t) dt = H \circ \tau(0) \int_0^1 \beta(t) \;dt = \frac{1}{2} \theta r(1-r) |z-w|^2. \nonumber
\end{equation}
Note that $|z-w|$ is simply the length of the diagonal of $T_{r, \theta}$, therefore the areas $A_1$ and $A_2$ indicated in Figure \ref{fig:elegant_action} are simply $A_1 = \tfrac{1}{2} \theta r^2 |z-w|^2$ and $A_2 = \tfrac{1}{2} \theta (1-r)^2 |z-w|^2$ respectively. After simplification, we obtain 
\begin{equation}
    \int_0^1 H_t \circ \tau (t) dt = (1-r) \cdot A_1 + r \cdot A_2. \nonumber
\end{equation}
This gives a geometric interpretation for the first integral in terms of circle segments bounded by the diagonals of the inscribed isosceles trapezoid.

For the second integral, we let $p_1:[0,1] \rightarrow \gamma$ parametrise the arc of $\gamma \setminus \{w,w'\}$ from $z$ to $z'$ and $p_2:[0,1] \rightarrow \gamma$ parametrise the arc of $\gamma \setminus \{z,z'\}$ from $w$ to $w'$, as indicated in the left diagram of Figure \ref{fig:action}. Then $p = (p_1,p_2):[0,1] \rightarrow \gamma \times \gamma$ defines a path from $(z,w)$ to $(z',w')$ in $\gamma \times \gamma$. Importantly, $\tau \cup p$ has winding number 0 around $\Delta(\mathbb{C})$, which is easy to see by considering the projection $\pi_{d}(z,w) = z-w$. Therefore, $\tau \cup p$ is the boundary of some preferred capping $\widehat{\tau}$. Writing $\pi_i:\mathbb{C}^2 \rightarrow \mathbb{C}$ for the projection onto the $i^\text{th}$ coordinate, we then calculate
\begin{eqnarray}
    \int_{[0,1]^2} \widehat{\tau}^*\omega &=&  (1-r)\int_{[0,1]^2} \widehat{\tau}^* dx_1 \wedge dy_1 + r \int_{[0,1]^2} \widehat{\tau}^* dx_2 \wedge dy_2 \nonumber\\
    &=& (1-r)\int_{[0,1]^2} (\pi_1 \circ \widehat{\tau})^* dx \wedge dy + r \int_{[0,1]^2} (\pi_2 \circ \widehat{\tau})^* dx \wedge dy.  \nonumber
\end{eqnarray}
These resulting integrals correspond to the signed areas co-bounded by $\tau_i = \pi_i(\tau)$ and $p_i$, as indicated by $A_3$ and $A_4$ respectively in Figure \ref{fig:elegant_action}. However, since we are dealing with signed areas, we must be careful with orientations. The orientation of $\widehat{\tau}$ is such that the induced orientation on the boundary $\tau \cup p$ is given by traversing $p$ forward and $\tau$ backward.

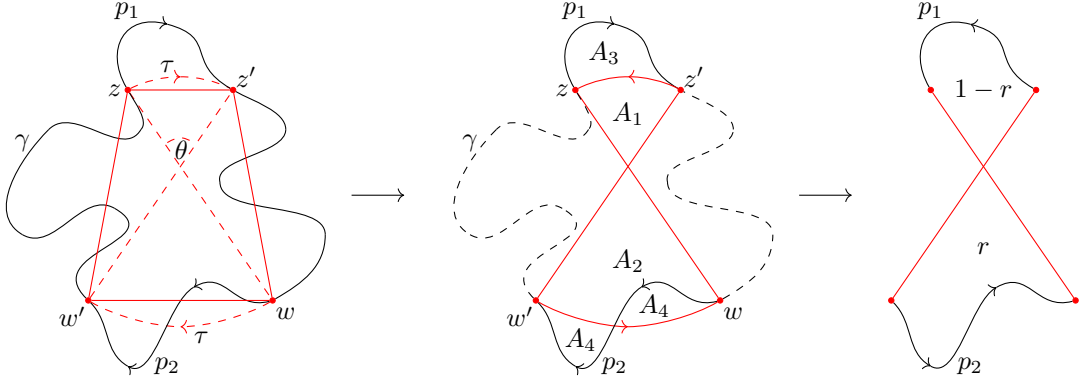
\begin{figure}[h]
    \centering

\begin{tikzpicture}[scale = 0.7]
	\begin{pgfonlayer}{nodelayer}
		\node [style=new style 1] (1) at (-2.25, -2) {};
		\node [style=new style 1] (2) at (-5.75, -2) {};
		\node [style=none] (3) at (-5.25, -0.25) {};
		\node [style=new style 1] (5) at (-5, 2) {};
		\node [style=new style 1] (6) at (-3, 2) {};
		\node [style=none] (9) at (-5, -3.25) {};
		\node [style=none] (10) at (-2.75, 2.25) {$z'$};
		\node [style=none] (11) at (-2, -2.25) {$w$};
		\node [style=none] (12) at (-6.05, -2.3) {$w'$};
		\node [style=none] (13) at (-5.25, 2) {$z$};
		\node [style=none] (14) at (-5, 3.5) {$p_1$};
		\node [style=none] (15) at (-4.25, -3.25) {$p_2$};
		\node [style=none] (16) at (-4.25, 1) {};
		\node [style=none] (17) at (-3.75, 1) {};
		\node [style=none] (18) at (-3.98, 0.87) {$\theta$};
		\node [style=none] (19) at (-4, 2.25) {};
		\node [style=none] (20) at (-4, -2.5) {};
		\node [style=none] (48) at (0.25, 0) {};
		\node [style=none] (49) at (-0.75, 0) {};
		\node [style=none] (50) at (-7, 1) {$\gamma$};
		\node [style=none] (52) at (-4.25, 2.45) {$\tau$};
		\node [style=none] (53) at (-3.6, -2.7) {$\tau$};
		\node [style=none] (54) at (-3.75, -1.75) {};
		\node [style=none] (55) at (-1.25, -0.75) {};
		\node [style=none] (57) at (-3.25, 0.25) {};
		\node [style=none] (58) at (-2, 1.25) {};
		\node [style=none] (59) at (-4.25, 3.25) {};
		\node [style=none] (61) at (-4.75, 1.25) {};
		\node [style=none] (62) at (-6.5, 1.25) {};
		\node [style=none] (63) at (-7, -0.75) {};
		\node [style=new style 1] (64) at (6.25, -2) {};
		\node [style=new style 1] (65) at (2.75, -2) {};
		\node [style=none] (66) at (3.25, -0.25) {};
		\node [style=new style 1] (67) at (3.5, 2) {};
		\node [style=new style 1] (68) at (5.5, 2) {};
		\node [style=none] (69) at (3.5, -3.25) {};
		\node [style=none] (70) at (5.75, 2.25) {$z'$};
		\node [style=none] (71) at (6.5, -2.25) {$w$};
		\node [style=none] (72) at (2.45, -2.3) {$w'$};
		\node [style=none] (73) at (3.25, 2) {$z$};
		\node [style=none] (74) at (3.5, 3.5) {$p_1$};
		\node [style=none] (75) at (4.25, -3.25) {$p_2$};
		\node [style=none] (76) at (4.25, 1) {};
		\node [style=none] (77) at (4.75, 1) {};
		\node [style=none] (79) at (4.5, 2.25) {};
		\node [style=none] (80) at (4.5, -2.5) {};
		\node [style=none] (81) at (1.5, 1) {$\gamma$};
		\node [style=none] (84) at (4.75, -1.75) {};
		\node [style=none] (85) at (7.25, -0.75) {};
		\node [style=none] (86) at (5.25, 0.25) {};
		\node [style=none] (87) at (6.5, 1.25) {};
		\node [style=none] (88) at (4.25, 3.25) {};
		\node [style=none] (89) at (3.75, 1.25) {};
		\node [style=none] (90) at (2, 1.25) {};
		\node [style=none] (91) at (1.5, -0.75) {};
		\node [style=none] (92) at (4.5, -1.25) {$A_2$};
		\node [style=none] (93) at (4.5, 1.55) {$A_1$};
		\node [style=none] (94) at (4.05, 2.7) {$A_3$};
		\node [style=none] (95) at (3.6, -2.75) {$A_4$};
		\node [style=none] (96) at (5, -2.1) {$A_4$};
		\node [style=none] (97) at (8.75, 0) {};
		\node [style=none] (98) at (7.75, 0) {};
		\node [style=new style 1] (99) at (13, -2) {};
		\node [style=new style 1] (100) at (9.5, -2) {};
		\node [style=new style 1] (102) at (10.25, 2) {};
		\node [style=new style 1] (103) at (12.25, 2) {};
		\node [style=none] (104) at (10.25, -3.25) {};
		\node [style=none] (111) at (11, 1) {};
		\node [style=none] (112) at (11.5, 1) {};
		\node [style=none] (118) at (11.5, -1.75) {};
		\node [style=none] (121) at (11, 3.25) {};
		\node [style=none] (122) at (11.25, 2) {$1-r$};
		\node [style=none] (123) at (11.25, -1) {$r$};
		\node [style=none] (124) at (10.25, 3.5) {$p_1$};
		\node [style=none] (125) at (11, -3.25) {$p_2$};
	\end{pgfonlayer}
	\begin{pgfonlayer}{edgelayer}
		\draw [in=-30, out=135, looseness=1.75] (2) to (3.center);
		\draw [style=new edge style 4] (2) to (6);
		\draw [style=new edge style 4] (5) to (1);
		\draw [style=new edge style 4, bend left, looseness=1.25] (16.center) to (17.center);
		\draw [style=new edge style 7, bend left=15, looseness=0.75] (5) to (19.center);
		\draw [style=new edge style 4, bend left=15, looseness=0.75] (19.center) to (6);
		\draw [style=new edge style 4, bend right=15, looseness=0.75] (2) to (20.center);
		\draw [style=new edge style 7, bend left=15, looseness=0.75] (1) to (20.center);
		\draw [style=new edge style 5, in=180, out=0] (49.center) to (48.center);
		\draw [in=150, out=-45] (2) to (9.center);
		\draw [style=new edge style 9, in=-150, out=-30, looseness=0.75] (9.center) to (54.center);
		\draw [style=new edge style 9, in=-165, out=30, looseness=1.25] (54.center) to (1);
		\draw [in=-90, out=30] (1) to (55.center);
		\draw [in=-90, out=105, looseness=0.75] (57.center) to (58.center);
		\draw [in=330, out=90, looseness=0.75] (58.center) to (6);
		\draw [in=90, out=-90, looseness=0.75] (57.center) to (55.center);
		\draw [in=-15, out=150, looseness=1.50] (6) to (59.center);
		\draw [in=45, out=-60, looseness=0.75] (5) to (61.center);
		\draw [in=45, out=-120] (61.center) to (62.center);
		\draw [style=new edge style 9, in=120, out=165, looseness=1.50] (59.center) to (5);
		\draw [in=150, out=-135] (62.center) to (63.center);
		\draw [in=150, out=-30] (63.center) to (3.center);
		\draw [style=new edge style 0, in=-30, out=135, looseness=1.75] (65) to (66.center);
		\draw [style=new edge style 1] (65) to (68);
		\draw [style=new edge style 1] (67) to (64);
		\draw [style=new edge style 1, bend left=15, looseness=0.75] (67) to (79.center);
		\draw [style=new edge style 8, bend right=15, looseness=0.75] (65) to (80.center);
		\draw [style=new edge style 1, bend left=15, looseness=0.75] (64) to (80.center);
		\draw [in=150, out=-45] (65) to (69.center);
		\draw [style=new edge style 9, in=-150, out=-30, looseness=0.75] (69.center) to (84.center);
		\draw [style=new edge style 9, in=-165, out=30, looseness=1.25] (84.center) to (64);
		\draw [style=new edge style 0, in=-90, out=30] (64) to (85.center);
		\draw [style=new edge style 0, in=-90, out=105, looseness=0.75] (86.center) to (87.center);
		\draw [style=new edge style 0, in=330, out=90, looseness=0.75] (87.center) to (68);
		\draw [style=new edge style 0, in=90, out=-90, looseness=0.75] (86.center) to (85.center);
		\draw [in=-15, out=150, looseness=1.50] (68) to (88.center);
		\draw [style=new edge style 0, in=45, out=-60, looseness=0.75] (67) to (89.center);
		\draw [style=new edge style 0, in=45, out=-120] (89.center) to (90.center);
		\draw [style=new edge style 9, in=120, out=165, looseness=1.50] (88.center) to (67);
		\draw [style=new edge style 0, in=150, out=-135] (90.center) to (91.center);
		\draw [style=new edge style 0, in=150, out=-30] (91.center) to (66.center);
		\draw [style=new edge style 5, in=180, out=0] (98.center) to (97.center);
		\draw [style=new edge style 1] (100) to (103);
		\draw [style=new edge style 1] (102) to (99);
		\draw [style=new edge style 5, in=150, out=-45] (100) to (104.center);
		\draw [style=new edge style 5, in=-150, out=-30, looseness=0.75] (104.center) to (118.center);
		\draw [in=-165, out=30, looseness=1.25] (118.center) to (99);
		\draw [style=new edge style 5, in=-15, out=150, looseness=1.50] (103) to (121.center);
		\draw [in=120, out=165, looseness=1.50] (121.center) to (102);
		\draw [style=new edge style 8, bend right=15, looseness=0.75] (68) to (79.center);
        \draw [style=new edge style 1] (5) to (2);
		\draw [style=new edge style 1] (2) to (1);
		\draw [style=new edge style 1] (1) to (6);
		\draw [style=new edge style 1] (5) to (6);
	\end{pgfonlayer}
\end{tikzpicture}

    \caption{The left diagram is of an elegantly inscribed isosceles trapezoid, with the associated vertices and paths (with orientations) in the above discussion labelled. The middle diagram indicates the areas $A_1$, $A_2$ of the circle segments relating to the integral $\int_0^1 H \circ \tau(t) dt$ and the signed areas $A_3$, $A_4$ co-bounded by $\tau_i \cup p_i$ relating to $\int \widehat{\tau}^* \omega$. The right diagram shows the final geometric interpretation of the action after all signs and orientations are considered.}
    \label{fig:elegant_action}
\end{figure}

Combining the expressions for the two integrals in the action, we find that the action of an elegantly inscribed isosceles trapezoid $T \in T_{r,\theta}$ is given by 
\begin{equation}
    \mathcal{A}_{r, \theta}(T) = (1-r)(A_1 - A_3) + r (A_2 - A_4). \nonumber
\end{equation}
This gives a simple geometric description of the action of an elegantly inscribed rectangle as a \emph{weighted double ice-cream area}, illustrated in the right diagram of Figure \ref{fig:elegant_action}. The \textit{ice-cream} with the smaller \textit{cone} is weighted by $1-r$, while the \textit{ice-cream} with the larger \textit{cone} is weighted by $r$.

\paragraph{Special Cases}

Before we continue to consider actions of other types of inscribed isosceles trapezoids, we note some important special cases of the action in the elegant inscription case.
\begin{enumerate}
    \item When $r=1/2$ we recover (up to a factor due to the scaled symplectic form) that the action of an elegantly inscribed rectangle is the \textit{double ice-cream area} discussed in \cite{Greene-Lobb:Floer-homology}.
    \item For fixed $r \neq 1/2$, consider a sequence of elegantly inscribed isosceles trapezoids $T_{\theta_n} \in T_{r,\theta_n}$ associated to trajectories $\tau_{\theta_n}$ that shrink to a point as $\theta_n \rightarrow \pi$. We then have $\mathcal{A}(\tau_n) \rightarrow r \cdot \operatorname{Area}(\gamma)$, since $A_1, A_2, A_3 \rightarrow 0$ and $A_4 \rightarrow -\operatorname{Area}(\gamma)$ as $\theta_n \rightarrow \pi$.
\end{enumerate}
The requirement for the elegantly inscribed isosceles trapezoids to shrink to a point as $\theta \rightarrow \pi$ may seem overly restrictive to be useful, however, we note that this is a regular phenomenon. In particular, for convex Jordan curves, this is necessarily the case. We will return to considering limits of isosceles trapezoids as $\theta \rightarrow \pi$ in more general Jordan curves shortly.

\subsubsection{Action of an almost-elegantly inscribed isosceles trapezoid} \label{sec:almost_elegant_action}

Another important class of inscriptions of isosceles trapezoids is \emph{almost-elegant} inscriptions. To define this class of inscriptions, we introduce the notion of a \emph{sweep-around move} for Jordan curves. Consider a Jordan curve $\gamma$ in $\mathbb{C}$. By compactifying $\mathbb{C}$ by adding a point at infinity, we can view $\gamma$ as a Jordan curve in $S^2 \cong \mathbb{C} \cup \{\infty\}$ and then we can push an arc of the Jordan curve through the point $\infty$. We will call the process of pushing an arc through $\infty$ the sweep around move.

\begin{definition}[Almost-Elegant Inscription]
    Let $T \subset \mathbb{C}$ be an inscribed isosceles trapezoid in a Jordan curve $\gamma$ and $C \subset \mathbb{C}$ be the unique circle through $T$. Moreover, let $\gamma'$ be the arc of $\gamma \setminus T$ whose closure has endpoints $w, w' \in T$, the endpoints of the longer parallel edge of the trapezoid. We say that $T \subset \mathbb{C}$ is \emph{almost-elegantly inscribed} in $\gamma$ if $\gamma$ can be continuously isotoped to $C$ through Jordan curves, whilst keeping $T$ fixed and using exactly one sweep around move for the arc $\gamma'$.
\end{definition}

\begin{figure}[h]
    \centering

\begin{tabular}{cc}
\begin{tikzpicture}[scale=0.7]
	\begin{pgfonlayer}{nodelayer}
		\node [style=none] (0) at (-3, -2) {};
		\node [style=new style 1] (1) at (-3.5, -2.75) {};
		\node [style=new style 1] (2) at (-7, -2.75) {};
		\node [style=none] (3) at (-7.75, -1) {};
		\node [style=none] (4) at (-7.75, 0.25) {};
		\node [style=new style 1] (5) at (-6.25, 1.25) {};
		\node [style=new style 1] (6) at (-4.25, 1.25) {};
		\node [style=none] (7) at (-3, -0.5) {};
		\node [style=none] (8) at (-5.25, 2) {};
		\node [style=none] (9) at (-2.5, -3.25) {};
		\node [style=none] (10) at (-4, 1.5) {$z'$};
		\node [style=none] (11) at (-3.25, -3) {$w$};
		\node [style=none] (12) at (-7.25, -3) {$w'$};
		\node [style=none] (13) at (-6.4, 1.4) {$z$};
		\node [style=none] (16) at (-5.5, 0.25) {};
		\node [style=none] (17) at (-5, 0.25) {};
		\node [style=none] (49) at (-2.75, 0.25) {$\gamma$};
		\node [style=none] (53) at (-1.75, -1) {};
		\node [style=none] (54) at (-2.5, 1.25) {};
		\node [style=none] (55) at (-4.5, 3) {};
		\node [style=none] (56) at (-6.75, 1.75) {};
		\node [style=none] (57) at (-8.75, 0.75) {};
		\node [style=none] (58) at (-8.5, -1.5) {};
		\node [style=none] (59) at (-8, -3.5) {};
	\end{pgfonlayer}
	\begin{pgfonlayer}{edgelayer}
		\draw [in=75, out=-60, looseness=1.75] (6) to (7.center);
		\draw [in=45, out=-105, looseness=1.75] (7.center) to (0.center);
		\draw [in=75, out=-135] (0.center) to (1);
		\draw [in=-15, out=120, looseness=1.50] (2) to (3.center);
		\draw [in=-150, out=165] (3.center) to (4.center);
		\draw [in=-120, out=30, looseness=1.25] (4.center) to (5);
		\draw [in=-105, out=-135, looseness=1.75] (9.center) to (1);
		\draw [in=-120, out=45, looseness=1.50] (5) to (8.center);
		\draw [in=60, out=120, looseness=1.50] (6) to (8.center);
		\draw [style=new edge style 1] (5) to (2);
		\draw [style=new edge style 1] (2) to (1);
		\draw [style=new edge style 1] (1) to (6);
		\draw [style=new edge style 1] (5) to (6);
		\draw [style=new edge style 4] (2) to (6);
		\draw [style=new edge style 4] (5) to (1);
		\draw [in=-45, out=60, looseness=1.25] (9.center) to (53.center);
		\draw [in=0, out=135, looseness=0.75] (53.center) to (54.center);
		\draw [in=-15, out=165] (54.center) to (55.center);
		\draw [in=60, out=165] (55.center) to (56.center);
		\draw [in=60, out=-135] (56.center) to (57.center);
		\draw [in=135, out=-120] (57.center) to (58.center);
		\draw [in=120, out=-45] (58.center) to (59.center);
		\draw [in=-60, out=-60, looseness=1.50] (59.center) to (2);
	\end{pgfonlayer}
\end{tikzpicture}

&

\begin{tikzpicture}[scale=0.6]
	\begin{pgfonlayer}{nodelayer}
		\node [style=none] (60) at (8, -3.75) {};
		\node [style=new style 1] (61) at (7.25, -2.75) {};
		\node [style=new style 1] (62) at (3.75, -2.75) {};
		\node [style=none] (63) at (3, -1) {};
		\node [style=none] (64) at (3, 0.25) {};
		\node [style=new style 1] (65) at (4.5, 1.25) {};
		\node [style=new style 1] (66) at (6.5, 1.25) {};
		\node [style=none] (67) at (7.5, 1.5) {};
		\node [style=none] (68) at (5.5, 2) {};
		\node [style=none] (69) at (5, -3.75) {};
		\node [style=none] (70) at (6.8, 1.5) {$z'$};
		\node [style=none] (71) at (7.5, -3) {$w$};
		\node [style=none] (72) at (3.4, -3) {$w'$};
		\node [style=none] (73) at (4.3, 1.42) {$z$};
		\node [style=none] (76) at (5.25, 0.25) {};
		\node [style=none] (77) at (5.75, 0.25) {};
		\node [style=none] (84) at (6.75, 3) {};
		\node [style=none] (85) at (4.25, 2.5) {};
		\node [style=none] (86) at (2.75, 1.25) {};
		\node [style=none] (87) at (1.75, -1.25) {};
		\node [style=none] (88) at (3.5, -4) {};
		\node [style=none] (90) at (5.5, -5.25) {};
    \end{pgfonlayer}
	\begin{pgfonlayer}{edgelayer}
		\draw [in=-90, out=-60, looseness=1.50] (66) to (67.center);
		\draw [in=45, out=60, looseness=1.25] (60.center) to (61);
		\draw [in=-15, out=135, looseness=1.75] (62) to (63.center);
		\draw [in=-150, out=165] (63.center) to (64.center);
		\draw [in=-120, out=30, looseness=1.25] (64.center) to (65);
		\draw [in=120, out=-45, looseness=1.25] (62) to (69.center);
		\draw [in=-135, out=-60, looseness=1.25] (69.center) to (61);
		\draw [in=-120, out=45, looseness=1.50] (65) to (68.center);
		\draw [in=60, out=120, looseness=1.50] (66) to (68.center);
		\draw [style=new edge style 1] (65) to (62);
		\draw [style=new edge style 1] (62) to (61);
		\draw [style=new edge style 1] (61) to (66);
		\draw [style=new edge style 1] (65) to (66);
		\draw [style=new edge style 4] (62) to (66);
		\draw [style=new edge style 4] (65) to (61);
		\draw [in=-45, out=90, looseness=1.25] (67.center) to (84.center);
		\draw [in=45, out=135] (84.center) to (85.center);
		\draw [in=15, out=-135] (85.center) to (86.center);
		\draw [in=135, out=-165, looseness=1.25] (86.center) to (87.center);
		\draw [in=165, out=-45, looseness=1.25] (87.center) to (88.center);
		\draw [in=-120, out=0, looseness=0.75] (90.center) to (60.center);
		\draw [in=180, out=-15] (88.center) to (90.center);
	\end{pgfonlayer}
\end{tikzpicture}
\end{tabular}

    \caption{The left diagram is of an almost-elegantly inscribed isosceles trapezoid where the arc between $w$ and $w'$ has to be swept around to isotope $\gamma$ to the trapezoid. The isosceles trapezoid in the right diagram is not almost-elegantly inscribed since the arc between $w$ and $z'$ has to be swept around to isotope $\gamma$ to the trapezoid.}
    \label{fig:almost_elegant_inscription}
\end{figure}
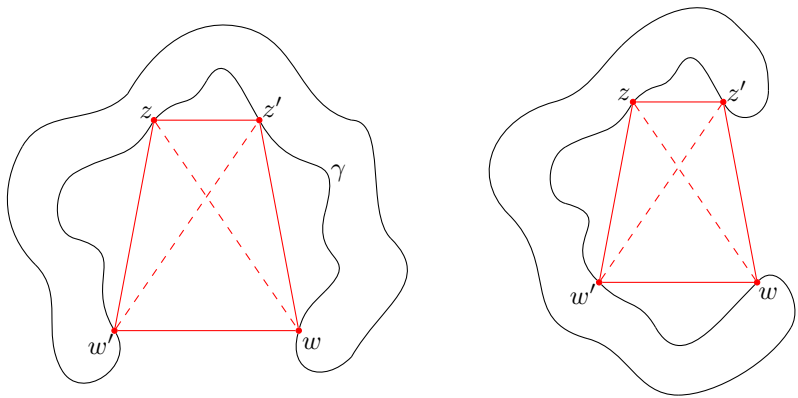

To calculate the action of an almost-elegantly inscribed isosceles trapezoid, we follow a very similar process to that of calculating the action of an elegantly inscribed isosceles trapezoid. The main difference in this setting is that we must take more care when defining the path $p$ to ensure that the winding number of $\tau \cup p$ around $\Delta(\gamma)$ is zero. 


We let $p_1:[0,1] \rightarrow \gamma$ parametrise the arc of $\gamma$ from $z$ to $z'$ through $w'$ then $w$ and let $p_2:[0,1] \rightarrow \gamma$ parametrise the arc of $\gamma$ from $w$ to $w'$ through $z'$ and $z$. These curves are represented on the right side of Figure \ref{fig:almost_elegant_action}. Note that we must be careful to parametrise $p_1$ and $p_2$ so that $p = (p_1,p_2):[0,1] \rightarrow \gamma \times \gamma$ stays away from the diagonal; however, it is easy to see that this is always possible. Given such a parametrisation, it is also easy to verify (by considering the projection $\pi_d$) that $\tau \cup p$ has winding number $0$ around the diagonal $\Delta(\gamma)$ and therefore the preferred capping class $[\widehat{\tau}]$ of $\tau$ contains a representative $\widehat{\tau}$ co-bounded by $\tau$ and $p$.

We may then repeat the computations for elegantly inscribed isosceles trapezoids in the almost-elegantly inscribed isosceles trapezoid setting. This obtains a geometric description of the action of an almost-elegantly inscribed isosceles trapezoid as a weighted sum of the signed areas of the following regions: the small cone (weight $1-r$), the large cone (weight $r$), the region bounded by $\tau_1 \cup p_1$ (weight $1-r$), and the region bounded by $\tau_2 \cup p_2$ (weight $r$). Gluing the regions with the same weight together, while keeping track of the orientations, yields a representation of the action as the weighted sum of the signed areas of the regions indicated in Figure \ref{fig:almost_elegant_action} with the indicated orientations.

\begin{figure}[h]
    \centering

\begin{tikzpicture}[scale=0.6]
	\begin{pgfonlayer}{nodelayer}
		\node [style=none] (0) at (-2.75, -1.75) {};
		\node [style=new style 1] (1) at (-3.25, -2.5) {};
		\node [style=new style 1] (2) at (-6.75, -2.5) {};
		\node [style=none] (3) at (-7.5, -0.75) {};
		\node [style=none] (4) at (-7.5, 0.5) {};
		\node [style=new style 1] (5) at (-6, 1.5) {};
		\node [style=new style 1] (6) at (-4, 1.5) {};
		\node [style=none] (7) at (-2.75, -0.25) {};
		\node [style=none] (8) at (-5, 2.25) {};
		\node [style=none] (9) at (-2.25, -3) {};
		\node [style=none] (10) at (-3.7, 1.75) {$z'$};
		\node [style=none] (11) at (-3, -2.75) {$w$};
		\node [style=none] (12) at (-7.15, -2.75) {$w'$};
		\node [style=none] (13) at (-6.2, 1.7) {$z$};
		\node [style=none] (14) at (-5.25, 0.5) {};
		\node [style=none] (15) at (-4.75, 0.5) {};
		\node [style=none] (16) at (-5, 0.25) {$\theta$};
		\node [style=none] (17) at (-5, 1.75) {};
		\node [style=none] (18) at (-5, -3) {};
		\node [style=none] (19) at (-2.5, 0.5) {$\gamma$};
		\node [style=none] (20) at (-4.75, 2) {$\tau$};
		\node [style=none] (21) at (-4.6, -3.25) {$\tau$};
		\node [style=none] (22) at (-1.5, -0.75) {};
		\node [style=none] (23) at (-2.25, 1.5) {};
		\node [style=none] (24) at (-4.25, 3.25) {};
		\node [style=none] (25) at (-6.5, 2) {};
		\node [style=none] (26) at (-8.5, 1) {};
		\node [style=none] (27) at (-8.25, -1.25) {};
		\node [style=none] (28) at (-7.75, -3.25) {};
		\node [style=none] (29) at (-1, 0) {};
		\node [style=none] (30) at (0.5, 0) {};
		\node [style=none] (36) at (5.75, 1.5) {};
		\node [style=none] (37) at (7.75, 1.5) {};
		\node [style=none] (47) at (6.75, 0) {};
		\node [style=none] (50) at (16, -2.5) {};
		\node [style=none] (51) at (12.5, -2.5) {};
		\node [style=none] (62) at (14.25, 0) {};
		\node [style=none] (65) at (16.5, -1.75) {};
		\node [style=none] (66) at (16, -2.5) {};
		\node [style=none] (67) at (12.5, -2.5) {};
		\node [style=none] (68) at (11.75, -0.75) {};
		\node [style=none] (69) at (11.75, 0.5) {};
		\node [style=none] (70) at (13.25, 1.5) {};
		\node [style=none] (71) at (15.25, 1.5) {};
		\node [style=none] (72) at (16.5, -0.25) {};
		\node [style=none] (73) at (14.25, 2.25) {};
		\node [style=none] (82) at (14.25, 1.75) {};
		\node [style=none] (84) at (9, -1.75) {};
		\node [style=none] (85) at (8.5, -2.5) {};
		\node [style=none] (86) at (5, -2.5) {};
		\node [style=none] (87) at (4.25, -0.75) {};
		\node [style=none] (88) at (4.25, 0.5) {};
		\node [style=none] (89) at (5.75, 1.5) {};
		\node [style=none] (90) at (7.75, 1.5) {};
		\node [style=none] (91) at (9, -0.25) {};
		\node [style=none] (93) at (9.5, -3) {};
		\node [style=none] (106) at (10.25, -0.75) {};
		\node [style=none] (107) at (9.5, 1.5) {};
		\node [style=none] (108) at (7.5, 3.25) {};
		\node [style=none] (109) at (5.25, 2) {};
		\node [style=none] (110) at (3.25, 1) {};
		\node [style=none] (111) at (3.5, -1.25) {};
		\node [style=none] (112) at (4, -3.25) {};
		\node [style=none] (113) at (8.9, 2.5) {$p_1$};
		\node [style=none] (114) at (15.35, 2.25) {$p_2$};
		\node [style=none] (115) at (1.75, 0) {$(1-r)$};
		\node [style=none] (118) at (10.75, 0) {$+\;r$};
		\node [style=none] (119) at (-0.25, 0.5) {$\mathcal{A}$};
	\end{pgfonlayer}
	\begin{pgfonlayer}{edgelayer}
		\draw [in=75, out=-60, looseness=1.75] (6) to (7.center);
		\draw [in=45, out=-105, looseness=1.75] (7.center) to (0.center);
		\draw [in=75, out=-135] (0.center) to (1);
		\draw [in=-15, out=120, looseness=1.50] (2) to (3.center);
		\draw [in=-150, out=165] (3.center) to (4.center);
		\draw [in=-120, out=30, looseness=1.25] (4.center) to (5);
		\draw [in=-105, out=-135, looseness=1.75] (9.center) to (1);
		\draw [in=-120, out=45, looseness=1.50] (5) to (8.center);
		\draw [in=60, out=120, looseness=1.50] (6) to (8.center);
		\draw [style=new edge style 1] (5) to (2);
		\draw [style=new edge style 1] (2) to (1);
		\draw [style=new edge style 1] (1) to (6);
		\draw [style=new edge style 1] (5) to (6);
		\draw [style=new edge style 4] (2) to (6);
		\draw [style=new edge style 4] (5) to (1);
		\draw [style=new edge style 4, bend left, looseness=1.25] (14.center) to (15.center);
		\draw [style=new edge style 7, bend left=15, looseness=0.75] (5) to (17.center);
		\draw [style=new edge style 4, bend left=15, looseness=0.75] (17.center) to (6);
		\draw [style=new edge style 4, bend right=15, looseness=0.75] (2) to (18.center);
		\draw [style=new edge style 7, bend left=15, looseness=0.75] (1) to (18.center);
		\draw [in=-45, out=60, looseness=1.25] (9.center) to (22.center);
		\draw [in=0, out=135, looseness=0.75] (22.center) to (23.center);
		\draw [in=-15, out=165] (23.center) to (24.center);
		\draw [in=60, out=165] (24.center) to (25.center);
		\draw [in=60, out=-135] (25.center) to (26.center);
		\draw [in=135, out=-120] (26.center) to (27.center);
		\draw [in=120, out=-45] (27.center) to (28.center);
		\draw [in=-60, out=-60, looseness=1.50] (28.center) to (2);
		\draw [style=new edge style 5] (29.center) to (30.center);
		\draw [style=new edge style 4] (36.center) to (47.center);
		\draw [style=new edge style 4] (47.center) to (37.center);
		\draw [style=new edge style 4] (51.center) to (62.center);
		\draw [style=new edge style 4] (62.center) to (50.center);
		\draw [style=new edge style 5, in=75, out=-60, looseness=1.75] (71.center) to (72.center);
		\draw [in=45, out=-105, looseness=1.75] (72.center) to (65.center);
		\draw [in=75, out=-135] (65.center) to (66.center);
		\draw [in=-15, out=120, looseness=1.50] (67.center) to (68.center);
		\draw [style=new edge style 5, in=-150, out=165] (68.center) to (69.center);
		\draw [in=-120, out=30, looseness=1.25] (69.center) to (70.center);
		\draw [style=new edge style 5, in=-120, out=45, looseness=1.50] (70.center) to (73.center);
		\draw [in=60, out=120, looseness=1.50] (71.center) to (73.center);
		\draw [in=75, out=-60, looseness=1.75] (90.center) to (91.center);
		\draw [style=new edge style 5, in=45, out=-105, looseness=1.75] (91.center) to (84.center);
		\draw [in=75, out=-135] (84.center) to (85.center);
		\draw [style=new edge style 5, in=-15, out=120, looseness=1.50] (86.center) to (87.center);
		\draw [in=-150, out=165] (87.center) to (88.center);
		\draw [in=-120, out=30, looseness=1.25] (88.center) to (89.center);
		\draw [in=-105, out=-135, looseness=1.75] (93.center) to (85.center);
		\draw [in=-45, out=60, looseness=1.25] (93.center) to (106.center);
		\draw [style=new edge style 5, in=0, out=135, looseness=0.75] (106.center) to (107.center);
		\draw [in=-15, out=165] (107.center) to (108.center);
		\draw [in=60, out=165] (108.center) to (109.center);
		\draw [style=new edge style 5, in=60, out=-135] (109.center) to (110.center);
		\draw [in=135, out=-120] (110.center) to (111.center);
		\draw [in=120, out=-45] (111.center) to (112.center);
		\draw [in=-60, out=-60, looseness=1.50] (112.center) to (86.center);
	\end{pgfonlayer}
\end{tikzpicture}

    \caption{The left of the diagram shows an almost-elegantly inscribed isosceles trapezoid where the arc between $w$ and $w'$ has to be swept around to isotope $\gamma$ to the trapezoid. The right hand side represents the corresponding action given by a weighted sum of the areas bounded by $p$ under the projections to the first and second complex coordinate and the correspondingly weighted cones. Note that the sign of the areas is dependent on the orientation of the curve.}
    \label{fig:almost_elegant_action}
\end{figure}
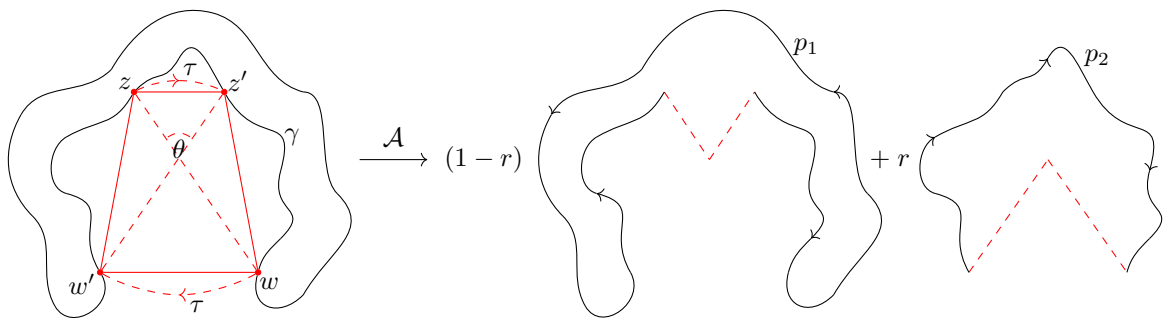

\subsubsection{Action limit for large angles}

We now return to considering the limits of sequences of inscribed isosceles trapezoids $T_{\theta_n}$ in a given Jordan curve $\gamma$ and the associated actions $\mathcal{A}_{r,\theta_n}(T_{\theta_n})$ as $\theta_n \rightarrow \pi$. For $r=1/2$, the case of rectangles, such sequences of inscribed rectangles in a smooth Jordan curve $\gamma$ always limit to binormals, whose action equals half the area of the region bounded by the Jordan curve. For $r \in (0,1/2)$, there are two distinct possibilities for the limit.

\paragraph{Shrinkout}

The first possibility is that the isosceles trapezoids shrink to a point in the limit, that is, $T_{\theta_n} \rightarrow p \in \gamma$. Unlike the case of shrinkout when varying the Jordan curve, this variety of shrinkout does not pose any problems. In fact, this is the simpler case to deal with, largely due to the following result.

\begin{lemma} \label{lemma:elegant_in_limit}
    Let $\gamma$ be a smooth Jordan curve and $T_n \in T_{r, \theta_n}$ be a sequence of inscribed isosceles trapezoids in $\gamma$ of fixed aspect ratio $r \in (0, 1/2)$ limiting to a point, that is, $T_n \rightarrow p \in \gamma$. Then $\theta_n \rightarrow \pi$ and there exists an $N>0$ such that for all $n>N$ the inscribed isosceles trapezoids $T_{\theta_n}$ are either all elegantly inscribed or all almost-elegantly inscribed. 
\end{lemma}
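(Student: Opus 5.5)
Blow up at the limit point $p$. Since $\gamma$ is smooth, after rescaling by the diagonal length $d_n = |z_n - w_n| \to 0$ and recentring at $p$, the curve $\gamma$ converges in $C^1$ on compact sets to its tangent line $L = T_p\gamma$. The rescaled trapezoids $\widetilde{T}_n = (T_n - p)/d_n$ have unit diagonals, and after passing to a subsequence they converge to a set $\widetilde{T}_\infty$.

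\textbf{Step 1: the four vertices stay distinct in the limit.} Consider the limit of the four vertices. By the construction of $G_{r,\theta}$ (Proposition \ref{prop:rotation_map}), each diagonal is split by the intersection point in the ratio $r:1-r$. The diagonals have length $1$ after rescaling, and $r \in (0,1/2)$ is fixed. Hence the rescaled $z,w$ stay at distance $1$, as do $z',w'$. The intersection point divides each diagonal in a fixed nondegenerate ratio, so $\widetilde{T}_\infty$ has two unit diagonals crossing at some angle $\theta_\infty = \lim \theta_n$ (along a further subsequence).

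\textbf{Step 2: $\theta_n \to \pi$.} All four limiting vertices must lie on the line $L$. For two unit segments crossing at a common interior point and both contained in $L$, the angle is $0$ or $\pi$. Angle $0$ would force $z'=z$ and $w'=w$. The trapezoids are nondegenerate and $\theta_n \in (0,\pi)$, so I need to exclude $\theta_n \to 0$ separately. The limiting configuration on $L$ at angle $0$ rotates $z$ about a point at distance $r$ from $z$ by angle $\theta_n$. Comparing the tangent direction of the chord $zz'$, which has direction approximately perpendicular to $L$, with the fact that $z,z' \in \gamma$ are $C^1$-close to $L$, gives a contradiction: a chord between two distinct nearby points of a $C^1$ curve is nearly parallel to the tangent. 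So $\theta_\infty = \pi$. Every convergent subsequence then has $\theta_\infty = \pi$, hence $\theta_n \to \pi$. In the limit, $z' = $ reflection of $z$ through the crossing point. Since $r < 1/2$, the short segments of length $r$ sit on the side of $z$, so the limit order along $L$ is $w', z, z', w$ up to orientation, with all four points distinct: $z$ and $z'$ are separated by $2r$, and $w,w'$ lie at distance $1-r$ on opposite sides.

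\textbf{Step 3: the combinatorial type stabilises.} For large $n$, the local arc of $\gamma$ near $p$ is a $C^1$ graph over $L$. It contains all four vertices in the order $w',z,z',w$ (or its reverse), and the rest of $\gamma$ is a single arc joining $w$ to $w'$ outside a small neighbourhood. The isotopy class relative to $T_n$ of $\gamma$ is then determined by how this complementary arc sits relative to the circle $C_n$ through $T_n$. The circles $C_n$ have radius of order $d_n \to 0$ near $p$. The short arc from $w'$ through $z,z'$ to $w$ is graphical and isotopes to the corresponding arcs of $C_n$ rel $T_n$.

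\textbf{Step 4: elegant or almost-elegant.} The long arc from $w$ to $w'$ leaves a tiny neighbourhood of $p$. It is isotopic rel endpoints in $S^2 \setminus \{z,z'\}$, that is, in the complement of the small graphical arc, either to the short arc of $C_n$ between $w$ and $w'$ (elegant case) or to the complementary one through $\infty$ (exactly one sweep-around of $\gamma'$, almost-elegant case). Which case occurs is determined by the side of $L$, at $p$, on which the interior of $\gamma$ lies, relative to which side the circles $C_n$ bulge towards near $w,w'$. This side is determined by the sign of the rotation, clockwise by $\theta_n \to \pi$, together with the orientation of $L$ induced by the order $w',z,z',w$. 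Both are eventually constant: the interior side is fixed by $p$, and the ordering is constant along the subsequence.

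\textbf{Main obstacle.} I expect Step 4, making the isotopy dichotomy rigorous, to be the main obstacle. I would handle it by working in a fixed small disc $B$ around $p$ where $\gamma$ is a graph. Inside $B$, isotope $\gamma \cap B$ straight to a segment of $L$ and then to $C_n$, keeping $T_n$ fixed. The exterior arc can then be contracted to $\partial B$ and pushed either way around, with the choice governed by the sides described above. To conclude for the full sequence rather than a subsequence, I would argue by contradiction: if both types occurred infinitely often, extract subsequences of each type and note that both have the same limiting data at $p$, which is impossible.
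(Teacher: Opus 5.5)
The step that fails is the last one, where you conclude that the type is eventually constant because subsequences of the two types ``have the same limiting data at $p$''. The limiting data include the orientation of the configuration $w',z,z',w$ along $L$, i.e.\ whether $(w_n-z_n)/d_n$ tends to $v$ or to $-v$, where $v$ is the counterclockwise unit tangent at $p$. By your own criterion this orientation decides the type, and nothing forces different subsequences to agree on it.

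In fact the uniform claim is false. An isotopy rel $T_n$ through Jordan curves preserves the cyclic order in which the counterclockwise orientation visits $T_n$, and a sweep-around reverses it. Let $\gamma$ contain the graph of the smooth even function $f(x)=e^{-1/x^2}\cos(1/x)$ near $p=0$, with its interior above the graph. Set $b_n=1/(n\pi)$ and $a_n=rb_n/(1-r)$.

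The points $(\pm a_n,f(a_n))$, $(\pm b_n,f(b_n))$ form an inscribed isosceles trapezoid of aspect ratio $r$, since the diagonals cross on the $y$-axis in ratio $a_n:b_n$. It shrinks to $0$ with $\theta_n\to\pi$, and $f(b_n)-f(a_n)$ has sign $(-1)^n$. The centre of $C_n$ lies on the symmetry axis beyond the long edge. So $C_n$, oriented counterclockwise, runs through the four points from left to right exactly when $n$ is even, whereas $\gamma$ does so for every $n$. Hence no even $T_n$ is almost-elegantly inscribed and no odd $T_n$ is elegantly inscribed.

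What your argument proves is that each $T_n$ with $n$ large is elegantly or almost-elegantly inscribed. That is also all the paper's proof establishes, since it only checks that the local arc visits the points in the order $T^1_n,\dots,T^4_n$ or its reverse. A single type holds along a subsequence. It holds for the whole tail when $\kappa(p)\neq 0$, because then $C_n$ converges to the osculating circle at $p$ and the side of its centre is fixed.

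Up to that point your blow-up is a sound alternative to the paper's argument. The paper works quantitatively. On $\gamma\cap D(p,\varepsilon)$ the tangent turns by at most $Kl_\varepsilon$. A mean value argument shows that any visiting order other than $T^1_n,\dots,T^4_n$ or its reverse would need a turn of more than $\pi/2$. The explicit chord angle $\arctan\bigl(\cot(\theta_n/2)/(1-2r)\bigr)$ then gives $1\le(1-2r)\tan(\theta_n/2)\tan(Kl_\varepsilon)$, so $\theta_n\to\pi$ at a controlled rate. Your rescaling uses only $C^1$ regularity and obtains the order and $\theta_n\to\pi$ qualitatively. Your Step 4 sketches the isotopy argument that the paper leaves implicit.

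Two corrections are needed:
\begin{itemize}
\item Rescale about $z_n$, not $p$, since $(T_n-p)/d_n$ need not stay bounded.
\item $C_n$ does not have radius of order $d_n$. Its radius is $d_n\bigl(r(1-r)+(1-2r)^2/(4\cos^2(\theta_n/2))\bigr)^{1/2}\gg d_n$, and $C_n$ converges to the osculating circle at $p$ (a line if $\kappa(p)=0$). So the arc of $C_n$ from $w$ to $w'$ avoiding $z,z'$ is the long one. The two isotopy classes of $\gamma'$ are classes of arcs in $\mathbb{C}$ minus the small arc of $\gamma$ from $w'$ to $w$, which is a once-punctured disc in $S^2$, not in $S^2\setminus\{z,z'\}$.
\end{itemize}
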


\begin{proof}

    Since $\gamma$ is smooth, it admits a smooth curvature function. The motivation of the argument is that if there existed an inscribed isosceles trapezoid arbitrarily close to $p$ that was not elegantly or almost-elegantly inscribed of an angle arbitrarily close to $\pi$ then the curvature function would necessarily blow up at $p$. We now formalise this argument.

    Fix a unit speed parametrisation $x+iy=\gamma \colon S^1 = \mathbb{R}/L \to \mathbb{C}$ of $\gamma$, where $L$ is the length of $\gamma$, and denote the curvature function by $\kappa\colon S^1\to\mathbb{R}$. By compactness, $|\kappa(t)|$ has a maximum, which we call $K$. Let $D(p,\varepsilon) \subset \mathbb{C}$ be the open ball of radius $\varepsilon$ around $p$. Since $T_n \rightarrow p$, for any $\varepsilon>0$ there exists an $N_\varepsilon > 0$ such that $T_n \subset \widetilde{\gamma}_\varepsilon = \gamma \cap D(p,\varepsilon)$. Moreover, for $\varepsilon$ sufficiently small $\widetilde{\gamma}_\varepsilon$ consists of a single connected component, whose length we denote by $l_\varepsilon$. Since the integral of curvature gives the angle through which the tangent turns, the tangent of $\widetilde{\gamma}_\varepsilon$ turns through a maximum angle of $l_\varepsilon \cdot K$. Note that $l_\varepsilon \cdot K \rightarrow 0$ as $\varepsilon \rightarrow 0$.
    
    By Cauchy's mean value theorem on the real and imaginary component functions of $\gamma:S^1 \rightarrow \mathbb{C}$, if $\gamma(t_1)=p_1$ and $\gamma(t_2)=p_2$ for $t_1 < t_2$, then there exists a $t\in(t_1, t_2)$ such that $\arg(\gamma'(t)) = \arg(p_2-p_1)$. Therefore, if $\gamma$ were to pass through the three points $p_1, p_2, p_3$ in that order, then the tangent vector of $\gamma$ must turn through an angle of at least $d_\varphi(p_2-p_1,p_3-p_2)$, where $d_\varphi(z_1, z_2)$ is the (absolute) angle between the complex numbers $z_1$ and $z_2$.
    
    Label the points $T_n=\{T^1_n,T^2_n,T^3_n,T^4_n\}$ in an anticlockwise order such that $T^1_n$ and $T^4_n$ are the endpoints of the longer of the parallel edges of the isosceles trapezoid. To establish that $T_n$ is elegantly or almost-elegantly inscribed, it is sufficient to show that $\widetilde{\gamma}_\varepsilon$ passes through the points in the order $T^1_n, \ldots, T^4_n$ or its reverse. Once we recall the tangent of $\widetilde{\gamma}_\varepsilon$ turns through a maximum angle of $l_\varepsilon \cdot K$, by the above discussion this is clear since passing through the points in any other order would require turning through an angle greater than $\pi/2$. Moreover, by an elementary geometric calculation
    \begin{equation}
        \varphi_n=|\arg(T^3_n-T^2_n)-\arg(T^2_n-T^1_n)| = \arctan \left( \frac{1}{1-2r} \cot\left(\frac{\theta_n}{2} \right) \right). \nonumber
    \end{equation}
    Since $\widetilde{\gamma}_\varepsilon$ passes through the points $T_n^1$, $T_n^2$, $T_n^3$ in that order, we require $\varphi_n \leq l_\varepsilon \cdot K$, or equivalently $1 \leq (1-2r)\tan(\theta_n/2)\tan(l_\varepsilon \cdot K)$. We obtain the same inequality when $\gamma$ passes through $T_n$ in the reverse of this order. Taking the limit $\varepsilon \rightarrow 0$ gives $\theta_n \rightarrow \pi$ as claimed.
\end{proof}

\noindent We recall that elegantly inscribed and almost-elegantly inscribed isosceles trapezoids are the two special cases for which we made the computation of the action explicit. Therefore, it is easy to consider the limit of the actions corresponding to the isosceles trapezoids $T_n$, giving the following immediate consequence of Lemma \ref{lemma:elegant_in_limit}.

\begin{corollary}
    Let $\gamma$ be a smooth Jordan curve and $T_n \in T_{r, \theta_n}$ be a sequence of inscribed isosceles trapezoids in $\gamma$ of fixed aspect ratio $r \in (0, 1/2)$ with limit $T_n \rightarrow p \in \gamma$, then the action associated with the inscribed isosceles trapezoids limits to either 
    \begin{equation}
        \mathcal{A}_{r, \theta}(\tau_n) \rightarrow r \cdot \operatorname{Area}(\gamma) \quad \text{or} \quad \mathcal{A}_{r, \theta}(\tau_n) \rightarrow (1-r) \cdot \operatorname{Area}(\gamma). \nonumber
    \end{equation}
\end{corollary}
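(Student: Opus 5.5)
By Lemma \ref{lemma:elegant_in_limit}, $\theta_n \rightarrow \pi$, and after discarding finitely many terms the $T_n$ are either all elegantly inscribed or all almost-elegantly inscribed. It therefore suffices to compute the limit of the action separately in these two cases, using the explicit area descriptions from Sections \ref{sec:elegant_action} and \ref{sec:almost_elegant_action}. Throughout, the diagonal length $|z_n-w_n|$ tends to $0$, since $T_n \rightarrow p$.

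\textbf{Elegant case.} We use $\mathcal{A}_{r,\theta_n}(T_n) = (1-r)(A_1 - A_3) + r(A_2 - A_4)$. The cone areas satisfy $A_1 = \tfrac12\theta_n r^2|z_n-w_n|^2 \rightarrow 0$ and likewise $A_2 \rightarrow 0$. The region $A_3$ is co-bounded by the small arc $\tau_1$ and the arc $p_1$ of $\gamma$ from $z_n$ to $z_n'$ that avoids $w_n,w_n'$. As $T_n \rightarrow p$, this arc lies in $\gamma \cap D(p,\varepsilon)$, so it is the short arc; its length tends to $0$, and hence $A_3 \rightarrow 0$. By contrast, $p_2$ runs from $w_n$ to $w_n'$ avoiding $z_n,z_n'$, which forces it to be the long arc traversing almost all of $\gamma$. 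The signed area co-bounded by $p_2$ and the tiny arc $\tau_2$ therefore tends to $\pm\operatorname{Area}(\gamma)$. This is the special-case computation already recorded, giving $A_4 \rightarrow -\operatorname{Area}(\gamma)$ and so $\mathcal{A} \rightarrow r\operatorname{Area}(\gamma)$.

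\textbf{Almost-elegant case.} Here $p_1$ runs from $z_n$ to $z_n'$ through $w_n'$ and $w_n$, so it is the long arc, while $p_2$ runs from $w_n$ to $w_n'$ through $z_n'$ and $z_n$ and is the short arc. The roles are swapped: the weight-$(1-r)$ region now has area tending to $\pm\operatorname{Area}(\gamma)$, and the weight-$r$ region tends to $0$. The limit is thus $\pm(1-r)\operatorname{Area}(\gamma)$.

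The main obstacle is justifying these area limits and their signs rigorously. For the short arc, the enclosed region lies in a disc of radius $\varepsilon$, so its area is $O(\varepsilon^2)$. For the long arc, the closed curve $p_i \cup \tau_i$ differs from $\gamma$ only inside $D(p,\varepsilon)$. Its winding numbers therefore agree with those of $\gamma$ outside that disc, and the signed area differs from $\pm\operatorname{Area}(\gamma)$ by $O(\varepsilon^2)$.

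The sign is fixed by orientation. The boundary of the preferred capping traverses $p$ forward and $\tau$ backward, and the action subtracts the capping area. In the elegant case this yields $-r\cdot(-\operatorname{Area}(\gamma))$, as already recorded. In the almost-elegant case, $p_1$ goes around $\gamma$ in the same rotational sense relative to the positive-area orientation, by the same winding-number bookkeeping read off Figure \ref{fig:almost_elegant_action}. This gives $+(1-r)\operatorname{Area}(\gamma)$ rather than its negative, and I would check it by evaluating the construction on a circle with a small inscribed trapezoid.
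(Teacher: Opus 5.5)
Your proposal is correct and follows the paper's argument. Lemma~\ref{lemma:elegant_in_limit} reduces to eventually elegant or eventually almost-elegant inscriptions, and the limits $r\cdot\operatorname{Area}(\gamma)$ and $(1-r)\cdot\operatorname{Area}(\gamma)$ are then read off from the area descriptions of Sections~\ref{sec:elegant_action} and~\ref{sec:almost_elegant_action}; the paper only says these are clear from the figures, and your short-arc/long-arc estimates make them explicit. One correction: a circle cannot test the almost-elegant sign, because every trapezoid inscribed in a circle is elegantly inscribed. Instead, the zero-winding condition forces $\pi_d\circ p$ to turn by $-\theta\approx-\pi$, so in both cases the long-arc coordinate traverses $\gamma$ clockwise, its signed area tends to $-\operatorname{Area}(\gamma)$, and the action tends to $+(1-r)\operatorname{Area}(\gamma)$ in the almost-elegant case.
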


\begin{proof}
    By Lemma \ref{lemma:elegant_in_limit}, for sufficiently large $n$ the inscribed isosceles trapezoids are either elegantly inscribed or almost-elegantly inscribed. In the case of elegantly inscribed isosceles trapezoids, it is clear from Figure \ref{fig:elegant_action} that the action must limit to $r \cdot \operatorname{Area}(\gamma)$. Similarly, in the case of almost-elegantly inscribed isosceles trapezoids, it is clear from Figure \ref{fig:almost_elegant_action} that the action must limit to $(1-r) \cdot \operatorname{Area}(\gamma)$.
\end{proof}

\noindent Another interesting consequence of Lemma \ref{lemma:elegant_in_limit} is that we can say more about what points $p \in \gamma$ the sequence of isosceles trapezoids $T_n$ can limit to. 

\begin{corollary}
    Let $\gamma$ be a smooth Jordan curve and $T_n \in T_{r, \theta_n}$ be a sequence of inscribed isosceles trapezoids in $\gamma$ of fixed aspect ratio $r \in (0, 1/2)$ with limit $T_n \rightarrow p \in \gamma$, then $p$ is a vertex of $\gamma$, that is, the derivative of the curvature of $\gamma$ vanishes at $p$.
\end{corollary}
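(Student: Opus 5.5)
The plan is to work in local graph coordinates at $p$ and Taylor expand the curve to third order. The key point is that the isosceles-trapezoid condition, applied to four nearly collinear points on the curve with a fixed non-symmetric spacing, forces the cubic coefficient to vanish. Rotate and translate so that $p=0$ and $T_p\gamma$ is the real axis. Then near $p$ the curve is the graph $y=f(x)$ with
\begin{equation}
f(x)=\tfrac{\kappa}{2}x^2+\tfrac{\kappa_1}{6}x^3+o(x^3), \qquad \kappa=\kappa(p), \quad \kappa_1=\kappa'(p). \nonumber
\end{equation}
The identification $f'''(0)=\kappa'(p)$ holds because $f'(0)=0$: the arc-length derivative of curvature agrees with $f'''(0)$ at that point, as a short computation from $\kappa=f''/(1+f'^2)^{3/2}$ shows.

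By Lemma \ref{lemma:elegant_in_limit}, for large $n$ we have $\theta_n\to\pi$, and $\gamma$ passes through the vertices $T_n^1,\dots,T_n^4$ in order. Here $T_n^1T_n^4$ is the longer parallel side and $T_n^2T_n^3$ the shorter one. Write the $x$-coordinates as $s_1<s_2<t_2<t_1$, where the chord $(s_1,t_1)$ is the longer side and $(s_2,t_2)$ the shorter. Let $\delta_n=|z-w|$ be the diagonal length, so $\delta_n\to0$. The elementary geometry of $T_{r,\theta}$ as $\theta\to\pi$ shows that, after rescaling by $\delta_n$, the four points tend to collinear positions $c\pm\tfrac12\delta_n$ and $c\pm(\tfrac12-r)\delta_n$ up to $o(\delta_n)$. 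Here $c=c_n\to0$. Thus the half-lengths satisfy $d_1\approx\delta_n/2$ and $d_2\approx(1/2-r)\delta_n$, and $d_1^2-d_2^2\approx r(1-r)\delta_n^2\neq0$.

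Next I would encode the trapezoid in two conditions: the two chords are parallel, and they share the same perpendicular bisector (the symmetry axis). The chord slope is
\begin{equation}
\frac{f(t)-f(s)}{t-s}=\tfrac{\kappa}{2}(s+t)+\tfrac{\kappa_1}{6}(s^2+st+t^2)+o(\delta^2), \nonumber
\end{equation}
which is $O(\delta)$ once $c=O(\delta)$. The common-bisector condition says that the difference of the two midpoints is orthogonal to the common chord direction. The $y$-difference of the midpoints is $O(\delta^2)$ and the slope is $O(\delta)$. Hence the $x$-midpoints agree up to $O(\delta^3)$, that is, $s_1+t_1-s_2-t_2=O(\delta^3)$. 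Substituting this into the equality of slopes kills the $\kappa$ term to order $o(\delta^2)$. Writing $s_i=c_i-d_i$ and $t_i=c_i+d_i$, we have $s^2+st+t^2=3c_i^2+d_i^2$, and $c_1-c_2=O(\delta^3)$. The slope equality therefore leaves
\begin{equation}
\tfrac{\kappa_1}{6}\,(d_1^2-d_2^2)=o(\delta_n^2). \nonumber
\end{equation}
Dividing by $\delta_n^2$ and using $d_1^2-d_2^2\sim r(1-r)\delta_n^2$ with $r\in(0,1/2)$ gives $\kappa_1=0$.

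The main obstacle is bookkeeping the error orders honestly. This requires three things. First, $c_n=O(\delta_n)$, which holds because the points lie within $O(\delta_n)$ of $p$. Second, the $o(x^3)$ Taylor remainder must be uniform on $D(p,\varepsilon)$; this is fine since $\gamma$ is smooth. Third, one must justify that both the $x$-ordering and the limiting ratio $d_2/d_1\to 1-2r$ hold. Both come from Lemma \ref{lemma:elegant_in_limit} together with the explicit shape of $T_{r,\theta}$ as $\theta\to\pi$. I would check this last point first, since everything else is Taylor expansion.
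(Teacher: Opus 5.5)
Your route is genuinely different from the paper's and can be made to work, but the first point you flag, $c_n=O(\delta_n)$, is false as justified. The hypothesis $T_n\to p$ only says that the vertices converge to $p$; it gives no bound on their distance to $p$ in terms of $\delta_n$. For example, if $\gamma$ contains a circular arc through $p$, then $\gamma$ inscribes trapezoids in $T_{r,\theta_n}$ with $\theta_n\to\pi$ and diagonal $\delta_n$, placed at distance $\sqrt{\delta_n}$ from $p$.

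When $c_n\gg\delta_n$, your error terms are no longer $o(\delta_n^2)$. The remainder quotient $(R(t)-R(s))/(t-s)$ is only $o(c_n^2)$, and the chord slope is about $\kappa c_n$ rather than $O(\delta_n)$. The repair is to expand at a moving base point $q_n\in\gamma$ next to $T_n$ (say $q_n=T_n^1$), in the frame where $T_{q_n}\gamma$ is horizontal. Smoothness and compactness give $f_n(x)=\tfrac{\kappa(q_n)}{2}x^2+\tfrac{\kappa'(q_n)}{6}x^3+O(x^4)$ with a constant uniform in $n$. Now $c_i=O(\delta_n)$ genuinely holds, and your computation yields $\kappa'(q_n)(d_1^2-d_2^2)=O(\delta_n^3)$. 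Hence $\kappa'(q_n)\to 0$, and $\kappa'(p)=0$ follows by continuity of $\kappa'$.

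Your limiting shape is also miscomputed. The intersection point of the diagonals splits each one as $r:(1-r)$. So as $\theta_n\to\pi$ the short side $zz'$ tends to $c\pm r\delta_n$ and the long side $ww'$ tends to $c\pm(1-r)\delta_n$. Your positions $c\pm\tfrac12\delta_n$, $c\pm(\tfrac12-r)\delta_n$ would give diagonals of length $(1-r)\delta_n$. Hence $d_2/d_1\to r/(1-r)$ and $d_1^2-d_2^2\sim(1-2r)\delta_n^2$, not $r(1-r)\delta_n^2$. This is exactly where $r<1/2$ enters. Your constant stays non-zero at $r=1/2$, where the parallel sides have equal length and the argument gives nothing; that should have been a warning sign.

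With these repairs your argument is a valid alternative to the paper's. The paper never Taylor-expands. It uses Lemma~\ref{lemma:elegant_in_limit} only to place the four concyclic points in order on a short arc $[t_n^1,t_n^4]$. An osculating-circle nesting argument (Tait--Kneser type) then shows that strictly monotone curvature on that arc is incompatible with the arc meeting the circle $C_n$ through $T_n$ four times. So each shrinking arc contains a zero of $\kappa'$.

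The paper's approach uses only concyclicity and ordering, needs no uniform error bookkeeping, and is insensitive to where $T_n$ sits relative to $p$, so the re-centring issue never arises. Yours uses the full trapezoid structure (parallel sides sharing a perpendicular bisector). In exchange it gives a quantitative bound $|\kappa'(q_n)|=O(\delta_n)$, and it shows explicitly that the unequal lengths of the two parallel sides are what force the vertex.
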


\begin{proof}
    Again, by Lemma \ref{lemma:elegant_in_limit}, for sufficiently large $n$ the inscribed isosceles trapezoids are either elegantly inscribed or almost-elegantly inscribed. Assume without loss of generality that $T_n$ is elegantly inscribed, that is, $\gamma$ passes through the points of $T_n$ in the order $T_n^1,\ldots,T_n^4$ (with the same ordering as Lemma \ref{lemma:elegant_in_limit}), and let $t_n^i \in \mathbb{R}$ be such that $\gamma(t_n^i) = T_n^i$. To establish the result, it is therefore sufficient to show that $\widetilde{\gamma}:[t_1,t_4] \rightarrow \mathbb{C}$ contains a vertex. 

    We first note that there is a unique circle $C_n$ that passes through $T_n$, and we denote its radius by $R_n$. At each $T_n^i \in C_n$ there is a unique tangent circle to $\widetilde{\gamma}$ of radius $1/\kappa(t_n^i)$ called the osculating circle which we denote by $C_n^i$. For a contradiction, we assume without loss of generality that $\kappa:[t_1,t_4] \rightarrow \mathbb{R}$ is monotone increasing. The key observation is that $\widetilde{\gamma}$ must remain in the bounded region of $\mathbb{C} \setminus C_n$ for all $t > t_i$ and in the unbounded region of $\mathbb{C} \setminus C_n$ for all $t < t_i$. We then split our argument into two main cases.
    \begin{enumerate}
        \item If $\widetilde{\gamma}'(t_n^2)$ points inside $C_n$, then as $T_n^1$ lies on $\widetilde{\gamma}$, $C_n^2$ must intersect $C_n$ on $C_n^{12}$ the shorter arc of $C_n$ between $T_n^1$ and $T_n^2$ and therefore the radius of $C_n^i$ must be less than $R_i$. The curve $\widetilde{\gamma}$ must then remain within $C_n^2$ for all $t \geq t_2$, contradicting the fact that $T_n^3$ lies on $C_n \setminus C_n^{12}$.
        \item If $\gamma'(t_n^2)$ points outside $C_n$, then for some $t \in (t_2, t_3]$ we have $\widetilde{\gamma}(t) \in C_n$ and $\widetilde{\gamma}'(t)$ points inside $C_n$. The arguments of Case 1 may then be repeated to obtain a contradiction of where $T_n^4$ must lie on $C_n$.
    \end{enumerate}
    In the above cases, we have omitted the possibility that $\widetilde{\gamma}'(t_n^2)$ is tangential to $C_n$. If $1/\kappa(t_n^2) \leq R_n$ then the result is immediate as $\widetilde{\gamma}$ must remain in the bounded region of $\mathbb{C} \setminus C_n$ for all $t > t_n^2$, and if $1/\kappa(t_n^2) > R_n$ then one can repeat the argument of Case 2.
\end{proof}

\paragraph{Quadrisecants}

The second possibility is that the isosceles trapezoids limit to a \emph{quadrisecant}, that is, the vertices of the isosceles trapezoid limit to four different points that lie on a straight line. Each quadrisecant $Q \in T_{r,\pi}$ is the limit of two disjoint one parameter families of inscriptions of isosceles trapezoids $T^i_\theta$ parametrised by $\theta \in (\pi-\varepsilon, \pi)$ for $\varepsilon$ sufficiently small. An example is given in Figure \ref{fig:quadrasecant}. Therefore, each quadrisecant has two associated actions given by the limits of the actions of the inscribed isosceles trapezoids $T^i_\theta$ as $\theta \rightarrow \pi$. An interesting result is that the sum of the two actions associated with any given quadrisecant is simply the area of the Jordan curve.

Unfortunately, despite the duality result above, the action in the quadrisecant case is significantly more difficult to analyse than in the shrinkout case. In particular, an action associated with a quadrisecant can be made arbitrarily large or small simply by varying the Jordan curve, whilst keeping the area of the Jordan curve constant and keeping the points at which the quadrisecant is inscribed fixed. It is this lack of control over the action in the limit as $\theta \rightarrow \pi$ that makes the isosceles trapezoid case more difficult to approach than the rectangular case.

\begin{figure}[h]
    \centering

\begin{tikzpicture}[scale=0.8]

	\begin{pgfonlayer}{nodelayer}
		\node [style=new style 3] (8) at (-3, 1.5) {};
		\node [style=new style 3] (9) at (1.25, 0.75) {};
		\node [style=new style 3] (10) at (-1.25, 0.75) {};
		\node [style=new style 3] (11) at (3, 1.5) {};
		\node [style=new style 1] (12) at (-3, -1.5) {};
		\node [style=new style 1] (13) at (1.25, -0.75) {};
		\node [style=new style 1] (14) at (3, -1.5) {};
		\node [style=new style 1] (15) at (-1.25, -0.75) {};
		\node [style=new style 2] (16) at (-3.25, 0) {};
		\node [style=new style 2] (17) at (-1.5, 0) {};
		\node [style=new style 2] (18) at (1.5, 0) {};
		\node [style=new style 2] (19) at (3.25, 0) {};
	\end{pgfonlayer}
	\begin{pgfonlayer}{edgelayer}
		\draw [style=new edge style 3] (11) to (10);
		\draw [style=new edge style 3] (8) to (9);
		\draw (16) to (19);
		\draw [in=-90, out=105] (12) to (16);
		\draw [in=255, out=90] (16) to (8);
		\draw [in=120, out=60] (8) to (11);
		\draw [in=90, out=-75] (11) to (19);
		\draw [in=75, out=-90] (19) to (14);
		\draw [in=-120, out=-120, looseness=2.00] (14) to (13);
		\draw [in=-90, out=60] (13) to (18);
		\draw [in=-60, out=90] (18) to (9);
		\draw [in=30, out=150, looseness=1.25] (9) to (10);
		\draw [in=90, out=-135] (10) to (17);
		\draw [in=120, out=-90] (17) to (15);
		\draw [in=-60, out=-60, looseness=2.00] (15) to (12);
		\draw [style=new edge style 2] (8) to (10);
		\draw [style=new edge style 2] (10) to (9);
		\draw [style=new edge style 2] (9) to (11);
		\draw [style=new edge style 2] (8) to (11);
		\draw [style=new edge style 1] (12) to (14);
		\draw [style=new edge style 1] (14) to (13);
		\draw [style=new edge style 1] (13) to (15);
		\draw [style=new edge style 1] (15) to (12);
		\draw [style=new edge style 4] (14) to (15);
		\draw [style=new edge style 4] (12) to (13);
	\end{pgfonlayer}

\end{tikzpicture}

    \caption{An example of a quadrisecant. In this case, the quadrisecant (black) can be viewed as the limit of a sequence $T_n \in T_{r, \theta_n}$ of either elegantly inscribed (blue) or almost-elegantly inscribed (red) isosceles trapezoids as $\theta_n \rightarrow \infty$.}
    \label{fig:quadrasecant}
\end{figure}
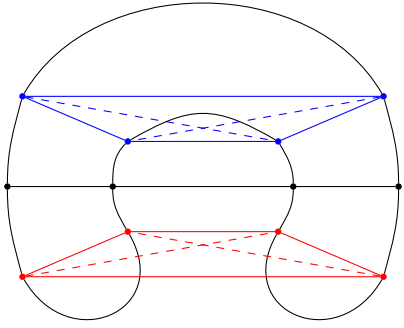

\subsection{Spectral invariants}

We recall that the action functional can be viewed as the analogue of the Morse function in Morse theory. In Morse theory, the flow lines counted in defining the differential always point in the direction of decreasing function value. Similarly, the Jordan Floer differential $\partial_{\mathrm{JF}}$ decreases the action. Consequently, Jordan Floer homology $\mathrm{JF}(\gamma,r,\theta)$ is filtered by the action. We can then define the spectral invariant $l_{\alpha}(\gamma,r,\theta)$ of a homology class $\alpha$ as the lowest filtration grading in which a representative of $\alpha$ is supported.

To be more precise, the chain complex $\operatorname{JFC}(\gamma,r,\theta,h_t)$ is filtered by the action $\mathcal{A}_{H_t+h_t}:\mathcal{G}(\gamma, r, \theta, h_t) \rightarrow \mathbb{R}$, that is, we have subgroups $\operatorname{JFC}^a(\gamma,r,\theta,h_t) \subseteq \operatorname{JFC}(\gamma,r,\theta,h_t)$ generated by the elements $\tau \in \mathcal{G}(\gamma, r, \theta, h_t)$ such that $\mathcal{A}_{H_t+h_t}(\tau) \leq a$. Moreover, for any admissible complex structure $J_t$, the differential $\partial_{(\gamma, r, \theta, h_t, J_t)}$ respects the filtration in the sense that if a trajectory $\tau'$ of $H_t+h_t$ appears with a non-zero coefficient in $\partial_{(\gamma, r, \theta, h_t, J_t)}(\tau)$ for some $\tau \in \mathcal{G}(\gamma, r, \theta, h_t)$, then $\mathcal{A}_{H_t+h_t}(\tau') < \mathcal{A}_{H_t+h_t}(\tau)$. To see this formally, we include the following standard result.

\begin{lemma}[Action-energy identity] \label{lemma:energy_of_strip}
    With the same notation used in the definition of continuation maps, let $u \in \mathcal{M}(\tau_1, \tau_2; h_{st}, J_{st})$ be a pseudo-holomorphic strip between the trajectories $\tau_1 \in \mathcal{G}(\gamma,r, \theta_1, h_t^1)$ and $\tau_2 \in \mathcal{G}(\gamma,r, \theta_2, h_t^2)$, then its energy is given by
    \begin{equation}
        E(u) = \mathcal{A}_{H^1+h_t^1}(\tau_1)-\mathcal{A}_{H^2+h_t^2}(\tau_2) + \int_\Sigma \bigl(\partial_s (H_{st}+h_{st})\bigr) \circ u(s,t) \; dt \, ds. \nonumber
    \end{equation}
\end{lemma}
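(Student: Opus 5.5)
The plan is to compute $E(u)$ directly from the Cauchy--Riemann--Floer equation and then convert the resulting integral into action differences via Stokes' theorem on a preferred capping. Write $K_{st} = H_{st}+h_{st}$, with $K_{st} = H^1+h_t^1$ for $s \ll 0$ and $K_{st}=H^2+h^2_t$ for $s\gg 0$.

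\textbf{Energy density.} Using (CRF) in the form $\partial_s u = -J_{st}(\partial_t u - X_{K_{st}}\circ u)$ and compatibility $|v|_J^2=\omega(v,Jv)$, I would compute
\begin{equation}
|\partial_s u|_J^2 = \omega(\partial_s u, J\partial_s u) = \omega(\partial_s u, \partial_t u - X_{K_{st}}) = \omega(\partial_s u,\partial_t u) + dK_{st}(\partial_s u), \nonumber
\end{equation}
where the last step uses $\iota_{X_K}\omega = dK$ and antisymmetry. Next I would write $dK_{st}(\partial_s u) = \partial_s\bigl(K_{st}\circ u\bigr) - (\partial_s K_{st})\circ u$. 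Integrating over $\Sigma_S=[-S,S]\times[0,1]$ gives
\begin{equation}
\int_{\Sigma_S} |\partial_s u|^2 = \int_{\Sigma_S} u^*\omega + \int_0^1 K_{St}(u(S,t))\,dt - \int_0^1 K_{-St}(u(-S,t))\,dt - \int_{\Sigma_S} (\partial_s K_{st})\circ u. \nonumber
\end{equation}
The sign convention for $u^*\omega$ is $\omega(\partial_s u,\partial_t u)\,ds\,dt$.

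\textbf{Area term and limit.} As $S\to\infty$, (LIM) makes the Hamiltonian boundary terms converge to $\int K\circ\tau_2 - \int K\circ\tau_1$. For the symplectic area, I would take a preferred capping $\widehat{\tau}_1$ and glue the strip to it. The key point is that for the diagonal-avoiding strips relevant here, $u$ avoids $\Delta(\mathbb{C})$. Then $\widehat{\tau}_1 \# u$ is a capping of $\tau_2$ disjoint from the diagonal, hence a preferred capping by the uniqueness lemma for preferred capping classes. Thus
\begin{equation}
\int \widehat{\tau}_2^*\omega = \int\widehat{\tau}_1^*\omega + \int_\Sigma u^*\omega \nonumber
\end{equation}
up to the orientation conventions of the capping, which I must check. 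The boundary pieces on $\gamma\times\gamma$ contribute no area because the torus is Lagrangian, so the homotopy class alone determines the area. Substituting and collecting terms yields
\begin{equation}
E(u) = \mathcal{A}(\tau_1)-\mathcal{A}(\tau_2)+\int_\Sigma(\partial_s K_{st})\circ u. \nonumber
\end{equation}

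\textbf{Main obstacle.} The delicate step is this gluing/orientation bookkeeping: verifying that concatenating a preferred capping with $u$ has the correct orientation relative to the convention in the definition of the action, so that the signs come out exactly as stated. A second subtlety is justifying the preferred-capping claim, which requires $u$ to avoid the diagonal (or at least to have zero winding about it). I would state that this holds for the strips in $\mathcal{M}^\Delta$ used throughout. Convergence of the integrals is routine, since $\partial_s K$ is compactly supported in $s$ and the energy is finite.
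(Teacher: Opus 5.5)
\textbf{The final step does not follow; it yields the opposite sign.} Your energy-density computation and the Stokes step are correct. This is the standard route; the paper gives no proof and quotes the identity as standard. The gap is exactly the bookkeeping you postponed.

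Two signs are already fixed by your own display, and no capping convention can change them:
\begin{itemize}
\item the Hamiltonian boundary terms enter as $+\int_0^1 K\circ\tau_2\,dt-\int_0^1 K\circ\tau_1\,dt$;
\item the homotopy term enters as $-\int_\Sigma(\partial_s K_{st})\circ u$.
\end{itemize}
Since $\mathcal{A}$ contains $+\int K$, these are the signs of $\mathcal{A}(\tau_2)-\mathcal{A}(\tau_1)-\int_\Sigma(\partial_s K_{st})\circ u$, which is the opposite of the stated right-hand side.

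Your capping relation also has the wrong sign. A capping of $\tau_2$ starts at $\tau_2$ (capping parameter $0$), runs back through $u$ to $\tau_1$, and then through $\widehat{\tau}_1$ into $\gamma\times\gamma$. So the strip is traversed with $s$ reversed, and $\int\widehat{\tau}_2^*\omega=\int\widehat{\tau}_1^*\omega-\int_\Sigma u^*\omega$. You can test this on $u(s,t)=e^{\pi(s+it)}$ in $(\mathbb{C},dx\wedge dy)$ with boundary on $\mathbb{R}$ and $K=0$. The slice at $s$ is a semicircle of radius $\rho=e^{\pi s}$. Its capping, oriented as in the paper, has area $-\pi\rho^2/2$, so its action $\pi\rho^2/2$ increases with $s$.

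With the correct relation, your computation gives
\[
E(u)=\mathcal{A}_{H^2+h^2_t}(\tau_2)-\mathcal{A}_{H^1+h^1_t}(\tau_1)-\int_\Sigma\bigl(\partial_s(H_{st}+h_{st})\bigr)\circ u\,dt\,ds .
\]
Equivalently, take the conventions as the paper writes them: $\iota_{X_H}\omega=dH$, $|v|_J^2=\omega(v,Jv)$, (CRF) as stated, and cappings running from $\tau$ at parameter $0$ to $\gamma\times\gamma$. Then the metric gradient of the action is $J_t(X_K-\dot\sigma)$, so (CRF) is the \emph{positive} gradient flow of the action, and the identity holds with the opposite overall sign to the statement.

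\textbf{What you need to do.} Substituting and collecting terms produces the negative of the lemma, not the lemma. To prove the lemma as written, you must change a convention explicitly. That form is the one the paper needs for the differential to lower action. For instance, take the Floer equation to be the negative gradient flow $\partial_s u=J_{st}(\partial_t u-X_{K_{st}}\circ u)$. Then $|\partial_s u|_J^2=-\omega(\partial_s u,\partial_t u)-dK_{st}(\partial_s u)$. Your Stokes argument together with the corrected capping relation then gives exactly $E(u)=\mathcal{A}(\tau_1)-\mathcal{A}(\tau_2)+\int_\Sigma(\partial_s K_{st})\circ u$. Whatever fix you choose, state it and carry the signs through. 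As written, your proof asserts an identity that contradicts the formula you derived one line earlier.

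\textbf{Diagonal-avoiding caveat.} Your restriction to diagonal-avoiding strips is the right caveat and belongs in the statement. For a strip whose boundary crosses $\Delta(\gamma)$, the glued capping need not lie in the preferred class. The identity can then be off by a period of $\omega$ on $\pi_1(\gamma\times\gamma)$.
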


\noindent It is clear from the definition of the energy of a strip that $E(u) \geq 0$, with equality if and only if the strip is constant in the $s$ direction. The result $\mathcal{A}_{H_t+h_t}(\tau') < \mathcal{A}_{H_t+h_t}(\tau)$ then follows immediately from the fact that, when defining the differential, $H_{st}$ and $h_{st}$ are constant in $s$.

Since the differential respects the filtration on $\operatorname{JFC}(\gamma,r,\theta,h_t)$, the subgroups $\operatorname{JFC}^a(\gamma,r,\theta,h_t)$ form chain complexes with the differential $\partial_{(\gamma, r, \theta, h_t, J_t)}$. Therefore, for each homology class $\alpha \in \operatorname{JF}(\gamma,r,\theta,h_t,J_t)$ we can define the associated spectral invariant 
\begin{equation}
    l_{\alpha}(\gamma,r,\theta,h_t,J_t) = \inf \{ a : \alpha \text{ is represented by a cycle in } \operatorname{JFC}^a(\gamma,r,\theta,h_t) \text{ with } \partial_{(\gamma, r, \theta, h_t, J_t)}\}. \nonumber
\end{equation}
We note that the spectral invariant must be equal to the action of some non-constant trajectory, more precisely $l_{\alpha}(\gamma,r,\theta,h_t,J_t) = \mathcal{A}_{H_t+h_t}(\tau)$ for some $\tau \in \mathcal{G}(\gamma, r, \theta, h_t)$.

However, we do not want the spectral invariant to depend on the data $h_t$ and $J_t$. Recall that continuation maps provide isomorphisms between the homology groups $\operatorname{JF}(\gamma,r,\theta, h_t, J_t)$ for different Hamiltonian perturbations $h_t$ and almost-complex structures $J_t$. Since we fix $\theta$, $H_{st}$ is constant in $s$. Therefore, by Lemma \ref{lemma:energy_of_strip}, these chain maps are filtered of filtration degree 
\begin{equation}
    \varepsilon(h_{st}) = \int_\Sigma \bigl(\partial_s h_{st}\bigr) \circ u(s,t) \; dt \, ds, \nonumber
\end{equation} 
which depends only on the Hamiltonian perturbations $h_{st}$. We can therefore immediately remove the dependence of the spectral invariant on $J_t$. Moreover, $\varepsilon(h_{st})$ can be made arbitrarily small by sending $h_{st} \rightarrow 0$ in the $C^\infty$ topology. This also forces $h^1_t, h^2_t \rightarrow 0$. Therefore, we can remove the dependence on $h_t$ by considering the limit $h_t \rightarrow 0$. We then arrive at a spectral invariant $l_\alpha(\gamma, r, \theta)$ for homology classes $\alpha \in \operatorname{JF}(\gamma, r, \theta)$.

\begin{definition}[Spectral invariants]
    Let $\gamma$ be a smooth Jordan curve, $r \in (0,1/2]$ be an aspect ratio, and $\theta \in (0,\pi)$ be an angle. Then for any homology class $\alpha_i \in \operatorname{JF}_i(\gamma,r,\theta)$  we define the \emph{spectral invariants} $l_{\alpha_i}(\gamma, r, \theta) \in \mathbb{R}$ by
    \begin{equation}
        l_{\alpha_i}(\gamma, r, \theta) = \lim_{n \rightarrow \infty} l_{\alpha_i}(\gamma,r, \theta, h_t^n, J_t^n), \nonumber
    \end{equation}
    where $h_t^n$ is a sequence of admissible Hamiltonian perturbations with $h_t^n \rightarrow 0$ and $J_t$ is a sequence of admissible almost-complex structures.
\end{definition} 

\noindent We note the abuse of notation, using $\alpha_i$ to denote both the homology class $\alpha_i \in \operatorname{JF}_i(\gamma, r, \theta)$ and the homology class $\alpha_i \in \operatorname{JF}_i(\gamma, r, \theta, h_t, J_t)$. The relationship between them is provided by the isomorphisms induced from the continuation maps.

Recalling that $\operatorname{JF}_*(\gamma, r, \theta) = (\mathbb{F}_2)_{(2)} \oplus (\mathbb{F}_2)_{(1)}$, there is only one homology class in each degree, so we can simply use the notation $l_i$ to denote $l_{\alpha_i}$. Moreover, we call $l_i$ the homological degree $i$ spectral invariant. Importantly, we note that $l_i(\gamma,r,\theta)$ is always the action of some inscribed isosceles trapezoid in $\gamma$ of aspect ratio $r$ and angle $\theta$.

We will commonly view the spectral invariants as functions of the angle $\theta$, that is, $l_i(\gamma, r, \cdot):(0,\pi) \rightarrow \mathbb{R}$. By the result in the following subsection bounding the variation of the spectral invariant in $\theta$, we can extend the spectral invariant to a continuous, monotone function
\begin{equation}
    l_i(\gamma, r, \cdot):[0, \pi] \rightarrow \mathbb{R}_{\geq 0} \nonumber
\end{equation}
with $l_i(\gamma, r, 0) = 0$. Similarly, we can generalise the spectral invariant to non-smooth Jordan curves $\gamma$ by considering a sequence of smooth Jordan curves $\gamma_n$ limiting to $\gamma$. We can then define the spectral invariant of $\gamma$ by
\begin{equation}
    l_i(\gamma, r, \theta) = \lim_{n \rightarrow \infty} l_i(\gamma_n, r, \theta). \nonumber
\end{equation}

For a detailed introduction to spectral invariants in the case of monotone Lagrangians, see \cite{Leclercq-Zapolsky:2015}.

\subsubsection{Variation of the spectral invariant under change of the angle}

We now wish to bound the variation of the spectral invariant in $\theta$. For this result, we follow \cite{Greene-Lobb:Floer-homology} closely, but we include the proof here for the convenience of the reader.

\begin{proposition} \label{prop:bounded_spectral_variation}
    Let $\gamma$ be a smooth Jordan curve of radius $\operatorname{Rad}(\gamma)$, then for any $r \in (0,1/2]$ and any $0 < \theta_1 < \theta_2 < \pi$ the spectral invariant satisfies 
    \begin{equation}
        0 < l_i(\gamma, r, \theta_2) - l_i(\gamma,r,\theta_1) \leq 2r(1-r)\operatorname{Rad}(\gamma)^2 \cdot (\theta_2 - \theta_1). \nonumber
    \end{equation}
\end{proposition}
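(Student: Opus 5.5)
The plan is to use the continuation map from $\operatorname{JFC}(\gamma,r,\theta_1,h^1_t)$ to $\operatorname{JFC}(\gamma,r,\theta_2,h^2_t)$ built from the monotone interpolation $\theta_s^{12}$. We then read off its filtration shift from the action-energy identity, Lemma \ref{lemma:energy_of_strip}. The continuation maps induce the isomorphisms $\operatorname{JF}_i(\gamma,r,\theta_1)\to\operatorname{JF}_i(\gamma,r,\theta_2)$. Each degree contains a single nonzero class, so $\alpha_i$ is sent to $\alpha_i$. Hence, if the continuation map lowers action by at most $c$ (resp. raises it by at most $c'$), we obtain $l_i(\gamma,r,\theta_2)\le l_i(\gamma,r,\theta_1)+c$. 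The reverse continuation map gives the opposite inequality. Throughout we work with small perturbations $h_t^n\to 0$ and $h_{st}\to 0$ and pass to the limit at the end, so the $\partial_s h_{st}$ contribution to the shift vanishes in the limit.

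For a strip $u$ counted by $\Phi$, Lemma \ref{lemma:energy_of_strip} with $E(u)\ge 0$ gives
\begin{equation}
\mathcal{A}(\tau_2)\le \mathcal{A}(\tau_1)+\int_\Sigma \partial_s H^{12}_{st}\circ u\,dt\,ds+\varepsilon(h_{st}),\nonumber
\end{equation}
where $\partial_s H^{12}_{st}=\tfrac12 r(1-r)\beta(t)\,\dot\theta^{12}_s\,|z-w|^2$. Since $\dot\theta_s\ge 0$ and $\int\dot\theta_s\,ds=\theta_2-\theta_1$, the integral is bounded by $\tfrac12 r(1-r)(\theta_2-\theta_1)\sup|z-w|^2$, where the supremum is taken over the part of the strip in the $s$-dependent region. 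The key geometric input is then a bound $|z-w|\le 2\operatorname{Rad}(\gamma)$ on that region. This yields the upper bound $2r(1-r)\operatorname{Rad}(\gamma)^2(\theta_2-\theta_1)$ for the increase of the spectral invariant. As in Greene and Lobb, I would justify the bound by a maximum principle. Center coordinates so that $\gamma\subset \overline{D}(0,\operatorname{Rad}(\gamma))$. In the coordinates $(u,v)=F_r(z,w)$, the Hamiltonian flow is a rotation in $v$ and the standard part of $J$ is rotation-invariant. A subharmonicity argument for $|z|^2,|w|^2$ (or for $|v|^2$) along strips, with boundary on $\gamma\times\gamma$, then keeps each coordinate in the disc of radius $\operatorname{Rad}(\gamma)$. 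Alternatively, one can choose $J_{st}$ standard outside a compact set and use convexity there.

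For the lower bound (strict positivity), run the reverse continuation map from $\theta_2$ to $\theta_1$, along which $\dot\theta_s\le 0$. Then $\partial_s H\le 0$, so the action drops along the map and $l_i(\gamma,r,\theta_1)\le l_i(\gamma,r,\theta_2)$. Strictness requires more than this. I would use that $\partial_s H_{st}\circ u$ is strictly negative wherever $u$ lies off the diagonal on the support of $\beta\dot\theta_s$, and diagonal-avoiding strips stay at distance at least $\operatorname{Width}_r^{\theta}(\gamma)/2$ there. This gives a quantitative negative contribution of order $r(1-r)(\theta_2-\theta_1)\operatorname{Width}^2$, uniform as $h\to 0$. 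Alternatively, I would derive strictness from the nonvanishing of the limiting derivative, as in \cite{Greene-Lobb:Floer-homology}.

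I expect the main obstacle to be making the bound on $|z-w|$ over the whole $s$-dependent region rigorous. This needs the maximum-principle/confinement argument for $s$-dependent Floer strips with a non-standard $J$ away from the diagonal. It also needs care that the radius constant enters as $2\operatorname{Rad}$ for $|z-w|$, so that the constant comes out as exactly $2r(1-r)\operatorname{Rad}(\gamma)^2$. The strict lower inequality is the second delicate point, since it needs a uniform positive lower bound that survives the limit $h_t\to 0$.
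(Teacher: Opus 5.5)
Your route bounds the filtration shift of the $\theta$-continuation map via Lemma~\ref{lemma:energy_of_strip}, which is not the paper's route, and two steps in it do not go through as written.

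\textbf{Upper bound.} Your estimate needs $|z-w|\le 2\operatorname{Rad}(\gamma)$ at every point of the strip where $\dot\theta_s\beta(t)\neq 0$, and your justification does not supply it. The admissible $J_t$ are standard only near $\Delta(\mathbb{C})$ and for $t\notin(0.1,0.9)$. Elsewhere they are generic, as transversality requires, so none of $|z|^2$, $|w|^2$, $|z-w|^2$ is subharmonic along strips. Moreover $G_{r,\theta}$ does not preserve $|z|$ or $|w|$, so even for $J=J_{\operatorname{std}}$ there is no reason for each coordinate to stay in the disc of radius $\operatorname{Rad}(\gamma)$.

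The $|z-w|^2$ version can be rescued. Where $J=J_{\operatorname{std}}$ and the perturbation vanishes, $V=\sqrt{r(1-r)}\,(z-w)$ satisfies $\partial_sV+i\partial_tV=\theta_s\beta(t)V$. Hence $\Delta|V|^2=2\dot\theta_s\beta(t)|V|^2+4|\partial_sV|^2\ge 0$ for increasing $\theta_s$. But this only works after three further steps: impose $J_{st}=J_{\operatorname{std}}$ and $h_{st}=0$ on $\{|z-w|>\operatorname{diam}(\gamma)+\delta\}$, re-prove transversality under that constraint, and let $\delta\to 0$. Requiring $J$ to be ``standard outside a compact set'' by itself gives a constant depending on that set, not $2\operatorname{Rad}(\gamma)$.

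\textbf{Strict inequality.} Here the problem is more basic. You cannot assume that diagonal-avoiding strips stay a distance $\operatorname{Width}^\theta_r(\gamma)/2$ from $\Delta(\mathbb{C})$. That bound is a property of non-constant \emph{trajectories}. It is proved using that $H_t+h_t=H_t$ near the diagonal and that flow lines of $H_t$ preserve $|z-w|$. Strips in $\mathcal{M}^\Delta$ are only required to miss the diagonal, and nothing prevents their boundary arcs in $(\gamma\times\gamma)\setminus\Delta(\gamma)$ or interior points from coming arbitrarily close to it. So continuation maps alone give only $l_i(\gamma,r,\theta_1)\le l_i(\gamma,r,\theta_2)$, with no quantitative gap surviving $h\to 0$.

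\textbf{How the paper avoids both issues.} It never evaluates $H$ on strips. Along a Cerf-generic path $h^\theta_t$, it differentiates $l_i=\mathcal{A}_{H^\theta_t+h^\theta_t}(\tau_\theta)$ along a continuous family of trajectories. Since trajectories are critical points of the action, only the explicit $\theta$-derivative survives. This is $\frac{1}{\theta}\int_0^1 H^\theta_t\circ\tau_\theta\,dt=\frac12 r(1-r)|z-w|^2$, up to terms vanishing as $h\to 0$. A non-constant trajectory with endpoints on $\gamma\times\gamma$ automatically satisfies $\operatorname{Width}_{r,\theta}(\gamma)/2\le |z-w|\le 2\operatorname{Rad}(\gamma)$. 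Hence $\frac18 r(1-r)\operatorname{Width}_{r,\theta}(\gamma)^2\le \frac{d}{d\theta}l_i\le 2r(1-r)\operatorname{Rad}(\gamma)^2$, and integrating gives both inequalities.

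Your fallback for strictness is exactly this argument. Using it for the upper bound as well removes the need for any $C^0$ estimate on strips.
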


\begin{proof}
By standard arguments of Cerf theory, we can choose a smooth path of Hamiltonian perturbations $h_t^\theta$ for $\theta \in [\theta_1, \theta_2]$ such that $h_t^{\theta_0}$ is an admissible Hamiltonian perturbation for the triple $(\gamma, r, \theta_0)$ for all but finitely many critical values $\theta_c \in [\theta_1, \theta_2]$. Importantly, such a path of Hamiltonian perturbations can be taken to be arbitrarily small with respect to the $C^k$ norm for any fixed $k \geq 0$. See \cite{Greene-Lobb:Floer-homology, Floer:1988} for the details of this argument.

Take $\theta_0$ to be a non-critical angle of $h_t^\theta$ and $\tau_\theta$ to be a continuous path of trajectories of the Hamiltonians $H_t^\theta + h_t^\theta$ (where $H_t^\theta=H_t$, but with the superscript to emphasise the dependence on theta) such that $\tau_{\theta_0}$ is a trajectory whose action is represented by the spectral invariant. Such a path of trajectories exists locally, since $h_t^\theta$ remains admissible locally. We can then calculate
\begin{eqnarray}
    \frac{d}{d\theta}l_i(\gamma, r, \theta, h^\theta_t, J^\theta_t)\Big|_{\theta=\theta_0} &=& \frac{d}{d\theta} \Bigl( \mathcal{A}_{H^\theta_t+h^\theta_t}(\tau_\theta) \Bigr) \bigg|_{\theta=\theta_0} \nonumber \\
    &=& \frac{d}{d\theta} \Bigl( \mathcal{A}_{H^\theta_t+h^\theta_t}(\tau_{\theta_0}) \Bigr) \bigg|_{\theta=\theta_0} + \frac{d}{d\theta} \Bigl( \mathcal{A}_{H^{\theta_0}_t+h^{\theta_0}_t}(\tau_\theta) \Bigr) \bigg|_{\theta=\theta_0} \nonumber
\end{eqnarray}
We note that trajectories are critical points of the action, and therefore the second term vanishes. It therefore remains to calculate the first term
\begin{eqnarray}
    \frac{d}{d\theta} \Bigl( \mathcal{A}_{H^\theta_t+h^\theta_t}(\tau_{\theta_0}) \Bigr) \bigg|_{\theta=\theta_0} = \int_0^1 \frac{d}{d\theta} \big(H^\theta_t+h^\theta_t\big) \Big|_{\theta_0} \!\!\! \circ \tau_{\theta_0}(t) \;dt - \frac{d}{d\theta} \left( \int_{[0,1]^2} \widehat{\tau}^*_{\theta_0} \omega \right) \Bigg|_{\theta=\theta_0}. \nonumber
\end{eqnarray}
Again we can easily see that the second term is zero, this time as it is independent of $\theta$. Recalling that the Hamiltonian depends linearly on the angle $\theta$, we can easily consider the derivative of $H_t^\theta$. The first term can then be calculated as
\begin{eqnarray}
    \int_0^1 \frac{d}{d\theta} \big(H^\theta_t+h^\theta_t\big) \Big|_{\theta_0} \!\!\! \circ \tau_{\theta_0}(t) \;dt = \frac{1}{\theta_0} \int_0^1 H^{\theta_0}_t \circ \tau_{\theta_0}(t) \;dt + \int_0^1 \frac{d}{d\theta} \big(h^\theta_t\big) \Big|_{\theta_0} \!\!\! \circ \tau_{\theta_0}(t)  \;dt. \nonumber
\end{eqnarray}
We recall that $\tau_{\theta_0}$ is a trajectory of the Hamiltonian $H^{\theta_0}_t + h^{\theta_0}_t$ and therefore remains a distance of at least $\operatorname{Width_{r, \theta_0}}/2$ away from the diagonal. Moreover, as $h_t^{\theta_0} \rightarrow 0$ the trajectory $\tau_{\theta_0}$ limits pointwise to a trajectory $\widetilde{\tau}$ of $H_t^{\theta_0}$. Defining $\varepsilon(h_t^{\theta_0}) = \sup \{ | \tau_{\theta_0}(t) - \widetilde{\tau}(t)| : t \in[0,1] \}$ we have
\begin{equation}
    \frac{1}{8}r(1-r)\theta_0 \cdot \operatorname{Width_{r, \theta_0}}(\gamma)^2 \leq \int_0^1 H^{\theta_0}_t \circ \tau_{\theta_0}(t) \;dt \leq \frac{1}{2}r(1-r)\theta_0 \cdot \big(2 \operatorname{Rad}(\gamma) + \varepsilon(h_t) \big)^2. \nonumber
\end{equation}
Note that $\varepsilon(h_t^{\theta_0}) \rightarrow 0$ as $h_t^{\theta_0} \rightarrow 0$. We also note that the second integral can be made arbitrarily small by taking the path of Hamiltonian perturbations $h_t^\theta$ arbitrarily small. Therefore, considering the limit as $h_t^\theta \rightarrow 0$, we obtain
\begin{equation}
    0 < \frac{d}{d\theta}l(\gamma, r, \theta) \leq 2r(1-r) \operatorname{Rad}(\gamma)^2. \nonumber
\end{equation}
The fundamental theorem of calculus then gives the desired result.
\end{proof}

\subsubsection{Triangle inequality for the spectral invariants}

Another important property of spectral invariants is that they satisfy a triangle inequality, which can be found in \cite{Leclercq-Zapolsky:2015} and was previously used in \cite{Greene-Lobb:Positive-measure} in the rectangle setting. In our setting, the triangle equality of relevance is purely for the spectral invariant in homological degree 2 and it plays an important role in the obstruction of shrinkout.

\begin{lemma} \label{lemma:spectral_triangle_inequality}
    Let $\gamma$ be a smooth Jordan curve, $r \in (0,1/2]$ an aspect ratio, and $\theta_1, \theta_2 \in [0, \pi]$ angles such that $\theta_1 + \theta_2 \leq \pi$, then we have the triangle inequality for the homological degree 2 spectral invariant
    \begin{equation}
        l_2(\gamma,r, \theta_1 + \theta_2) \leq l_2(\gamma,r,\theta_1) + l_2(\gamma,r,\theta_2). \nonumber
    \end{equation}
\end{lemma}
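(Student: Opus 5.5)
The plan is to deduce the inequality from the standard pair-of-pants product in Lagrangian Floer theory, as in \cite{Leclercq-Zapolsky:2015}, adapted to the diagonal-avoiding setting. The starting observation is that $H^{\theta_1}$ and $H^{\theta_2}$ are both quadratic in $\pi_d(z,w)=z-w$ and generate commuting rotations. Their flows therefore compose:
\begin{equation}
\Phi^1_{H^{\theta_2}}\circ\Phi^1_{H^{\theta_1}} = G_{r,\theta_2}\circ G_{r,\theta_1} = G_{r,\theta_1+\theta_2}. \nonumber
\end{equation}
Concatenating a trajectory of $H^{\theta_1}_t$ with a $G_{r,\theta_1}$-transported trajectory of $H^{\theta_2}_t$ gives a trajectory of a Hamiltonian generating $G_{r,\theta_1+\theta_2}$. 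The actions add under this concatenation, because the Hamiltonian integrals add and the cappings glue. The constraint $\theta_1+\theta_2\le\pi$ guarantees that the target complex $\operatorname{JFC}(\gamma,r,\theta_1+\theta_2)$ is defined.

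\textbf{Step 1: define the product.} I would define a chain map
\begin{equation}
\mu:\operatorname{JFC}(\gamma,r,\theta_1,h^1_t)\otimes\operatorname{JFC}(\gamma,r,\theta_2,h^2_t)\to\operatorname{JFC}(\gamma,r,\theta_1+\theta_2,h^3_t). \nonumber
\end{equation}
It counts rigid diagonal-avoiding pseudo-holomorphic triangles, i.e.\ discs with three strip-like ends, boundary on $\gamma\times\gamma$, and Hamiltonian perturbations interpolating between the three ends. I would take the perturbation one-forms so that the curvature term is nonnegative. Then an action-energy identity in the style of Lemma \ref{lemma:energy_of_strip} gives $\mathcal{A}(\tau_3)\le\mathcal{A}(\tau_1)+\mathcal{A}(\tau_2)$ up to an error that vanishes as $h^i_t\to0$. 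Consequently $\mu$ maps $\operatorname{JFC}^a\otimes\operatorname{JFC}^b$ into $\operatorname{JFC}^{a+b+\varepsilon}$.

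\textbf{Step 2: compactness.} The main obstacle is showing that $\mu$ is a chain map in the diagonal-avoiding theory, so that the 1-dimensional moduli spaces of triangles break only along non-constant trajectories. I would rerun the five-case analysis for the differential with triangles in place of strips. Touching the diagonal, disc bubbling and sphere bubbling are local arguments via $\pi_d$, the Hopf lemma and positivity of intersections, so they should transfer verbatim. Breaking at a constant trajectory needs the winding-number argument of \cite[Lemma 2.17, 2.22]{Greene-Lobb:Floer-homology}. This works because $\pi_d$ turns all the Hamiltonian flows near the diagonal into rotations, with total angle $\theta_1+\theta_2\le\pi$. I would also use the theta-width $\operatorname{Width}_r^{\theta_1+\theta_2}$ to make all data standard near $\Delta(\mathbb{C})$.

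\textbf{Step 3: the product on degree 2 classes.} I would show that $\mu(\alpha_2\otimes\alpha_2)=\alpha_2$, i.e.\ that the top class behaves as a unit. To do this I would pass to the Morse–Bott limit $\theta_1,\theta_2\to0$ using the continuation isomorphisms of the previous subsection. There the product becomes the intersection product on $H_*(\gamma\times\gamma,\Delta(\gamma))$, where the fundamental class $[\gamma\times\gamma,\Delta]$ multiplies to itself. Compatibility of $\mu$ with continuation maps, shown by the usual homotopy-of-data argument with the same compactness input, transports this identity to all $\theta_1,\theta_2$.

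\textbf{Step 4: conclude.} Choose cycles representing $\alpha_2$ with action levels within $\delta$ of $l_2(\gamma,r,\theta_1)$ and $l_2(\gamma,r,\theta_2)$. Their product represents $\alpha_2$ and lies in filtration level $l_2(\gamma,r,\theta_1)+l_2(\gamma,r,\theta_2)+2\delta+\varepsilon$. Letting $h^i_t\to0$ and $\delta\to0$ gives the inequality. The endpoint cases $\theta_i=0$ and $\theta_1+\theta_2=\pi$ follow from continuity of $l_2$ in $\theta$ (Proposition \ref{prop:bounded_spectral_variation}), using $l_2(\gamma,r,0)=0$.
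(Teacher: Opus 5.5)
Your proposal follows the paper's route. The paper also derives the inequality from the Leclercq--Zapolsky triangle inequality for the Zapolsky product. Its key input is likewise $\alpha_2\star\alpha_2=\alpha_2$, checked by transporting along continuation maps to the Morse--Bott limit $\theta\to 0$, where $\alpha_2$ is the relative fundamental class of $(\gamma\times\gamma,\Delta(\gamma))$ and acts as a unit. Your Steps 1, 2 and 4 spell out in the diagonal-avoiding setting what the paper takes by citation from \cite[Theorem 27]{Leclercq-Zapolsky:2015}, namely the filtered product and its compactness, and you also treat the endpoint angles by continuity.
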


\begin{proof}
    This follows immediately from the triangle inequality in \cite[Theorem 27]{Leclercq-Zapolsky:2015} once we establish that the product on Floer homology $\star:\operatorname{HF}_k(L; H_1, J_1) \otimes \operatorname{HF}_l(L;H_2,J_2) \rightarrow \operatorname{HF}_{k+l-n}(L;H,J)$ defined in \cite[Section 3.9.1]{Zapolsky:2015} satisfies $\alpha \star \alpha = \alpha$ in the Jordan Floer homology setting where $\alpha \in \operatorname{JF}_2(\gamma,r,\theta)$ is the generator of the second homology group. This product respects continuation maps. Therefore, it is sufficient to establish the result in the Morse-Bott homology limit as $\theta \rightarrow 0$. We recall that this limit is simply the relative homology of a torus $\gamma \times \gamma$ to its diagonal curve $\Delta(\gamma)$, for which $\alpha$ is the relative fundamental class and therefore acts as a unit, giving the required result.
\end{proof}

\section{Inscriptions in non-smooth Jordan curves}

We now wish to consider inscriptions of isosceles trapezoids in non-smooth Jordan curves. As first introduced in the discussion of the issue of shrinkout (Section \ref{sec:shrinkout}), the approach will be to approximate a non-smooth Jordan curve $\gamma$ by a sequence of smooth Jordan curves $\gamma_n$. Convergent sequences of inscribed isosceles trapezoids $T_n$ in the smooth Jordan curves $\gamma_n$ limit to some set of points $T \subset \gamma$. However, the limiting set $T$ could be a single point, that is, the inscriptions $T_n$ \emph{shrinkout}. The aim is therefore to obstruct shrinkout.

We will restrict ourselves to considering \emph{rectifiable} and \emph{locally monotone} Jordan curves. Recall that rectifiable Jordan curves are those of finite length, and locally monotone Jordan curves can be represented locally as the graph of a continuous function. Such Jordan curves are easier to deal with since they can be approximated by smooth Jordan curves with nice properties, making it easier to obstruct shrinkout.

\subsection{Approximating Curves} \label{sec:approximating_curves}

We introduced the standard approach to approximating non-smooth Jordan curves in Section \ref{sec:shrinkout}. This is the approach we will use when approximating rectifiable Jordan curves. Let $\gamma \subset \mathbb{C}$ be a Jordan curve and $D$ be the bounded region of $\mathbb{C} \setminus \gamma$. By the Riemann mapping theorem, there exists a biholomorphic map $f:\mathbb{D} \rightarrow D$, where $\mathbb{D} \subset \mathbb{C}$ is the open unit disc. By Carathéodory’s extension to the Riemann mapping theorem, this extends continuously to a homeomorphism $f: \overline{\mathbb{D}} \rightarrow \overline{D}$.

We can then define smooth Jordan curves $\gamma_t = f(C_t)$, where $C_t = \{ z \in \mathbb{C} \mid |z| = t \}$ is the circle centred at the origin of radius $t$. The set of curves $\{ \gamma_t \mid t \in (0,1) \}$ defines an isotopy of smooth Jordan curves whose limit is the Jordan curve $\gamma = \gamma_1$. These Jordan curves have a natural parametrisation given by $\gamma_t(\theta) = f(te^{i\theta})$. An important property of such approximating Jordan curves is given by the following classical theorem.

\begin{lemma}[Riesz-Privalov Theorem, \cite{Pommerenke:1992}] \label{lemma:rectifiable_approx}
    Let $\gamma$ be a rectifiable Jordan curve of length $L$ and let $\gamma_t$ be defined as above, then $\gamma_t$ are of uniformly bounded length.
\end{lemma}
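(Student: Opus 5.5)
The plan is to run the classical argument of the Riesz--Privalov theorem. It rests on two facts: the boundary values of a conformal map onto a rectifiable Jordan domain are governed by an $H^1$ function, and integral means of $H^1$ functions increase with the radius. Write $\gamma_t(\theta)=f(te^{i\theta})$ for $t\in(0,1)$. Since $\gamma_t$ is the image of a circle under a holomorphic map, its length is
\begin{equation}
\operatorname{Length}(\gamma_t)=\int_0^{2\pi} |f'(te^{i\theta})|\, t\, d\theta = t\, M_1(t,f'), \nonumber
\end{equation}
where $M_1(t,g)$ denotes the $L^1$ integral mean of $g$ on the circle $C_t$. Since $f'$ is holomorphic on $\mathbb{D}$, $|f'|$ is subharmonic, so $M_1(t,f')$ is nondecreasing in $t$. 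It therefore suffices to show that $\sup_{t<1} M_1(t,f') <\infty$, that is, $f'\in H^1$, and in fact that this supremum is at most $L$.

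The key step is to bound polygonal approximations of $\gamma_t$ by $L$. Fix $t<1$ and a partition $0=\theta_0<\dots<\theta_n=2\pi$. For each $j$ I will estimate $|f(te^{i\theta_{j+1}})-f(te^{i\theta_j})|$ in terms of boundary data. The map $g_j(z)=f(ze^{i\theta_{j+1}})-f(ze^{i\theta_j})$ is holomorphic on $\mathbb{D}$ and continuous on $\overline{\mathbb{D}}$ by Carathéodory's extension. Hence $|g_j(te^{i0})|$ is bounded by its Poisson integral:
\begin{equation}
|g_j(t)|\le \frac{1}{2\pi}\int_0^{2\pi} P_t(-\varphi)\,\bigl|f(e^{i(\varphi+\theta_{j+1})})-f(e^{i(\varphi+\theta_j)})\bigr|\,d\varphi. \nonumber
\end{equation}
Summing over $j$ and interchanging the sum with the integral, the integrand becomes, for each fixed $\varphi$, the length of an inscribed polygon of $\gamma$ with vertices $f(e^{i(\varphi+\theta_j)})$. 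These vertices appear in cyclic order along $\gamma$ because $f$ is a homeomorphism $\partial\mathbb{D}\to\gamma$, so the polygon length is at most $L$. Since $\frac{1}{2\pi}\int P_t=1$, every inscribed polygon of $\gamma_t$ therefore has length at most $L$. Taking the supremum over partitions gives $\operatorname{Length}(\gamma_t)\le L$ for all $t\in(0,1)$. This bound is uniform, and it also makes the $H^1$ statement above unnecessary.

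The main point to check is the subharmonicity step: $|g_j|$ is the modulus of a holomorphic function continuous up to the boundary, so the sub-mean-value property with the Poisson kernel applies. Rotation invariance of the construction is needed to match the Poisson integral at the point $te^{i\theta_j}$ with the shifted boundary arcs. The remaining issue is bookkeeping: the vertices must be rotated consistently by the same $\varphi$, so that they stay in cyclic order on $\gamma$, and this holds because all $\theta_j$ are shifted by the same $\varphi$. Everything else is routine, so I would present this Poisson-kernel argument directly rather than cite $H^1$ theory.
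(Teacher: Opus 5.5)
Your proof is correct. It differs from the paper because the paper gives no proof of this lemma. The paper simply cites the Riesz--Privalov theorem from Pommerenke, whose standard proof goes through $H^1$ theory: rectifiability of $\gamma$ is equivalent to $f'\in H^1$, and $\operatorname{Length}(\gamma_t)=t\,M_1(t,f')$ is then bounded by the $H^1$ norm of $f'$.

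Your argument is a discretised, elementary version of the classical Poisson-integral proof that a boundary function of bounded variation forces $f'\in H^1$. You write each chord $f(te^{i\theta_{j+1}})-f(te^{i\theta_j})$ as the value at $t$ of $g_j$. This function is holomorphic in $\mathbb{D}$ and continuous on $\overline{\mathbb{D}}$ by Carath\'eodory, so the Poisson representation plus the triangle inequality applies. Positivity of the kernel suffices here, so you do not need subharmonicity of $|g_j|$. The chord sum then becomes an average of rotated inscribed polygons of $\gamma$. Each of these has length at most $L$, because $f|_{\partial\mathbb{D}}$ is a homeomorphism onto $\gamma$ and so keeps the vertices in cyclic order.

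Your route buys a self-contained proof with the explicit sharp bound $\operatorname{Length}(\gamma_t)\le L$, whereas the paper only needs some uniform bound. The citation buys brevity and the rest of the Riesz--Privalov package, such as absolute continuity of the boundary correspondence and $f'\in H^1$, none of which the paper uses. As you note yourself, the opening remarks about $M_1(t,f')$ being nondecreasing are superfluous for the uniform bound; they only add that the lengths increase with $t$.
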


When approximating locally monotone Jordan curves (recall Definition \ref{def:locally_monotone}), we wish to preserve the local monotonicity constant, which we now define. Recall that for a locally monotone Jordan curve $\gamma$ parameterised by $\gamma: \mathbb{R}/2\pi\mathbb{Z} \rightarrow \mathbb{R}^2$, for each point $\theta \in \mathbb{R}$ there exists a unit vector $v_\theta \in \mathbb{R}^2$ and a neighbourhood $U_\theta = (\theta_1, \theta_2)$ of $\theta$ such that $g_{v_\theta}:s \mapsto \gamma(s) \cdot v_p$ is strictly monotone. There are many such choices for the pair $(v_\theta, U_\theta)$, but we let \smash{$(\widetilde{v}_\theta, \widetilde{U}_\theta)$} with \smash{$\widetilde{U}_\theta = (\widetilde{\theta}_1, \widetilde{\theta}_2)$} denote a pair that maximises $\mu(\theta, v_\theta, U_\theta) = \min\{|g_{v_\theta}(\theta)-g_{v_\theta}(\theta_1)|, |g_{v_\theta}(\theta)-g_{v_\theta}(\theta_2)| \}$. 
\begin{definition}
    Let $\gamma$ be a locally monotone Jordan curve, $\gamma:S^1 \cong \mathbb{R}/2\pi \mathbb{Z} \rightarrow \mathbb{C} \cong \mathbb{R}^2$ be a parametrisation of $\gamma$ and \smash{$(\widetilde{v}_\theta, \widetilde{U}_\theta)$} be defined as above. Denoting \smash{$\mu(\theta) \coloneq \mu(\theta, \widetilde{v}_\theta, \widetilde{U}_\theta)$}, we define the \emph{local monotonicity constant} of $\gamma$ as $\mu \coloneq \min\{\mu(\theta) \mid \theta \in \mathbb{R}\}$. 
\end{definition}
\noindent For the approximating Jordan curves defined by the previous construction, it is not clear that this should be the case. We therefore take a different approach when approximating locally monotone Jordan curves, the construction of which was used by Asano and Ike in \cite[Chapter 5.3]{Asano-Ike:2026} and we now describe.

Let $\gamma$ be a locally monotone Jordan curve and let $\gamma(\cdot):\mathbb{R}/\mathbb{Z} \rightarrow \mathbb{\mathbb{C}}$ be a parametrisation, then for every $\theta \in \mathbb{R}$ there exists a direction $v_p \in \mathbb{R}^2$ and a neighbourhood $U_\theta = (\theta_1, \theta_2) \subset \mathbb{R}$ such that $s \mapsto \gamma(s) \cdot v_\theta$ is monotone on $U_\theta$. We therefore define a parameterisation of $\gamma_\varepsilon$ by 
\begin{equation}
    \gamma_\varepsilon(s) = \int_\mathbb{R} \frac{1}{\varepsilon} \varphi\left(\frac{u}{\varepsilon}\right) \gamma(s-u) \; du, \nonumber
\end{equation}
where $\varphi:\mathbb{R} \rightarrow \mathbb{R}_{\geq 0}$ is a smooth function supported in $[-1,1]$ and $\int_\mathbb{R} \varphi = 1$. The curves $\gamma_\varepsilon$ are, therefore, smooth closed curves that converge pointwise to $\gamma$ as $\varepsilon \rightarrow 0$. 

To see that $\gamma_\varepsilon$ is a smooth Jordan curve, it remains to check that it is simple, that is, $\gamma(\cdot):S^1 \rightarrow \mathbb{C}$ is injective. We first introduce $U_\theta^\varepsilon \subset U_\theta$ defined by $U_\theta^\varepsilon = (\theta_1+\varepsilon, \theta_2 - \varepsilon)$ and then note that $s \mapsto \gamma_\varepsilon(s) \cdot v_\theta$ is locally monotone for all $\theta \in \mathbb{R}$ (see Lemma \ref{lemma:locally_monotone_approx} below). Therefore, $\gamma(U_\theta^\varepsilon)$ is a simple curve for every $\theta \in \mathbb{R}$ and if $\gamma_\varepsilon(s_1) = \gamma_\varepsilon(s_2)$ there must be no $\theta$ such that $s_1, s_2 \in U_\theta^\varepsilon$. By compactness, the lengths of $U_\theta$ are bounded below by some $l > 0$, so we must have $|s_1-s_2| > l-\varepsilon$. On the other hand, again by compactness, we have $d = \min \{|\gamma(s_1)-\gamma(s_2)| : |s_1-s_2| \geq l/2 \} > 0$. Therefore, as $\gamma_\varepsilon \rightarrow \gamma$ pointwise, for sufficiently small $\varepsilon$ we see that $\gamma_\varepsilon$ is simple and hence a smooth Jordan curve.

\begin{lemma} \label{lemma:locally_monotone_approx}
    Let $\gamma$ be a locally monotone Jordan curve of local monotonicity constant $\mu$ and let $\gamma_\varepsilon$ be defined as above, then letting $\mu_t$ be the local monotonicity constant of $\gamma_t(\theta)$, we have $\liminf_{\varepsilon \rightarrow 0} \mu_t \geq \mu$.
\end{lemma}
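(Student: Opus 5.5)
The key observation is that the mollification $\gamma_\varepsilon = \varphi_\varepsilon * \gamma$ commutes with the linear functional $x \mapsto x \cdot v$. For a fixed unit vector $v$ we therefore have
\begin{equation}
    \gamma_\varepsilon(s)\cdot v = \int_{\mathbb{R}} \varphi_\varepsilon(u)\, \bigl(\gamma(s-u)\cdot v\bigr)\, du, \nonumber
\end{equation}
an average of translates of $g_v(s) = \gamma(s)\cdot v$ against a nonnegative weight of total mass $1$ supported in $[-\varepsilon,\varepsilon]$. If $g_v$ is strictly increasing on $(\theta_1,\theta_2)$, then for $s<s'$ in $(\theta_1+\varepsilon,\theta_2-\varepsilon)$ every translate satisfies $g_v(s-u) < g_v(s'-u)$ for $|u|\le\varepsilon$. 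Integrating against $\varphi_\varepsilon$ gives strict monotonicity of $g^\varepsilon_v(s) := \gamma_\varepsilon(s)\cdot v$ on $U^\varepsilon_\theta = (\theta_1+\varepsilon,\theta_2-\varepsilon)$. This first step also gives the local monotonicity used in the simplicity argument above.

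Fix $\theta$ and take the optimal pair $(\widetilde v_\theta,\widetilde U_\theta)$ with $\widetilde U_\theta=(\widetilde\theta_1,\widetilde\theta_2)$, so $\mu(\theta)\ge\mu$. I would use the same vector $v=\widetilde v_\theta$ for $\gamma_\varepsilon$ at $\theta$ and the interval $U^\varepsilon_\theta$, which is admissible by the first step. This yields
\begin{equation}
    \mu_\varepsilon(\theta) \ge \min\bigl\{|g^\varepsilon_v(\theta)-g^\varepsilon_v(\widetilde\theta_1+\varepsilon)|,\ |g^\varepsilon_v(\theta)-g^\varepsilon_v(\widetilde\theta_2-\varepsilon)|\bigr\}. \nonumber
\end{equation}
Since $\gamma$ is uniformly continuous on the compact circle, $g^\varepsilon_v\to g_v$ uniformly, with an error $\omega(\varepsilon)$ independent of $v$ and $\theta$, where $\omega$ is the modulus of continuity of $\gamma$. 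The endpoint shifts by $\varepsilon$ also cost at most $\omega(\varepsilon)$. Hence $\mu_\varepsilon(\theta)\ge\mu(\theta)-4\omega(\varepsilon)\ge\mu-4\omega(\varepsilon)$ uniformly in $\theta$. Taking the minimum over $\theta$ and then $\liminf_{\varepsilon\to0}$ gives $\liminf\mu_\varepsilon\ge\mu$.

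The main obstacle is that $\mu(\theta)$ is defined as a supremum over pairs, and a maximiser need not exist or need not lie inside the circle of length $1$. To handle this I would work with near-optimal pairs, obtaining $\mu(\theta)-\delta$ for arbitrary $\delta>0$. I would also first shrink the interval $\widetilde U_\theta$ slightly, to ensure $U^\varepsilon_\theta$ is nonempty and contains $\theta$. This needs $\widetilde\theta_2-\widetilde\theta_1>2\varepsilon$, which holds for small $\varepsilon$ because $|g_v(\theta)-g_v(\widetilde\theta_i)|\ge\mu>0$ forces the interval to have length at least a fixed amount, by uniform continuity of $\gamma$. The uniformity of the estimate in $\theta$ comes purely from the uniform continuity of $\gamma$, so no compactness argument over $\theta$ is needed beyond this.
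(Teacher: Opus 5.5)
Your proposal is correct and follows the paper's argument. Both proofs fix the same vector $\widetilde v_\theta$ and use that mollification preserves strict monotonicity of $s\mapsto\gamma_\varepsilon(s)\cdot v$ on the shrunken interval $U_\theta^\varepsilon$; the paper stops at that observation, whereas you also supply the step it leaves implicit, namely the uniform bound $\mu_\varepsilon(\theta)\ge\mu-4\omega(\varepsilon)$ from the modulus of continuity, the use of near-optimal pairs, and the uniform guarantee that $\theta\in U_\theta^\varepsilon$ for small $\varepsilon$.
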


\begin{proof}
    Let $v_\theta$ be a unit vector and $U_\theta$ be a neighbourhood of $\theta$ such that $g_{v_\theta}(s) = \gamma(s) \cdot v_\theta$ is strictly monotone. It is sufficient to show that the map $g_{v_\theta}^\varepsilon(s) = \gamma_\varepsilon(s) \cdot v_\theta$ is strictly monotone on $U_\theta^\varepsilon$, which is immediate since $g_{v_\theta}^\varepsilon$ is simply a mollification of $g$.

\end{proof}

We recall that for locally $K$-Lipschitz-graphical curves (see Definition \ref{def:graphically_Lipschitz}) for every $\theta \in \mathbb{R}$ there exists an open neighbourhood $U_\theta \subset \mathbb{R}$ of $\theta$ and a unit vector $v_\theta \in \mathbb{R}^2$ such that $f_{v_\theta} \circ g_{v_\theta}^{-1}$ is $K$-Lipschitz on $g_{v_\theta}(U_\theta)$, where $f_{v_\theta}:s \mapsto \gamma(s) \cdot n_\theta$ where $n_\theta \in \mathbb{R}^2$ is a unit normal to $v_\theta$. However, we note that we do not necessarily have $(g(\theta)-\mu, g(\theta)+\mu) \subseteq g_{v_\theta}(U_\theta)$ as the definition of $\mu$ only requires $f_{v_\theta} \circ g_{v_\theta}^{-1}$ to be a continuous function and not Lipschitz. Therefore, we must introduce the analogue of the local monotonicity constant for locally Lipschitz-graphical curves. 

As before, we note that there are many choices of the pair $(v_\theta, U_\theta)$ such that $f_{v_\theta} \circ g_{v_\theta}^{-1}$ is $K$-Lipschitz on $g_{v_\theta}(U_\theta)$. We let \smash{$(\widetilde{v}_\theta^K, \widetilde{U}_\theta^K)$} denote a pair that maximises $\mu(\theta, v_\theta, U_\theta)$. We can then define the local $K$-Lipschitz-graphical constant as follows.

\begin{definition}
    Let $\gamma$ be a locally Lipschitz-graphical Jordan curve, $\gamma:S^1 \cong \mathbb{R}/2\pi \mathbb{Z} \rightarrow \mathbb{C} \cong \mathbb{R}^2$ be a parametrisation of $\gamma$ and \smash{$(\widetilde{v}_\theta^K, \widetilde{U}_\theta^K)$} be defined as above. Denoting \smash{$\mu_K(\theta) \coloneq \mu(\theta, \widetilde{v}_\theta^K, \widetilde{U}_\theta^K)$}, we define the \emph{local $K$-Lipschitz-graphical constant} of $\gamma$ as $\mu_K \coloneq \min\{\mu_K(\theta) \mid \theta \in \mathbb{R}\}$. 
\end{definition}

We now wish to show that the local $K$-Lipschitz graphical constants of the approximating Jordan curve satisfy the same property as the local monotonicity constants.

\begin{lemma} \label{lemma:lipschitz_approx}
    Let $\gamma$ be a locally $K$-Lipschitz-graphical Jordan curve of local $K$-Lipschitz-graphical constant $\mu_K$, then for sufficiently small $\varepsilon$ we have $\gamma_\varepsilon$ is a locally $K$-Lipschitz-graphical Jordan curve. Moreover, letting $\mu_{K,t}$ be the local $K$-Lipschitz-graphical constant of $\gamma_t$, we have $\liminf_{\varepsilon \rightarrow 0} \mu_{K,t} \geq \mu_K$. 
\end{lemma}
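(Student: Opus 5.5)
The plan is to mirror the proof of Lemma \ref{lemma:locally_monotone_approx}: fix $\theta$, take the maximising pair $(\widetilde{v}_\theta^K, \widetilde{U}_\theta^K)$, and show that the same direction $v = \widetilde{v}_\theta^K$ and the shrunken neighbourhood $U^\varepsilon = (\theta_1+\varepsilon, \theta_2-\varepsilon)$ witness the $K$-Lipschitz-graphical property for $\gamma_\varepsilon$. Write $g(s) = \gamma(s)\cdot v$ and $f(s) = \gamma(s)\cdot n$, with $n$ the unit normal to $v$. Let $g_\varepsilon, f_\varepsilon$ be the corresponding components of $\gamma_\varepsilon$; these are the mollifications $g_\varepsilon = \varphi_\varepsilon * g$ and $f_\varepsilon = \varphi_\varepsilon * f$, because the mollification acts linearly on the coordinates of $\gamma$. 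By Lemma \ref{lemma:locally_monotone_approx}, $g_\varepsilon$ is strictly monotone on $U^\varepsilon$, so $f_\varepsilon\circ g_\varepsilon^{-1}$ is a well-defined function on $g_\varepsilon(U^\varepsilon)$. Smoothness of $\gamma_\varepsilon$ and its simplicity for small $\varepsilon$ were already established before the lemma.

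The key step is the Lipschitz estimate. Take $s_1 < s_2$ in $U^\varepsilon$, and say $g$ is increasing there. Then
\begin{equation}
    f_\varepsilon(s_2) - f_\varepsilon(s_1) = \int_\mathbb{R} \varphi_\varepsilon(u)\bigl(f(s_2-u)-f(s_1-u)\bigr)\,du. \nonumber
\end{equation}
Both $s_i - u$ lie in $\widetilde{U}_\theta^K$ for $|u|\le\varepsilon$, so the $K$-Lipschitz property of $f\circ g^{-1}$ gives $|f(s_2-u)-f(s_1-u)| \le K\bigl(g(s_2-u)-g(s_1-u)\bigr)$. The right-hand side is nonnegative by monotonicity, which is the point that makes the estimate work: absolute values pass through the integral without loss. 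Integrating against $\varphi_\varepsilon\ge 0$ then yields $|f_\varepsilon(s_2)-f_\varepsilon(s_1)| \le K\bigl(g_\varepsilon(s_2)-g_\varepsilon(s_1)\bigr)$. This is exactly the statement that $f_\varepsilon\circ g_\varepsilon^{-1}$ is $K$-Lipschitz on $g_\varepsilon(U^\varepsilon)$. Since $\theta$ was arbitrary, $\gamma_\varepsilon$ is locally $K$-Lipschitz-graphical. One also needs every parameter to lie in some $U^\varepsilon_\theta$; the uniform lower bound $l$ on the lengths of the $U_\theta$, used earlier in the paper, gives this once $\varepsilon < l/4$.

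For the constant, note that $\mu_{K}^\varepsilon(\theta) \ge \mu(\theta, v, U^\varepsilon)$ for the Lipschitz-graphical constant of $\gamma_\varepsilon$. The endpoints of $U^\varepsilon$ converge to those of $\widetilde{U}_\theta^K$, and $g_\varepsilon\to g$ uniformly because $\gamma$ is uniformly continuous on the circle. Together these give $\liminf_{\varepsilon\to0}\mu(\theta,v,U^\varepsilon) \ge \mu_K(\theta) \ge \mu_K$.

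The main obstacle is making this convergence uniform in $\theta$, so that the minimum over $\theta$ is controlled. I would handle it with a modulus-of-continuity bound: $|g_\varepsilon(\theta_1+\varepsilon) - g(\theta_1)| \le \omega_\gamma(2\varepsilon)$, where $\omega_\gamma$ is the uniform modulus of continuity of $\gamma$, and similarly at $\theta$ itself. Hence $\mu(\theta,v,U^\varepsilon) \ge \mu_K(\theta) - 2\omega_\gamma(2\varepsilon)$, with an error that does not depend on $\theta$. Taking the minimum over $\theta$ and letting $\varepsilon\to0$ gives $\liminf_{\varepsilon\to0}\mu_{K,\varepsilon}\ge\mu_K$.
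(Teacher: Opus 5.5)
Your proposal is correct and is essentially the paper's argument: the same direction, the shrunken interval $U^\varepsilon_\theta$, and the mollified Lipschitz estimate in which monotonicity of $g$ lets the absolute value pass through the integral. Your bound $\mu(\theta,v,U^\varepsilon)\ge\mu_K(\theta)-2\omega_\gamma(2\varepsilon)$ also supplies the $\liminf$ statement, which the paper's proof leaves implicit. The one slip is the reason given for $\theta\in U^\varepsilon_\theta$. It does not follow from the length bound $l$, but from $|g(\theta)-g(\theta_i)|\ge\mu_K>0$ together with uniform continuity of $\gamma$, which keep $\theta$ a uniform parameter distance from the endpoints of its maximising interval.
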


\begin{proof}
    Let $v_\theta$ be a unit vector and $U_\theta$ be a neighbourhood of $\theta$ such that $f_{v_\theta} \circ g_{v_\theta}^{-1}$ is $K$-Lipschitz on $g_{v_\theta}(U_\theta)$. Then for any $\theta_1, \theta_2 \in U_\theta$ we have $| \gamma(\theta_1)\cdot n_\theta-\gamma(\theta_2) \cdot n_\theta | \leq L |\gamma(\theta_1) \cdot v_\theta - \gamma(\theta_2) \cdot v_\theta|$. By the proof of Lemma \ref{lemma:locally_monotone_approx}, the function $s \mapsto \gamma_\varepsilon(s) \cdot v_\theta$ is strictly monotone on $U_\theta^\varepsilon$, so it is sufficient to show that the same inequality holds for $\gamma_\varepsilon$ for any $\theta_1, \theta_2 \in U_\theta^\varepsilon$. We have
    \begin{eqnarray}
        | \gamma_\varepsilon(\theta_1) \cdot n_\theta - \gamma_\varepsilon(\theta_2) \cdot n_\theta| &\leq& \int_\mathbb{R} \frac{1}{\varepsilon} \varphi\left(\frac{u}{\varepsilon}\right) \cdot \Big|\big( \gamma(\theta_1-u) - \gamma(\theta_2-u) \big) \cdot n_\theta \Big| \; du \nonumber \\
        &\leq& K \int_\mathbb{R} \frac{1}{\varepsilon} \varphi\left(\frac{u}{\varepsilon}\right) \cdot \Big|\bigl( \gamma(\theta_1-u) - \gamma(\theta_2-u) \bigr) \cdot v_\theta \Big| \; du \nonumber \\
        &=& K \cdot \big|\gamma_\varepsilon(\theta_1) \cdot v_\theta - \gamma_\varepsilon(\theta_2) \cdot v_\theta \big| \nonumber,
    \end{eqnarray}
    where for the first inequality, we used the definition of $\gamma_\varepsilon$, for the second inequality we used the Lipschitz assumption above, and for the final equality we used the fact that $g_{v_\theta}$ is monotone on $U_\theta$. 
\end{proof}

\subsection{Shrinkout obstruction criterion}

For rectifiable and locally monotone Jordan curves, the limiting action of any sequence of inscribed isosceles trapezoids $T_n \subset \gamma_n$ that shrinks to a point $p \in \gamma$ can only take certain values. To prove this claim, we follow the same approach as \cite[Lemma 5.1]{Greene-Lobb:Floer-homology}, generalised to include the possibility of locally monotone Jordan curves, and the resulting lemma provides a way to obstruct shrinkout.

\begin{lemma} \label{lemma:shrinkout}
    Let $\gamma$ be a rectifiable or locally monotone Jordan curve and let $\gamma_n$ be defined as $\gamma_t$ for $t = 1-\tfrac{1}{n}$ from Section \ref{sec:approximating_curves}. Let $T_n \subset \gamma_n$ be a sequence of non-degenerate isosceles trapezoids of fixed aspect ratio $r$ and angle $\theta$ limiting to a point $p \in \gamma$, then working in $\mathbb{R} / \operatorname{Area}(\gamma) \mathbb{Z}$ we have
    \begin{equation}
        \lim_{n \rightarrow \infty} \mathcal{A}_{r, \theta}(T_n)  \in \{0, r \cdot \operatorname{Area}(\gamma), (1-r) \cdot \operatorname{Area}(\gamma) \}. \nonumber
    \end{equation}
\end{lemma}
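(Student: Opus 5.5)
Because $T_n$ shrinks to $p$ and diagonal lengths tend to $0$, the Hamiltonian term $\tfrac12\theta r(1-r)|z_n-w_n|^2$ in $\mathcal{A}_{r,\theta}(T_n)$ tends to $0$. The plan is therefore to compute the limiting symplectic area of a preferred capping, $(1-r)\int(\pi_1\circ\widehat{\tau}_n)^*dx\wedge dy + r\int(\pi_2\circ\widehat{\tau}_n)^*dx\wedge dy$. We will do this by building an explicit capping from arcs of $\gamma_n$, as in Sections \ref{sec:elegant_action} and \ref{sec:almost_elegant_action}. Following \cite[Lemma 5.1]{Greene-Lobb:Floer-homology}, we work modulo $\operatorname{Area}(\gamma)$. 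Changing the boundary path $p=(p_1,p_2)$ in $\gamma_n\times\gamma_n$ by a full loop of one coordinate around $\gamma_n$ changes the area by $(1-r)\operatorname{Area}(\gamma_n)$ or $r\operatorname{Area}(\gamma_n)$. Combined with the winding-number constraint around $\Delta(\mathbb{C})$, this is what produces a finite list of possible values. We also use $\operatorname{Area}(\gamma_n)\to\operatorname{Area}(\gamma)$.

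Step 1 is to localise. Choose a small disc $D(p,\delta)$. For $n$ large, all four vertices of $T_n$ lie in a single short arc $\sigma_n\subset\gamma_n\cap D(p,\delta)$. In the rectifiable case, $\sigma_n=\gamma_n(I_n)$ for parameter intervals $I_n$ in the Riemann-map parametrisation. Using equicontinuity of $f$ on $\overline{\mathbb{D}}$ and the Riesz--Privalov bound (Lemma \ref{lemma:rectifiable_approx}), the length of $\sigma_n$ tends to $0$, so the enclosed signed areas between chords and $\sigma_n$ are $O(\operatorname{length}(\sigma_n)^2)\to0$. In the locally monotone case, we use Lemma \ref{lemma:locally_monotone_approx}. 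The uniform monotonicity constant makes $\sigma_n$ a graph over a fixed direction $v$ on a window of width $\mu$, so for large $n$ the four vertices lie on a graph arc of small diameter. Signed areas bounded by sub-arcs of this graph arc and chords are bounded by (diameter)$^2$ up to the graph's oscillation over the arc, and both tend to $0$.

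Step 2 is to enumerate capping boundaries. The cyclic order of the four vertices along $\gamma_n$ is one of finitely many. For each order, choose $p_1$ from $z$ to $z'$ and $p_2$ from $w$ to $w'$ along $\gamma_n$, each either staying inside $\sigma_n$ or going the long way around $\gamma_n$. Then verify, via the projection $\pi_d$, that some combination has winding number $0$ around the diagonal, so that it bounds a preferred capping. The long-way choices are allowed because inscriptions near a single point do not force $p$ to hit $\Delta$, after a small reparametrisation. The area of $\tau_1\cup p_1$ is then $o(1)$ or $\pm\operatorname{Area}(\gamma_n)+o(1)$, and likewise for $\tau_2\cup p_2$. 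Weighting these gives limits in $\{0,\pm r,\pm(1-r),\pm1\}\cdot\operatorname{Area}(\gamma)$. Reducing modulo $\operatorname{Area}(\gamma)$, and using $-r\equiv 1-r$, yields exactly $\{0,r\operatorname{Area}(\gamma),(1-r)\operatorname{Area}(\gamma)\}$.

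The main obstacle is Step 1 in the rectifiable case. There I must show rigorously that the short arc $\sigma_n$ both has vanishing length and contains all vertices, rather than $T_n$ using points on distinct strands of $\gamma_n$ near $p$. I would first try to exclude the latter using uniform convergence $\gamma_n\to\gamma$ together with injectivity of $f$ on $\partial\mathbb{D}$. Preimages of nearby points of $\gamma_n$ must then be nearby on the circle, and Riesz--Privalov, via $L^1$ convergence of $f'$ on circles, controls the length of $\sigma_n$.
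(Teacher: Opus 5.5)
\textbf{Gap in Step 2.} Your Step 1 and the final reduction match the paper's proof: three of the four complementary arcs of $\gamma_n\setminus T_n$ shrink to $p$, short arcs contribute $o(1)$ area, long arcs contribute $\pm\operatorname{Area}(\gamma_n)+o(1)$, and one reduces modulo the area. Your Carath\'eodory-injectivity argument for why all four vertices lie on one short arc fills in what the paper compresses into ``since $\gamma$ is simple''.

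The problem is Step 2. There you require a short/long choice of $p=(p_1,p_2)$ such that $\tau_n\cup p$ has winding number $0$ about $\Delta(\mathbb{C})$, so that it bounds a preferred capping. You assert this can be verified via $\pi_d$, but in general (for rectifiable curves) it is false. Let $h$ be a diffeomorphism that twists a small disc containing $z',w'$ but not $z,w$ through $k$ full turns. Then $h(\gamma_n)$ still inscribes the same trapezoid, with the vertices in the same order along the curve. However, a homotopy argument in the configuration space of two distinct points shows that the total change of $\arg\pi_d$ along every short/long path shifts by $2\pi k$. So the preferred class can force one coordinate of $p$ to wrap around $\gamma_n$ several times, and then no short/long combination has winding number $0$. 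Rectifiable curves can contain scaled copies of such twisted arcs accumulating at $p$, so nothing in the hypotheses excludes this. For the same reason, $\{0,\pm r,\pm(1-r),\pm1\}\cdot\operatorname{Area}(\gamma)$ is not the list of values of the actual preferred cappings.

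\textbf{The fix.} The missing idea is the one the paper uses: drop the winding-number-$0$ requirement and only ask that $p$ avoid $\Delta(\gamma_n)$.
\begin{enumerate}
  \item The primitive $\eta=(1-r)x_1\,dy_1+r\,x_2\,dy_2$ is closed on the Lagrangian $\gamma_n\times\gamma_n$.
  \item Any loop in the annulus $(\gamma_n\times\gamma_n)\setminus\Delta(\gamma_n)$ is homologous to a multiple of $t\mapsto(\gamma_n(t),\gamma_n(t+c))$, on which $\eta$ integrates to $\pm\operatorname{Area}(\gamma_n)$.
  \item Hence every diagonal-avoiding $p$ computes the preferred-capping area modulo $\operatorname{Area}(\gamma_n)$.
\end{enumerate}
Your own remark that single-coordinate loops change the area by $(1-r)\operatorname{Area}(\gamma_n)$ or $r\operatorname{Area}(\gamma_n)$ is exactly why diagonal avoidance, not winding number $0$, is the constraint that matters.

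A diagonal-avoiding short/long combination always exists. Lift to the convex strip $\{0<x-y<2\pi\}$ (or its mirror). If the relative order of the pairs $(z,w)$ and $(z',w')$ is preserved, take both paths short. If it flips, then one of $p_1,p_2$ has its long way in the direction that lands the endpoint lift in the same strip. Then the straight segment in the strip gives the path.

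Diagonal avoidance also rules out both coordinates going the long way in opposite directions, since the difference of lifts would then move by nearly $4\pi$. That is what excludes the value $\pm(1-2r)\operatorname{Area}$ from your list. With this change your argument goes through.

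\textbf{A simplification.} You do not need $\operatorname{length}(\sigma_n)\to0$. That step uses $L^1$ convergence of $f'$ on circles, which is more than the stated Riesz--Privalov lemma provides. A closed curve of length $\ell$ inside $B(p,\rho_n)$ bounds signed area at most $\rho_n\ell/2$, so uniformly bounded length suffices, as in the paper.
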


\begin{proof}
    Each $\gamma_n \setminus T_n$ consists of four components $\gamma_n^1$, $\gamma_n^2$, $\gamma_n^3$ and $\gamma_n^4$. Since $\gamma$ is simple, we may assume (without loss of generality) that $\gamma_n^2, \gamma_n^3, \gamma_n^4 \rightarrow p$ and $\gamma_n^1 \rightarrow \gamma$ in the Hausdorff metric. We may then choose discs $B_n = B(p, r_n)$ centred at $p$ of radius $r_n$ such that $\gamma_n \setminus \gamma_n^1 \subset B_n$ and $r_n \rightarrow 0$. We then consider the action
    \begin{equation}
        \mathcal{A}_{r, \theta}(T_n) \coloneqq \mathcal{A}_{H_t}(\tau_n) = \int_0^1 H_t \circ \tau_n(t) \; dt - \int_{[0,1]^2} \widehat{\tau}_n^* \, \omega, \nonumber
    \end{equation}
    where $\tau_n$ is the trajectory of $H_t$ corresponding to the inscribed isosceles trapezoid $T_n$.
    
    To compute the first term of this action, we recall that $H_t = \beta(t)H$ with $\int \beta(t) \,dt = 1$, and $H$ is constant along trajectories of $H_t$. The first integral then satisfies
    \begin{equation}
        \int_0^1 H_t \circ \tau_n(t) \; dt =  \frac{1}{2} \theta r(1-r) \cdot  \operatorname{Diag(T_n)} \rightarrow 0 \nonumber
    \end{equation}
    as $n \rightarrow \infty$ and can therefore be ignored.

    To compute the second term of this action, we would typically require a preferred capping $\widehat{\tau}_n$ of $\tau_n$, or, by Stokes' theorem, simply its boundary $\tau_n \cup p_n$ for some path $p_n \subset (\gamma_n \times \gamma_n) \setminus \Delta(\gamma_n)$ between $\tau_n(0)$ and $\tau_n(1)$ such that $\tau_n \cup p_n$ has zero winding number around $\Delta(\mathbb{C})$. However, since we only require the action in $\mathbb{R} / {\operatorname{Area}(\gamma)\mathbb{Z}}$, we can drop the zero winding number requirement. To see that this is sufficient, note that if we considered a path $\tau_n \cup p_n'$ with winding number $k \neq 0$ around $\Delta(\mathbb{C})$, one could obtain a path $p$ with zero winding number around $\Delta(\mathbb{C})$ by concatenating $p_n'$ with a loop $w_n \subset (\gamma_n \times \gamma_n) \setminus \Delta(\gamma_n)$ of winding number $-k$ around $\Delta(\mathbb{C})$. Note that $w_n$ bounds a disc of area $-k \cdot \operatorname{Area}(\gamma_n)$. Letting $p_n = p_n' \# w_n$ be the result of this concatenation, then in $\mathbb{R} / \operatorname{Area}(\gamma)\mathbb{Z}$, we have
    \begin{equation}
        \int_{[0,1]^2} \widehat{\tau}_n^* \, \omega = \int_{\tau_n \cup p} \eta = \int_{\tau_n \cup p'} \eta, \nonumber
    \end{equation}
    where $\eta = (1-r) \cdot x_1 dy_1 + r \cdot x_2 dy_2$ so that $d \eta = \omega$. To calculate the action in $\mathbb{R} / \operatorname{Area}(\gamma)\mathbb{Z}$, it is therefore sufficient to choose any path $p_n' \subset (\gamma_n \times \gamma_n) \setminus \Delta(\gamma_n)$ between $\tau_n(0)$ and $\tau_n(1)$.
    
    Let $\tau_n(0)=(z,w)$ and $\tau_n(1)=(z',w')$, then we define injective paths $p_n^1, p_n^2:[0,1] \rightarrow \gamma_n$ with $p_n^1(0) = z$, $p_n^1(1) = z'$ and $p_n^2(0) = w$, $p_n^2(1) = w'$. We further require that $p_n^1$ and $p_n^2$ both run clockwise around $\gamma_n$, then by careful parametrisation we can ensure $p_n^1(t) \neq p_n^2(t)$ for all $t \in [0,1]$. The path $p_n' = (p_n^1,p_n^2)$ is then a diagonal avoiding path as required. We can then calculate
    \begin{equation}
        \int_{\tau_n \cup p_n'} \eta = (1-r) \int_{\pi_1(\tau_n) \cup p_n^1} x \, dy + r \int_{\pi_2(\tau_n) \cup p_n^2} x \, dy, \nonumber
    \end{equation}
    which is the weighted sum of the signed areas bounded by $\pi_1(\tau_n) \cup p_n^1$ and $\pi_2(\tau_n) \cup p_n^2$ respectively. Denote these unweighted areas by $A_n^1$ and $A_n^2$ respectively.
    
    Note that the curves $p_n^j$ consist of unions of the closures of the curves $\gamma_n^i$ and $\pi_j(\tau_n) \subset B_n$. If $\gamma_n^1 \cap p_n^j = \emptyset$, then $\pi_j(\tau_n) \cup p_n^j \subset B_n$. On the other hand, if $\gamma_n^1 \subset p_n^j$, we have $\pi_j(\tau_n) \cup (\gamma \setminus p_n^j) \subset B_n$ and bounds an area of $A_n^j + \operatorname{Area}(\gamma_n)$. If $\gamma$ is rectifiable, these are closed curves of uniformly bounded length by Lemma \ref{lemma:rectifiable_approx}, and if $\gamma$ is locally monotone, these are the union of the graph of a function and an arc of a circle by Lemma \ref{lemma:locally_monotone_approx}. In either case, since they are contained within shrinking disks $B_n$, the area they bound becomes arbitrarily small as $n \rightarrow \infty$, which means that we have $A_n^j \rightarrow 0$ or $A_n^j + \operatorname{Area}(\gamma_n) \rightarrow 0$. We therefore have
    \begin{equation}
        \int_{\tau_n \cup p_n'} \eta = (1-r) \cdot A_n^1 + r \cdot A_n^2 \longrightarrow - \mathbf{1}_1 \cdot (1-r)\operatorname{Area}(\gamma) - \mathbf{1}_2 \cdot r\operatorname{Area}(\gamma) \nonumber
    \end{equation}
    as $n \rightarrow \infty$, where $\mathbf{1}_j \in \{0,1\}$ is the indicator function taking the value 1 if $\gamma_n^1 \subset p_n^j$ for all sufficiently large $n$ and 0 otherwise. This gives the desired result.
\end{proof}

To obstruct shrinkout, it therefore suffices to show that the limit of the action $A_{r, \theta}(T_n)$ for a sequence of inscribed isosceles trapezoids $T_n \subset \gamma_n$ does not take one of the values $0$, $r \cdot \operatorname{Area(\gamma)}$, or $(1-r) \cdot \operatorname{Area}(\gamma)$ in $\mathbb{R}/\operatorname{Area}(\gamma)\mathbb{Z}$. What makes this more challenging in the isosceles trapezoid case is that for a fixed smooth Jordan curve $\gamma_n$, the spectral invariant $l_i(\gamma_n, r, \theta)$ could potentially remain arbitrarily small as $\theta \rightarrow \pi$ as it could represent the action of a quadrisecant of arbitrarily small action.

\subsection{Proof of the main results}

We now have all the machinery in place to prove the main results. These results are derived from the following theorem, which uses the same approach as the proof of \cite[Theorem A]{Greene-Lobb:Floer-homology}. However, some modifications are required due to the possibility of quadrisecants with small actions representing the spectral invariant in the $\theta \rightarrow \pi$ limit. We begin by restating and proving Theorem \ref{thm:intro_main_theorem}.

\theoremB* 

\begin{proof}
    Fix an aspect ratio $r$ and approximate $\gamma$ by a sequence of smooth Jordan curves $\gamma_n$, where $\gamma_n$ is $\gamma_\varepsilon$ for $\varepsilon=1/n$ from Section \ref{sec:approximating_curves} if $\gamma$ is locally monotone and $\gamma_t$ for $t=1-1/n$ from Section \ref{sec:approximating_curves} otherwise. If $\gamma$ is locally monotone of constant $\mu$, by Lemma \ref{lemma:locally_monotone_approx} each curve $\gamma_n(\theta)$ is also locally monotone of constant $\mu_n$ with $\liminf_{n \rightarrow \infty} \mu_n \geq \mu$. Otherwise, by Lemma \ref{lemma:rectifiable_approx} the curves $\gamma_n$ are uniformly bounded in length. We then rescale each $\gamma_n$ to ensure that Area($\gamma_n$) = Area($\gamma$). The rescaled curves still have a uniformly bounded length $L$ or are locally $\mu_n$-monotone, and converge in $C^0$ to $\gamma$.

    We then consider the spectral invariant of each of the smooth approximating curves 
    \begin{equation}
        l_2(\gamma_n, r, \cdot):[0,\pi] \longrightarrow \mathbb{R}_{\geq 0}. \nonumber
    \end{equation}
    By Lemma \ref{lemma:spectral_triangle_inequality}, we have $l_2(\gamma_n,r,\pi/2^k) \geq l_2(\gamma_n,r,\pi)/2^k$ for all $n,k \in \mathbb{N}$. Therefore, letting $\delta_k = \pi/2^k$ and $\varepsilon_{n,k} = l_2(\gamma_n,r, \pi)/2^k$, for all $\theta \geq \delta_k$ we have $l_2(\gamma_n,r,\theta) \geq \varepsilon_{n,k}$. On the other hand, by Proposition \ref{prop:bounded_spectral_variation}, we have for each spectral invariant
    \begin{equation}
        0 < l_2(\gamma_n,r,\theta_2) - l_2(\gamma_n,r,\theta_1) \leq 2r(1-r) \operatorname{Rad}(\gamma_n)^2 \cdot (\theta_2 - \theta_1) \nonumber
    \end{equation}
    for all $0 \leq \theta_1 < \theta_2 \leq \pi$. By taking $\theta_1 = 0$ and $\theta_2 = \theta$  we immediately obtain that for all
    \begin{equation}
        \theta \leq \frac{r \cdot \operatorname{Area}(\gamma) - \varepsilon_{n,k}}{2r(1-r) \cdot \operatorname{Rad}(\gamma_n)^2} \nonumber
    \end{equation}
    we have $l_2(\gamma_n,r,\theta) \leq r \cdot \operatorname{Area}(\gamma) - \varepsilon_{n,k}$. The spectral invariants $l_2(\gamma_n,r,\theta)$ are therefore bounded  by $\varepsilon_{n,k} \leq l_2(\gamma_n,r, \theta) \leq r \cdot \operatorname{Area}(\gamma) - \varepsilon_{n,k}$ for all
    \begin{equation}
        \theta \in I_{n,k} =\left( \delta_k,\frac{r \cdot \operatorname{Area}(\gamma) - \varepsilon_{n,k}}{2r(1-r) \cdot \operatorname{Rad}(\gamma_n)^2} \right) \nonumber
    \end{equation}
    for any $n,k \in \mathbb{N}$. Since $l(\gamma,r, \pi) \neq 0$, we have $\varepsilon_k = \lim_{n \rightarrow \infty} \varepsilon_{n,k}> 0$ and we can invoke Lemma \ref{lemma:shrinkout} to conclude that $\gamma$ inscribes a (non-degenerate) isosceles trapezoid of aspect ratio $r$ and angle $\theta$ for every $\theta \in I_k = \lim_{n \rightarrow \infty} I_{n,k}$. Taking $k \rightarrow \infty$ then gives the desired result.
\end{proof}

To establish subclasses of rectifiable or locally monotone Jordan curves that inscribe isosceles trapezoids of sufficiently small angle, it therefore suffices to show that $l(\gamma,r,\pi) \neq 0$ for all curves in that class. We now restate Theorem \ref{thm:first_theorem}, which gives a class of curves for which we can show $l(\gamma,r,\pi) \neq 0$ and therefore follows as a corollary of Theorem \ref{thm:intro_main_theorem}.

\theoremA*

\begin{proof}
    By Theorem \ref{thm:intro_main_theorem}, it is sufficient to prove that $l(\gamma, r ,\pi) \neq 0$. For $r = 1/2$ this was established in \cite{Greene-Lobb:Floer-homology}. We therefore assume $r \neq 1/2$ and suppose for a contradiction that $l(\gamma, r ,\pi) = 0$.
    
    Since $\gamma$ is locally Lipschitz-graphical, there exist constants $K, \mu_K > 0$ such that $\gamma$ is locally $K$-Lipschitz-graphical of locally $K$-Lipschitz-graphical constant $\mu_K$. Let $\gamma_n$ be the curve $\gamma_\varepsilon$ defined in Section \ref{sec:approximating_curves} for $\varepsilon = 1/n$. We aim to show that for some $\varepsilon > 0$, all inscribed isosceles trapezoids in $\gamma_n$ of angle $\theta > \pi - \varepsilon$ are either elegantly inscribed, almost-elegantly inscribed, or have diagonal length bounded below by some constant $d$ for sufficiently large $n$.
    
    To see that this is sufficient, recall that $l(\gamma_n, r, \theta) = \mathcal{A}_{r, \theta}(T_n^\theta)$ for some inscribed $T_n^\theta$ in $\gamma_n$. Suppose that such $T_n^\theta$ are not uniformly bounded below in diagonal length, then there exists a sequence of inscribed isosceles trapezoids $T_k$ of angle $\theta_k$ in $\gamma_{n_k}$ which shrink to a point as $k \rightarrow \infty$. We may further assume that $\theta_n \rightarrow \pi$ and $n_k \rightarrow \infty$ as otherwise we could simply choose a smaller $\varepsilon$ or larger $N$. By the assumptions above, for sufficiently large $n$ all isosceles trapezoids $T_n^{\theta_n}$ must be elegantly or almost-elegantly inscribed. Following the proof of Lemma \ref{lemma:shrinkout}, allowing for a variable angle $\theta_n$ and using the cappings from Sections \ref{sec:elegant_action} and \ref{sec:almost_elegant_action}, we find that $\mathcal{A}_{r, \theta_n}(T_n^{\theta_n}) \rightarrow r \cdot \operatorname{Area}(\gamma)$ if the $T_n^{\theta_n}$ are elegantly inscribed and $\mathcal{A}_{r, \theta_n}(T_n^{\theta_n}) \rightarrow (1-r) \cdot \operatorname{Area}(\gamma)$ if the $T_n^{\theta_n}$ are almost-elegantly inscribed. This contradicts the assumption that $l(\gamma, r, \pi) = 0$.
    
    We can therefore assume that the inscribed isosceles trapezoids $T_n^\theta$ have diagonal length uniformly bounded below by some constant $d$. By the proof of Proposition \ref{prop:bounded_spectral_variation} we then have
    \begin{equation}
        l(\gamma_n,r,\pi) = \lim_{\theta \rightarrow \pi} l(\gamma_n,r,\theta) \geq \int_{\pi-\varepsilon}^\pi \frac{d}{d\theta}l(\gamma_n,r,\theta) \;d\theta \geq \frac{1}{8}r(1-r)d \cdot \varepsilon > 0, \nonumber
    \end{equation}
    giving a uniform positive lower bound on $l(\gamma_n,r,\pi)$ in $n$. This once again contradicts $l(\gamma, r, \pi) = 0$ and therefore proves the claim.

    It remains to establish that all inscribed isosceles trapezoids of sufficiently large angle are of the claimed form. We note that by Lemma \ref{lemma:lipschitz_approx} we have that $\gamma_n$ is locally $K$-Lipschitz graphical and denoting its locally $K$-Lipschitz graphical constant by $\mu_K^n$ we have $\liminf_{n \rightarrow \infty} \mu_K^n \geq \mu_K$. The Jordan curve $\gamma_n$ can therefore be locally represented as the graph of a Lipschitz function of Lipschitz constant $L$ over a region of length at least $\mu_K^n$ on each side of $\gamma_n(p)$. An elementary geometric calculation shows that if 
    \begin{equation}
        (1-2r) \tan \left( \frac{\theta}{2} \right) > L, \nonumber
    \end{equation}
    any inscribed isosceles trapezoid of angle $\theta$ and aspect ratio $r$ on such a local graph is necessarily elegantly or almost-elegantly inscribed in the wider curve $\gamma_n$. Taking $\varepsilon < 2 \arctan\bigl((1-2r)/L\bigr)$, we see that every inscribed isosceles trapezoid in $\gamma_n$ of angle $\theta > \pi - \varepsilon$ and diagonal length less than $\mu_K^n$ is elegantly or almost-elegantly inscribed, proving the claim.
\end{proof}

\bibliographystyle{amsplain}
\bibliography{bibliography}

\end{document}